\preto\subequations{\ifhmode\unskip\fi}
\numberwithin{equation}{section}
\newtheorem{Theorem}{Theorem}[section]
\newtheorem{Lemma}[Theorem]{Lemma}
\newtheorem{Proposition}[Theorem]{Proposition}
\newtheorem{Corollary}[Theorem]{Corollary}
\newtheorem{Assumption}{H.\!\!}
\theoremstyle{definition}
\newtheorem{Definition}{Definition}[section]
\theoremstyle{remark}
\newtheorem{Remark}{Remark}[section]
 \def\p{\partial} \def\nb{\nonumber}
\def \Vh0{\stackrel{\circ}{V}_h} \def\to{\rightarrow}
     \def\ul{\underline}
\def\Om{\Omega}   
\newcommand{\q}{\quad}
\def\l{\label}  \def\f{\frac}  \def\fa{\forall}
\def\b{\beta}  \def\a{\alpha} 
\def\eps{\varepsilon}
 \def\t{\times}  
\def\ms{\medskip}  
\def\p{\partial}
\def \la{\langle} \def\ra{\rangle}
\def\u{{\bf u}}   
   \def\A{{\bf A}}
\def\cA{\mathcal{A}}
\def\cB{\mathcal{B}}
\def\cC{\mathcal{C}}
\def\cE{\mathcal{E}}
\def\cF{\mathcal{F}}
\def\cL{\mathcal{L}}
\def\cM{\mathcal{M}}
\def\cN{\mathcal{N}}
\def\bA{{\textbf{A}}}
\def\bB{{\textbf{B}}}
\def\N{{\mathbb{N}}}
\def\bP{\mathbb{P}}
\def\Q{{\mathbb{Q}}}
\def\R{{\mathbb R}}
\newcommand{\ex}{\mathbb{E}}
\DeclareMathOperator*{\argmax}{arg\,max}
\DeclareMathOperator*{\argmin}{arg\,min}
\DeclareMathOperator*{\esssup}{ess\,sup}
\newcommand{\lc}
{\mathrel{\raise2pt\hbox{${\mathop<\limits_{\raise1pt\hbox
{\mbox{$\sim$}}}}$}}}
\newcommand{\gc}
{\mathrel{\raise2pt\hbox{${\mathop>\limits_{\raise1pt\hbox{\mbox{$\sim$}}}}$}}}
\newcommand{\ec}
{\mathrel{\raise2pt\hbox{${\mathop=\limits_{\raise1pt\hbox{\mbox{$\sim$}}}}$}}}
\def\bb{\begin{equation}} \def\ee{\end{equation}}
\def\bbn{\begin{equation*}} \def\een{\end{equation*}}
\def\beqn{\begin{eqnarray}}  \def\eqn{\end{eqnarray}}
\def\beqnx{\begin{eqnarray*}} \def\eqnx{\end{eqnarray*}}
\def\bn{\begin{enumerate}} \def\en{\end{enumerate}}
\def\bd{\begin{description}} \def\ed{\end{description}}
\newenvironment{figurehere}
  {\def\@captype{figure}}
  {}
\begin{document}

\title{A neural network based policy iteration algorithm with global $H^2$-superlinear convergence for stochastic games on domains
}
\author{
Kazufumi Ito\thanks{Department of Mathematics, North Carolina State University, Raleigh, NC 27607,
United States of America, {\tt  kito@ncsu.edu}}
\and
Christoph Reisinger\thanks{Mathematical Institute, University of Oxford, United Kingdom ({\tt christoph.reisinger@maths.ox.ac.uk, yufei.zhang@maths.ox.ac.uk})}
\and
Yufei Zhang\footnotemark[2]
}
\date{}

\maketitle


\noindent\textbf{Abstract.} 
In this work, we propose a class of numerical schemes for solving semilinear Hamilton-Jacobi-Bellman-Isaacs (HJBI) boundary value problems which arise naturally from exit time problems of diffusion processes with controlled drift. We exploit  policy iteration to reduce the semilinear problem into a sequence of linear Dirichlet problems, which are subsequently approximated by a multilayer feedforward neural network ansatz. We establish that the numerical solutions converge globally in the $H^2$-norm, and further demonstrate that this convergence is superlinear, by interpreting the algorithm as an inexact Newton iteration for the HJBI equation. Moreover, we construct the optimal feedback controls from the numerical value functions and deduce convergence. The numerical schemes and convergence results are then extended to 
oblique derivative boundary conditions. 
Numerical experiments on the stochastic Zermelo navigation problem are presented to illustrate the theoretical results and to demonstrate the effectiveness of the method.

\medskip
\noindent
\textbf{Key words.} 
Hamilton-Jacobi-Bellman-Isaacs equations,  neural networks, policy iteration, inexact semismooth Newton method, global convergence, $q$-superlinear convergence.

\ms
\noindent
\textbf{AMS subject classifications.} 82C32, 91A15, 65M12


%
%

\medskip
 

\section{Introduction}
In this article, we propose a class of  numerical schemes for solving  Hamilton-Jacobi-Bellman-Isaacs (HJBI) boundary value problems of the following form:
\bb\l{eq:intro}
 -a^{ij}(x)\p_{ij}u+G(x,u,\nabla u)=0, \q\textnormal{in $\Om\subset \R^n$}; \q Bu=g, \q \textnormal{on $\p\Om$,} 
\ee
where $\Om$ is an open bounded domain, $G$ is the (nonconvex) Hamiltonian defined as
\bb\l{eq:G_intro}
G(x,u,\nabla u)=\max_{\a\in \bA}\min_{\b\in\bB}\big(b^i(x,\a,\b)\p_i u(x)+c(x,\a,\b)u(x)-f(x,\a,\b)\big),
\ee
{with given nonempty compact sets $\bA, \bB$},
and  $B$ is a  boundary operator, i.e., if $B$ is the identity operator, \eqref{eq:intro} is an HJBI Dirichlet problem, while if $Bu=\gamma^i \p_iu +\gamma^0  u$ with  some functions $\{\gamma^i\}_{i=0}^n$, \eqref{eq:intro} is an HJBI oblique derivative problem.  Above and hereafter,  when there is no ambiguity,  we shall adopt the summation convention as in \cite{gilbarg1983}, i.e., repeated equal dummy indices indicate summation from $1$ to $n$. 

It is well-known that the value function of zero-sum stochastic differential games in domains satisfies the HJBI equation \eqref{eq:intro}, and the optimal feedback controls can be constructed from  the derivatives of the solutions (see e.g.~\cite{krylov2014} and references within; see also Section \ref{sec:numerical} for a concrete example). In particular, the HJBI Dirichlet problem corresponds to exit time problems of diffusion processes with controlled drift (see e.g.~\cite{krylov2014,buckdahn2016,mohajerin2016}), while the HJBI oblique derivative problem corresponds to 
state constraints (see e.g.~\cite{lions1984,kushner1991}). 
A nonconvex HJBI equation as above also arises from a penalty approximation of hybrid control problems involving continous controls, optimal stopping and impulse controls, where the HJB (quasi-)variational inequality can be reduced to an HJBI equation by penalizing the difference between the value function and the obstacles (see e.g.~\cite{ito2003,witte2012,reisinger2018,reisinger2019}).
As  \eqref{eq:intro}  in general cannot be solved analytically, it is important to construct effective numerical schemes to find the solution of \eqref{eq:intro} and its derivatives.

The standard approach to solving \eqref{eq:intro} is to 
first discretize the operators in \eqref{eq:intro} by finite difference or finite element methods, and then solve the resulting nonlinear discretized equations by using policy iteration, also known as Howard's algorithm, or generally (finite-dimensional) semismooth Newton methods (see e.g.~\cite{forsyth2007,bokanowski2009,smears2014,reisinger2018}). However, this approach has the following  drawbacks, as do most mesh-based methods: (1) it can be difficult to generate meshes and to construct consistent numerical schemes for problems in domains with complicated geometries; (2) the   number of unknowns in general grows exponentially with the dimension $n$, i.e.,  it suffers from Bellman's \emph{curse of dimensionality}, and hence this approach is infeasible for solving high-dimensional control problems. Moreover, since  policy iteration is applied to a fixed finite-dimensional equation resulting from a particular discretization, it is difficult to infer whether the same convergence rate of policy iteration remains valid as the mesh size tends to zero (\cite{santos2004,bokanowski2009}).
We further remark that,
for a given discrete HJBI equation,
it can be difficult to determine a good initialization of policy iteration
to ensure fast convergence of the algorithm;
see \cite{alla2015} and references therein on possible  accelerated methods.
\color{black}

Recently, numerical methods based on deep neural networks have been designed to solve high-dimensional partial differential equations (PDEs) (see e.g.~\cite{lagaris1998,e2017,berg2018,e2018,hure2018,sirignano2018}). Most of these methods reformulate \eqref{eq:intro} into a nonlinear least-squares problem:
\bb\l{eq:direct}
\inf_{u\in \cF}\| -a^{ij}\p_{ij}u+G(\cdot,u,\nabla u)\|^2_{L^2(\Om)}+\| Bu-g\|^2_{L^2(\p\Om)},
\ee
where $\cF$ is a collection of neural networks with a smooth activation function. Based on  collocation points chosen randomly from the domain,   \eqref{eq:direct} is then reduced into an  empirical risk minimization problem, which is subsequently solved by using stochastic optimization algorithms, in particular the Stochastic Gradient Descent (SGD) algorithm or its variants.
Since these methods avoid mesh generation, they can be adapted to solve PDEs in high-dimensional  domains with complex geometries. Moreover, the choice of smooth activation functions leads to smooth numerical solutions, whose values  can be evaluated everywhere without interpolations. In the following, we shall  refer to these methods as the Direct Method, due to the fact that there is no policy iteration involved. 
 
We observe, however, that the Direct Method also has several serious drawbacks, especially for solving nonlinear  nonsmooth equations including \eqref{eq:intro}. 
Firstly, the  nonconvexity of both the deep neural networks and the Hamiltonian $G$  leads to a  nonconvex empirical minimization  problem, for which there is no theoretical guarantee on the convergence of SGD to a minimizer (see e.g.~\cite{shamir2013}). In practice, training a network with a desired accuracy could take  hours or days (with  hundreds of thousands of  iterations) due to the slow convergence of SGD. Secondly, each SGD iteration requires the evaluation of  $\nabla G$ (with respect to $u$ and $\nabla u$) on sample points,
but $\nabla G$ is not necessarily defined everywhere due to the nonsmoothness of $G$.
Moreover, evaluating the function $G$ (again on a large set of sample points) can be expensive, especially when the sets $\bA$ and $\bB$ are of high dimensions,
as we do not require more regularity than continuity of the coefficients with respect to the controls, so that approximate optimization may only be achieved by exhaustive search over a discrete coverage of the compact control set.
Finally, as we shall see in Remark \ref{rmk:H3/2}, merely including an $L^2(\p\Om)$-norm of the boundary data in the loss function \eqref{eq:direct} does not generally lead to convergence of  the derivatives of numerical solutions or the corresponding feedback control laws.


In this work, we  propose an efficient neural network based policy iteration algorithm for solving \eqref{eq:intro}.
At  the $(k+1)$th iteration, $k\ge 0$, we shall update the control laws $(\a^k,\b^k)$ by performing pointwise maximization/minimization of the Hamiltonian $G$ based on the previous iterate $u^k$,  and obtain the next iterate $u^{k+1}$ by solving a linear boundary value problem, whose coefficients involve the  control laws  $(\a^k,\b^k)$.
This reduces the (nonconvex) semilinear problem into a sequence of \textit{linear} boundary value problems, which are subsequently approximated by a multilayer  neural network ansatz.  
Note that compared to Algorithm Ho-3 in  \cite{bokanowski2009} for discrete HJBI equations, which  requires  to  solve  a nonlinear HJB subproblem (involving minimization over the set $\bB$) for each iteration, our algorithm only requires to solve a linear subproblem for each iteration, 
hence it is in general   more efficient, especially when the dimension of $\bB$ is high.

Policy iteration (or Successive Galerkin Approximation) was employed  in \cite{beard1997, beard1998, kalise2018,kerimkulov2018} to solve \textit{convex HJB equations} on the whole space $\R^n$. Specifically, \cite{beard1997, beard1998, kalise2018} approximate the solution to each linear equation via a separable polynomial ansatz (without concluding any convergence rate), while \cite{kerimkulov2018} assumes each linear equation is solved sufficiently accurately (without specifying a numerical method), and deduces pointwise \textit{linear} convergence. 
The continuous policy iteration in \cite{kalise2018}
has also been applied to solve HJBI equations on $\R^n$ in \cite{kalise2019},
which 
is a direct  extension of Algorithm Ho-3 in  \cite{bokanowski2009}
and
still requires to solve a nonlinear  HJB subproblem at each iteration.
\color{black}
In this paper, we propose an easily implementable accuracy criterion for the numerical solutions  of the \textit{linear} PDEs which ensures the numerical solutions  converge  superlinearly in a suitable function space for nonconvex HJBI equations from an arbitrary initial guess.

Our algorithm enjoys the main advantage of the Direct Method, i.e., it is a mesh-free method and can be applied to solve high-dimensional stochastic games. Moreover, by utilizing the superlinear convergence of policy iteration, our algorithm effectively reduces the number of pointwise maximization/minimization over the  sets $\bA$ and $\bB$, and significantly reduces the computational cost of the Direct Method, especially for high dimensional control sets. The superlinear convergence of policy iteration also helps  eliminate the oscillation caused by SGD, which leads to smoother and more rapidly decaying loss curves in both the training and validation processes (see Figure \ref{fig:hjbi}). Our algorithm further allows training of the feedback controls on a separate network architecture from that representing the value function, or adaptively adjusting the architecture of networks for each policy iteration.

A major theoretical contribution of this work is the proof of global superlinear convergence of the 
policy iteration algorithm  for the HJBI equation \eqref{eq:intro} in $H^2(\Om)$, which is novel even for  HJB equations (i.e., one of the sets $\bA$ and $\bB$ is singleton). Although the (local) superlinear convergence of policy iteration for discrete equations has been proved in various works (e.g.~\cite{puterman1979,santos2004,forsyth2007,bokanowski2009,witte2012,smears2014,reisinger2018}),  to the best of our knowledge, there is no published work on the superlinear convergence of policy iteration for HJB PDEs in a function space, nor on the  global convergence of policy iteration for solving nonconvex HJBI equations. 

Moreover, this is the first paper which demonstrates the  convergence of  neural network based  methods  for the solutions and their (first and second order) derivatives of nonlinear PDEs with merely measurable coefficients (cf.~\cite{han2016, han2018,hure2018,sirignano2018}).
We will also prove the pointwise  convergence of the numerical solutions and their derivatives, which subsequently enables us to  construct the optimal feedback controls from the numerical value functions and deduce convergence.

Let us briefly comment on the  main difficulties encountered in studying  the convergence of policy iteration for HJBI equations.  
Recall that at  the $(k+1)$th iteration, we 
need to solve a linear boundary value problem, whose coefficients involve the  control laws  $(\a^k,\b^k)$,
obtained by performing pointwise maximization/minimization of the Hamiltonian $G$.
The uncountability of the state space $\Om$ and the nonconvexity of the Hamiltonian require us to exploit several technical measurable selection arguments to ensure the measurability of the controls  $(\a^k,\b^k)$, which is essential for the well-definedness  of the linear boundary value problems and the algorithm.

Moreover,  the nonconvexity of the Hamiltonian prevents us from following the  arguments in \cite{santos2004,forsyth2007,bokanowski2009}  for discrete HJB equations to establish  the \emph{global} convergence of our inexact policy iteration algorithm for HJBI equations.
In fact,  a crucial step in the arguments for discrete HJB equations is to use the discrete maximum principle and show 
the iterates generated by policy iteration converge monotonically with an arbitrary initial guess, which subsequently implies the global convergence of the iterates. However, this monotone convergence is in general false for the iterates generated by the inexact policy iteration algorithm, due to the nonconvexity of the Hamiltonian and the fact that each linear equation is only solved approximately. We shall present a novel analysis technique for 
establishing the  global convergence of our inexact policy iteration algorithm, by 
{interpreting it as a fixed point iteration in $H^2(\Om)$}.

Finally, we remark that the proof of \emph{superlinear} convergence of our algorithm is significantly different from the arguments for discrete equations. Instead of working with the sup-norm for (finite-dimensional) discrete equations as in  \cite{santos2004,forsyth2007,bokanowski2009,witte2012,reisinger2018}, we employ a two-norm framework to establish the generalized differentiability of HJBI operators, where the norm gap is essential as has already been pointed out in \cite{hintermuller2002,ulbrich2011,smears2014}.  Moreover, by taking advantage of the  fact that  the Hamiltonian only involves   low order terms, we further demonstrate that the inverse of the generalized derivative is uniformly bounded. Furthermore, we include a suitable  fractional Sobolev norm of the boundary data in the loss functions used in the training process, which is crucial for the $H^2(\Om)$-superlinear convergence of the neural network based policy iteration algorithm.

We organize this paper as follows. Section \ref{sec:D} states the main assumptions and recalls basic results for HJBI Dirichlet problems. In Section \ref{sec:pi_D} we  propose a policy iteration scheme for HJBI Dirichlet problems and establish its global superlinear convergence. Then in Section \ref{sec:dpi_D}, we shall introduce the neural network based policy iteration algorithm, establish its various convergence properties, and construct convergent approximations to optimal feedback controls. We extend the algorithm and convergence results to HJBI oblique derivative problems in Section \ref{sec:oblique}. 
Numerical examples for two-dimensional stochastic Zermelo navigation problems are presented in Section \ref{sec:numerical} to confirm the theoretical findings and to illustrate the effectiveness of our algorithms. 
The Appendix  collects some basic results which are used in this article, and gives a proof for the main result on the HJBI oblique derivative problem.

\section{HJBI Dirichlet problems}\l{sec:D}
In this section, we introduce the HJBI Dirichlet boundary value problems of our interest,   recall the appropriate notion of solutions, and state the main assumptions on its coefficients.  We start with several important  spaces  used frequently throughout this work. 

Let $n\in \N$ and $\Om$ be a  bounded $C^{1,1}$ domain in $\R^n$, i.e., a bounded open connected subset of $\R^n$  with a $C^{1,1}$ boundary. For each  integer $k\ge 0$ and real $p$ with $1\le p<\infty$,  we denote by $W^{k,p}(\Om)$  the  standard Sobolev space of real functions with their weak derivatives of order up to $k$ in the Lebesgue space $L^p(\Om)$. When $p=2$, we use $H^k(\Om)$ to denote $W^{k,2}(\Om)$. We further  denote by $H^{1/2}(\p\Om)$ and $H^{3/2}(\p\Om)$ the spaces of traces from $H^1(\Om)$ and  $H^{2}(\Om)$, respectively  (see  \cite[Proposition 1.1.17]{grisvard1985}),  
which can   be equivalently defined by using the surface measure  $\sigma$ on the boundaries  $\p\Om$ as follows (see e.g.~\cite{garroni2002}):
\begin{align}
  \|g\|_{H^{1/2}(\p\Om)}
 &=
 \textstyle
 \big[\int_{\p\Om}
 |g|^2\,d\sigma
+\iint_{\p\Om \t \p\Om}\f{|g(x)-g(y)|^2}{|x-y|^{n}}\,d\sigma(x)d\sigma(y)
\big]^{1/2},
\l{eq:H1/23/2}
\\
  \|g\|_{H^{3/2}(\p\Om)}
 &=
 \textstyle
\big[ \int_{\p\Om}  
 \big(|g|^2+\sum_{i=1}^n|\p_i g|^2\big)\,d\sigma
 +\sum_{i=1}^n\iint_{\p\Om \t \p\Om}\f{|\p_i g(x)-\p_ig(y)|^2}{|x-y|^{n}}\,d\sigma(x)d\sigma(y)
 \big]^{1/2}
 \nb.
\end{align}

We shall consider the following HJBI equation with  nonhomogeneous Dirichlet boundary data:
\begin{subequations}\l{eq:D}
\begin{align}
F(u)&\coloneqq -a^{ij}(x)\p_{ij}u+G(x,u,\nabla u)=0, \q\textnormal{a.e.~$\Om$}, \l{eq:hjb}\\
\tau u&=g, \q \textnormal{on $\p\Om$.} \l{eq:bc}
\end{align}
\end{subequations}
where  the nonlinear Hamiltonian is given as in \eqref{eq:intro}:
\bb\l{eq:G}
G(x,u,\nabla u)=\max_{\a\in \bA}\min_{\b\in\bB}\big(b^i(x,\a,\b)\p_i u(x)+c(x,\a,\b)u(x)-f(x,\a,\b)\big).
\ee
Throughout this paper, we shall focus on the strong solution to \eqref{eq:D}, i.e., a twice weakly differentiable function $u\in H^2(\Om)$ satisfying the HJBI equation \eqref{eq:hjb} almost everywhere in $\Om$, and the  boundary values on $\p \Om$ will be 
interpreted as traces of the corresponding Sobolev space. For instance, $\tau u = g$ 
 on $\p\Om$ in \eqref{eq:bc} means that the trace of $u$ is equal to $g$ in $H^{3/2}(\p\Om)$, where $\tau\in \cL(H^2(\Om), H^{3/2}(\p\Om))$ denotes the trace operator  (see \cite[Proposition 1.1.17]{garroni2002}). See Section \ref{sec:oblique} for boundary conditions involving the derivatives of solutions.


We now list the main assumptions on the coefficients of \eqref{eq:D}.
\begin{Assumption}\l{assum:D}
Let $n\in \N$,  $\Om\subset \R^n$ be a  bounded $C^{1,1}$ domain, 
  $\bA$ be a nonempty finite set, and $\bB$ be a nonempty compact  metric space.
Let $g\in H^{3/2}(\p\Om)$,  $\{a^{ij}\}_{i,j=1}^n\subseteq  {C(\bar{\Om})}$ satisfy the following ellipticity condition with a constant  $\lambda>0$:
\begin{align*}
&\sum_{i,j=1}^na^{ij}(x)\xi_i\xi_j\ge \lambda\sum_{i=1}^n \xi_i^2, \q    \textnormal{for all $\xi\in \R^n$ and $x\in {\Om}$},
\end{align*}
and $\{b^i\}_{i=1}^n,c,f\in L^\infty({\Om}\t \bA\t \bB)$ satisfy that $c\ge 0$ on ${\Om}\t \bA\t \bB$, and  that $\phi(x,\a,\cdot):\bB\to \R$ is continuous, for all $\phi=b^i,c,f$ and $(x,\a)\in \Om\t \bA$.

\end{Assumption}

As we shall see in Theorem \ref{thm:semismooth_F}
and Corollary \ref{cor:semismooth_F}, the finiteness of the set $\bA$ enables us to establish the semismoothness of the HJBI operator \eqref{eq:hjb}, whose coefficients involve a general nonlinear dependence on the parameters $\a$ and $\b$. If all coefficients of \eqref{eq:hjb} are in a separable form, i.e., it holds for all $\phi=b^i,c,f$ that $\phi(x,\a,\b)=\phi_1(x,\a)+\phi_2(x,\b)$ for some functions $\phi_1,\phi_2$ (e.g.~the penalized equation for variational inequalities with bilateral obstacles in \cite{ito2003}), then we can relax the finiteness of $\bA$ to the same conditions on $\bB$.

Finally,  in this work we focus on boundary value problems in a $C^{1,1}$ domain   to simplify the presentation, but  the numerical schemes and  their convergence analysis can be extended to problems in nonsmooth convex domains with sufficiently regular coefficients (see e.g.~\cite{grisvard1985,smears2014}).


%

We end this section by proving   the uniqueness of solutions to the Dirichlet problem  \eqref{eq:D} in $H^2(\Om)$. The existence of strong solutions shall be established constructively via policy iteration below (see Theorem \ref{thm:pi_global}). 
\begin{Proposition}\l{prop:wp_d}
Suppose (H.\ref{assum:D}) holds. Then the Dirichlet problem  \eqref{eq:D} admits at most one strong solution $u^*\in H^2(\Om)$. 
\end{Proposition}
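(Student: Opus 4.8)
\emph{Proof proposal.} The plan is to derive a linear differential inequality for the difference of two strong solutions and then invoke the maximum principle for linear non-divergence form operators with bounded measurable coefficients. Suppose $u_1,u_2\in H^2(\Om)$ both solve \eqref{eq:D} and put $w\coloneqq u_1-u_2\in H^2(\Om)$; since $u_1,u_2$ have the same Dirichlet datum $g$, continuity of the trace operator $\tau$ gives $\tau w=0$, i.e.\ $w\in H^2(\Om)\cap H^1_0(\Om)$.

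First I would linearise the max--min Hamiltonian along $w$. Writing $L^{\a,\b}[v](x)\coloneqq b^i(x,\a,\b)\p_iv(x)+c(x,\a,\b)v(x)-f(x,\a,\b)$, so that $G(x,v,\nabla v)=\max_{\a\in\bA}\min_{\b\in\bB}L^{\a,\b}[v](x)$, I would pick, for a.e.\ $x\in\Om$, an outer maximiser $\hat\a(x)\in\argmax_{\a\in\bA}\min_{\b\in\bB}L^{\a,\b}[u_2](x)$ and then an inner minimiser $\hat\b(x)\in\argmin_{\b\in\bB}L^{\hat\a(x),\b}[u_1](x)$. Because $\hat\a(x)$ is optimal for $u_2$ and $\hat\b(x)$ is optimal for $u_1$, one has $G(x,u_2,\nabla u_2)=\min_\b L^{\hat\a(x),\b}[u_2](x)\le L^{\hat\a(x),\hat\b(x)}[u_2](x)$ and $G(x,u_1,\nabla u_1)\ge L^{\hat\a(x),\hat\b(x)}[u_1](x)$, whence, a.e.\ in $\Om$,
\[
G(\cdot,u_2,\nabla u_2)-G(\cdot,u_1,\nabla u_1)\ \le\ L^{\hat\a,\hat\b}[u_2]-L^{\hat\a,\hat\b}[u_1]\ =\ -\bar b^i\p_iw-\bar c\,w,
\]
where $\bar b^i\coloneqq b^i(\cdot,\hat\a(\cdot),\hat\b(\cdot))$ and $\bar c\coloneqq c(\cdot,\hat\a(\cdot),\hat\b(\cdot))$. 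The finiteness of $\bA$, the compactness of $\bB$, and the continuity of $b^i,c,f$ in $\b$ allow $\hat\a,\hat\b$ to be chosen Borel measurable (a measurable selection argument, as used throughout Section~\ref{sec:pi_D}), so that $\bar b\in L^\infty(\Om;\R^n)$ and $0\le\bar c\in L^\infty(\Om)$.

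Next, subtracting $F(u_1)=0$ from $F(u_2)=0$ gives $a^{ij}\p_{ij}w=G(\cdot,u_1,\nabla u_1)-G(\cdot,u_2,\nabla u_2)$ a.e., so the displayed bound yields the linear differential inequality $-a^{ij}\p_{ij}w+\bar b^i\p_iw+\bar c\,w\le0$ a.e.\ in $\Om$, with $w=0$ on $\p\Om$ and $\bar c\ge0$. Since $\{a^{ij}\}$ is only continuous (which rules out divergence-form/energy arguments), I would conclude $w\le0$ from the Aleksandrov--Bakelman--Pucci maximum principle for strong subsolutions, e.g.\ \cite[Theorem~9.1]{gilbarg1983}; to satisfy its integrability hypothesis when $n\ge3$, where $H^2(\Om)\not\subseteq W^{2,n}(\Om)$, I would first observe that $w$ solves the \emph{linear} Dirichlet problem $-a^{ij}\p_{ij}w=-(\bar b^i\p_iw+\bar c\,w)$ with zero boundary data on the $C^{1,1}$ domain $\Om$, and bootstrap via the global $W^{2,p}$-estimates and Sobolev embedding to get $w\in W^{2,p}(\Om)$ for every $p<\infty$. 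The maximum principle then gives $\esssup_\Om w\le\esssup_{\p\Om}w^+=0$, i.e.\ $u_1\le u_2$; interchanging $u_1$ and $u_2$ gives the reverse inequality, hence $u_1=u_2$.

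The main obstacle is the linearisation step: the nonconvexity of the max--min Hamiltonian and the fact that its coefficients are merely measurable in $x$ and only continuous in the controls force one to produce the coefficients $\bar b,\bar c$ by a measurable selection of pointwise optimisers; everything after the resulting differential inequality is classical.
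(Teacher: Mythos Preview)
Your approach is correct (modulo one slip noted below) and genuinely different from the paper's. The paper linearises by \emph{difference quotients}: it defines $\tilde b^i(x)$ and $\tilde c(x)$ as pointwise difference quotients of $G$ in the $\p_i u$- and $u$-slots, so that $w=u_1-u_2$ satisfies the exact linear \emph{equation} $-a^{ij}\p_{ij}w+\tilde b^i\p_iw+\tilde c\,w=0$ with $\tilde c\ge 0$, and then simply invokes the $H^2$ well-posedness of linear Dirichlet problems (Theorem~\ref{thm:D_regularity}) to conclude $w=0$ directly. You instead freeze coefficients at measurable optimisers to obtain a one-sided \emph{inequality} and close via the ABP maximum principle plus symmetry. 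The paper's route is shorter and stays entirely in $H^2$, sidestepping both ABP and the $W^{2,p}$ bootstrap; your comparison-principle route is more classical and would transfer more readily to settings (e.g.\ viscosity solutions) where a linear $H^2$ well-posedness theorem of the right strength is not already available.

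One slip in your bootstrap: you write that $w$ ``solves the linear Dirichlet problem $-a^{ij}\p_{ij}w=-(\bar b^i\p_iw+\bar c\,w)$'', but with your frozen coefficients $\bar b,\bar c$ this is only an inequality, not an equation, so it cannot be used as stated for $W^{2,p}$ regularity. The bootstrap is easily repaired: use the genuine identity $-a^{ij}\p_{ij}w=G(\cdot,u_2,\nabla u_2)-G(\cdot,u_1,\nabla u_1)$ together with the Lipschitz bound $|G(\cdot,u_2,\nabla u_2)-G(\cdot,u_1,\nabla u_1)|\le C(|w|+|\nabla w|)$ and Sobolev embedding to push the right-hand side successively into $L^p$; alternatively, introduce the paper's difference-quotient coefficients $\tilde b^i,\tilde c$, for which $w$ satisfies an exact linear equation and $W^{2,p}$ regularity applies directly.
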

\begin{proof}
Let $u,v\in H^2(\Om)$ be two strong solutions to  \eqref{eq:D},  we  consider the following linear homogeneous Dirichlet problem:
\bb\l{eq:d_u-v}
-a^{ij}(x)\p_{ij}w+\tilde{b}^{i}(x)\p_i w+\tilde{c}(x)w=0, \q \textnormal{a.e. in $\Om$}; \q \tau w=0, \q \textnormal{on $\p\Om$},
\ee
where we define the following measurable functions: for each $i=1,\ldots, n$,
\begin{align*}
\tilde{b}^i(x)&=\begin{cases}
\f{G\big(x,v,((\p_jv)_{1\le j<i},\p_i u,(\p_ju)_{i< j\le n})\big)-G\big(x,v,((\p_jv)_{1\le j<i},\p_i v,(\p_ju)_{i< j\le n})\big)}{(\p_i u-\p_i v)(x)}, &\textnormal{on $\{x\in \Om\mid \p_i(u-v)(x)\not =0\}$,}\\
0,&\textnormal{otherwise,}
\end{cases}\\
\tilde{c}(x)&=\begin{cases}
\f{G(x,u,\nabla u)-G(x,v,\nabla u)}{(u-v)(x)}, & \textnormal{on $\{x\in \Om\mid (u-v)(x)\not =0\}$,}\\
0, &\textnormal{otherwise,}
\end{cases}
\end{align*}
with the Hamiltonian $G$ defined as in \eqref{eq:G}. Note that the boundedness of coefficients implies that 
$\{\tilde{b}^i\}_{i=1}^n\subseteq L^\infty(\Om)$, and $\tilde{c}\in L^\infty(\Om)$. Moreover, one can directly verify that the following inequality holds  for all parametrized functions $(f^{\a,\b},g^{\a,\b})_{\a\in \bA,\b\in \bB}$: for all $ x\in \R^n$,
$$\inf_{(\a,\b)\in \bA\t \bB}f^{\a,\b}(x)-g^{\a,\b}(x)\le \inf_{\a\in \bA}\sup_{\b\in \bB} f^{\a,\b}(x)-\inf_{\a\in \bA}\sup_{\b\in \bB} g^{\a,\b}(x)\le \sup_{(\a,\b)\in \bA\t \bB}f^{\a,\b}(x)-g^{\a,\b}(x),$$
which together with   (H.\ref{assum:D}) leads to the estimate  that $\tilde{c}(x)\ge \inf_{(\a,\b)\in \bA\t\bB}c(x,\a,\b)\ge 0$ 
on the set $\{x\in \Om\mid (u-v)(x)\not =0\}$, and hence we have $\tilde{c}\ge 0$ a.e.~$\Om$. Then, we can deduce from  Theorem \ref{thm:D_regularity}  that the Dirichlet problem \eqref{eq:d_u-v} admits a unique strong solution $w^*\in H^2(\Om)$ and $w^*=0$. Since $w=u-v\in H^2(\Om)$ satisfies \eqref{eq:d_u-v} a.e.~in $\Om$ and $\tau w=0$, we see that $w=u-v$ is a strong solution to \eqref{eq:d_u-v} and hence $u-v=w^*=0$, which subsequently implies the uniqueness of strong solutions to the Dirichlet problem  \eqref{eq:D}.
\end{proof}

\section{Policy iteration for HJBI Dirichlet problems}\l{sec:pi_D}

In this section, we  propose a policy iteration algorithm for solving the Dirichlet problem \eqref{eq:D}. We shall also establish the global superlinear convergence of the algorithm, which subsequently gives a constructive proof for the existence of a strong solution to the Dirichlet problem \eqref{eq:D}.

We start by presenting the  policy iteration scheme for the HJBI equations in Algorithm \ref{alg:pi}, which extends the policy iteration algorithm (or Howard's algorithm) for discrete HJB equations (see e.g.~\cite{forsyth2007,bokanowski2009,reisinger2018}) to the continuous setting. 


%
%
%

\begin{algorithm}[!h]
\caption{Policy iteration algorithm for Dirichlet problems.}
\label{alg:pi}
\vspace{-3mm}
\bn 
\item Choose an initial guess $u^0$ in $H^2(\Om)$, and set $k=0$.
\item Given the  iterate  $u^{k}\in H^2(\Om)$, update the following control laws: for all $ \a\in \bA,x\in \Om $,
\begin{align}
\a^{k}(x)&\in \arg \max_{\a\in \A}\bigg[\min_{\b\in \bB}\big(b^i(x,\a,\b)\p_i u^k(x)+c(x,\a,\b)u^k(x)-f(x,\a,\b)\big)\bigg],\l{eq:a_k}\\
\b^{k}(x) &\in \arg\min_{\b\in \bB}\big(b^i(x,\a^k(x),\b)\p_i u^k(x)+c(x,\a^k(x),\b)u^k(x)-f(x,\a^k(x),\b)\big). \l{eq:b_k}
\end{align}
\item Solve the \emph{linear} Dirichlet problem for $u^{k+1}\in H^2(\Om)$: 
\bb\l{eq:linear}
-a^{ij}\p_{ij}u+b^{i}_{k}\p_i u+c_{k}u-f_{k}=0, \q \textnormal{in $\Om$}; \q \tau u=g, \q \textnormal{on $\p\Om$},
\ee
where $\phi_{k}(x)\coloneqq \phi(x,\a^{k}(x),\b^{k}(x))$ for $\phi=b^i,c,f$.
\item If $\|u^{k+1}-u^k\|_{H^2(\Om)}=0$, then terminate with outputs $u^{k+1},\a^k$ and $\b^k$, otherwise increment $k$ by one and go to step 2.
\en
\vspace{-3mm}
\end{algorithm}

The remaining part of this section is devoted to the  convergence analysis of Algorithm \ref{alg:pi}. For notational simplicity,
we first introduce  two auxiliary  functions: 
for each $(x, \u,\a,\b) \in \Om\t \R^{n+1}\t \bA\t \bB$ with $\u=(z,p)\in \R\t \R^{n}$, we shall define the following functions
\begin{align}
\ell(x,\u,\a,\b)&\coloneqq b^i(x,\a,\b)p_i+c(x,\a,\b)z-f(x,\a,\b), \l{eq:l}\\
 h(x,\u,\a)&\coloneqq \min_{\b\in \bB} \ell(x,\u,\a,\b). \l{eq:h}
\end{align}
Note that for all $k\ge 0$ and $x\in\Om$, by setting $\u^k(x)=(u^k(x),\nabla u^k(x))$, we can see from \eqref{eq:a_k} and \eqref{eq:b_k} that
\begin{align}\l{eq:maxmin}
\ell(x,\u^k(x),\a^k(x),\b^k(x))
=
\min_{\b\in \bB}\ell(x,\u^k(x),\a^k(x),\b)
=\max_{\a\in \bA}\min_{\b\in \bB}\ell(x,\u^k(x),\a,\b).
\end{align}

We then recall several important concepts, which play a pivotal role  in our subsequent  analysis.
 The first  concept ensures the existence of measurable feedback controls and the well-posedness of Algorithm \ref{alg:pi}.
\begin{Definition}
Let $(S, \Sigma)$ be a measurable space, and let $X$ and $Y$ be topological spaces. A function $\psi : S \t X \to Y$ is a Carath\'{e}odory function if:
\bn
\item for each $x\in X$, the function $\psi_x = \psi(\cdot, x): S \to Y$ is $(\Sigma, \cB_Y )$-measurable,
where $\cB_Y$ is the Borel $\sigma$-algebra of the topological space $Y$; and
\item for each $s\in S$, the function $\psi_s = \psi(s,\cdot): X \to Y$ is continuous.
\en
\end{Definition}
\begin{Remark}\l{rmk:cara}
It is well-known that if $X,Y$ are two complete separable metric spaces, and $\psi: S \t X \to Y$ is a Carath\'{e}odory function, then for any given measurable function $f:S\to  X$, the composition function $s\to \psi(s,f(s))$ is measurable (see e.g.~\cite[Lemma 8.2.3]{aubin1990}). 
Since any compact metric space is complete and separable,
it is clear that (H.\ref{assum:D}) implies that the coefficients $b^i,c,f$ are Carath\'{e}odory functions (with $S=\Om$ and $X=\bA\t\bB$). Moreover, one can easily check that  both $\ell$ and $h$ are Carath\'{e}odory functions, i.e., $\ell$ (resp.~$h$) is  continuous  in $(\u,\a,\b)$ (resp.~$(\u,\a)$) and measurable in $x$ (see Theorem \ref{thm:measurable_selection} for the measurability of $h$ in $x$).
\end{Remark}

We now  recall a generalized differentiability concept for nonsmooth operators between Banach spaces, which is referred as  semismoothness  in \cite{ulbrich2011} and  slant differentiability in   \cite{chen2000,hintermuller2002}. It is well-known (see e.g.~\cite{bokanowski2009,smears2014,reisinger2018}) that 
 the HJBI operator in \eqref{eq:hjb}  is in general non-Fr\'{e}chet-differentiable, and 
 this generalized differentiability is essential for showing the superlinear convergence of policy iteration applied to HJBI equations.
\begin{Definition}
Let $F:V\subset Y\mapsto Z$ be defined on a open subset $V$ of the Banach space $Y$ with images in the Banach space $Z$. In addition, let $\p^*F: V \rightrightarrows \cL(Y ,Z)$ be a given a set-valued mapping with nonempty images, i.e., $\p^* F(y)\not =\emptyset$ for all $y \in V$. We say $F$ is $\p^*F$-semismooth in $V$ if for any given $y\in V$, we have that $F$ is continuous near $y$, and
$$
\sup_{M\in\p^* F(y+s)}\|F(y+s)-F(y)-Ms\|_Z=o(\|s\|_Y), \q \textnormal{as $\|s\|_Y\to 0$.}
$$ 
The set-valued mapping $\p^*F$  is called a generalized differential of $F$ in $V$.
\end{Definition}
\begin{Remark}
As in \cite{ulbrich2011}, we always require that $\p^*F$ has a nonempty image, and hence the $\p^*F$-semismooth of $F$ in $V$ shall automatically imply that the image of  $\p^*F$ is nonempty on $V$.
\end{Remark}

Now we are ready to  analyze Algorithm \ref{alg:pi}.
We  first prove the  semismoothness of the  Hamiltonian $G$ defined as in \eqref{eq:G},  by viewing it as the composition of  a pointwise maximum operator and a  family of HJB operators parameterized by the control $\a$. Moreover, we shall simultaneously establish   that, for each iteration, one can select  measurable control  laws $\a^k,\b^k$ to ensure the measurability of the controlled coefficients $b^i_k,c_k,f_k$ in  the linear problem  \eqref{eq:linear}, which is essential for the well-posedness of strong solutions to \eqref{eq:linear}, and the well-definedness of Algorithm \ref{alg:pi}.

The following proposition establishes the semismoothness of a parameterized family of first-order HJB operators, which extends the result for scalar-valued HJB operators in \cite{smears2014}. Moreover, by taking advantage of the  fact that  the operators involve  only first-order derivatives, we are able to establish that they are  semismooth from $H^2(\Om)$ to  $L^p(\Om)$ for some $p>2$ (cf.~\cite[Theorem 13]{smears2014}), which is essential for  the superlinear convergence of Algorithm \ref{alg:pi}.

\begin{Proposition}\l{prop:F1}
Suppose (H.\ref{assum:D}) holds. Let $p$ be a given constant satisfying  $p\ge 1$ if $n\le 2$ and $p\in [1,\f{2n}{n-2})$ if $n>2$, and let 
$F_1:H^2(\Om)\to (L^p(\Om))^{|\bA|}$ be the HJB operator defined by
$$
F_1(u)\coloneqq \bigg(\min_{\b\in\bB}\big(b^i(x,\a,\b)\p_i u+c(x,\a,\b)u-f(x,\a,\b)\big)\bigg)_{\a\in \bA}, \q \fa u\in H^2(\Om).
$$
Then $F_1$  is Lipschitz continuous and $\p^* F_1$-semismooth  in $H^2(\Om)$ with a generalized differential
\begin{align*}
\p^* F_1:H^2(\Om)&\rightarrow \cL(H^2(\Om), (L^p(\Om))^{|\bA|})
\end{align*}
defined  as follows: for any $u\in H^2(\Om)$,  we have
\bb\l{eq:p_F1}
\p^*F_1(u)\coloneqq \bigg(b^{i}(\cdot,\a,\b^{u}(\cdot,\a))\p_i +c(\cdot,\a,\b^{u}(\cdot,\a))\bigg)_{\a\in \bA},
\ee
where  $\b^{u}:\Om\t \bA\to \bB$ is any jointly measurable function such that for all $\a\in \bA$ and $x\in \Om$,
\bb\l{eq:b^u}
\b^u(x,\a)\in  \arg\min_{\b\in \bB}\left(b^i(x,\a,\b)\p_i u(x)+c(x,\a,\b)u(x)-f(x,\a,\b)\right).
\ee
\end{Proposition}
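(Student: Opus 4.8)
The plan is to use the finiteness of $\bA$ to reduce everything to a single index $\a\in\bA$ and the scalar first-order HJB operator $G_\a\colon H^2(\Om)\to L^p(\Om)$, $G_\a(u)(x)\coloneqq h(x,(u(x),\nabla u(x)),\a)$ with $h$ as in \eqref{eq:h}, whose candidate generalized derivative at $u$ is the map $v\mapsto b^i(\cdot,\a,\b^u(\cdot,\a))\,\p_i v+c(\cdot,\a,\b^u(\cdot,\a))\,v$, so that $F_1=(G_\a)_{\a\in\bA}$ and $\p^*F_1$ is the product of these. I would first dispose of the routine points. Measurability of $x\mapsto G_\a(u)(x)$ and the existence of a jointly measurable minimizer selection $\b^u$ as in \eqref{eq:b^u} follow from the Carath\'eodory property of $\ell$ (Remark~\ref{rmk:cara}) and Theorem~\ref{thm:measurable_selection}, so $\p^*F_1(u)$ is a nonempty set of operators. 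Boundedness of each such operator and Lipschitz continuity of $F_1$ both follow from the pointwise estimate $|h(x,(u,\nabla u),\a)-h(x,(v,\nabla v),\a)|\le \|b\|_{L^\infty}|\nabla(u-v)(x)|+\|c\|_{L^\infty}|(u-v)(x)|$ (the minimum over $\bB$ being $1$-Lipschitz in the sup-norm, $\ell$ being affine in $(u,\nabla u)$) combined with the Sobolev embedding $H^2(\Om)\hookrightarrow W^{1,p}(\Om)$, which holds precisely for the stated range of $p$ and is what makes $L^p(\Om)$ an admissible target.

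The core is the semismoothness estimate. Fix $u\in H^2(\Om)$, let $s\in H^2(\Om)$, let $\b^{u+s}$ be any measurable minimizer selection for $u+s$, and write $E_s(x)$ for the $\a$-component of $(F_1(u+s)-F_1(u)-Ms)(x)$ with $M\in\p^*F_1(u+s)$ the associated operator. Using that $\ell$ is affine in its $(u,\nabla u)$-slot, and that $\b^{u+s}(x,\a)$ attains the minimum at $((u+s)(x),\nabla(u+s)(x))$ while any measurable minimizer $\b^u_*(x)$ attains it at $(u(x),\nabla u(x))$, I would derive the identity $E_s(x)=\ell(x,(u(x),\nabla u(x)),\a,\b^{u+s}(x,\a))-h(x,(u(x),\nabla u(x)),\a)\ge 0$ — the sub-optimality at $(u(x),\nabla u(x))$ of the minimizer chosen for the perturbed state — together with the upper bound $E_s(x)\le \Lambda_s(x,\b^u_*(x))-\Lambda_s(x,\b^{u+s}(x,\a))$ valid for every minimizer $\b^u_*(x)$, where $\Lambda_s(x,\b)\coloneqq b^i(x,\a,\b)\p_i s(x)+c(x,\a,\b)s(x)$. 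Since $|\Lambda_s(x,\b)|\le\|b\|_{L^\infty}|\nabla s(x)|+\|c\|_{L^\infty}|s(x)|$ uniformly in $\b$, this already gives the crude bound $0\le E_s(x)\le 2(\|b\|_{L^\infty}+\|c\|_{L^\infty})(|\nabla s(x)|+|s(x)|)$, which re-proves Lipschitz continuity but not semismoothness.

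To finish, it suffices to show $\|E_{s_k}\|_{L^p(\Om)}=o(\|s_k\|_{H^2(\Om)})$ for every $s_k\to0$ in $H^2(\Om)\setminus\{0\}$ and arbitrary selections; writing $t_k\coloneqq\|s_k\|_{H^2(\Om)}$, $d_k\coloneqq s_k/t_k$, by the usual subsequence principle it is enough to prove $\|E_{s_k}/t_k\|_{L^p(\Om)}\to 0$ along a subsequence. This is where the \emph{norm gap} enters: choosing $q$ with $p<q<2n/(n-2)$ (any finite $q>p$ if $n\le2$), the embedding $H^2(\Om)\hookrightarrow\hookrightarrow W^{1,q}(\Om)$ is compact, so since $\|d_k\|_{H^2(\Om)}=1$ we have, along a subsequence, $d_k\to d$ in $W^{1,q}(\Om)$ and hence, along a further subsequence, $d_k\to d$ and $\nabla d_k\to\nabla d$ a.e.\ with $|d_k|+|\nabla d_k|\le\Phi$ a.e.\ for some $\Phi\in L^q(\Om)\subseteq L^p(\Om)$; the crude bound of the previous paragraph then provides the $L^p$-domination $0\le E_{s_k}/t_k\le 2(\|b\|_{L^\infty}+\|c\|_{L^\infty})\Phi$. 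It thus remains only to show $E_{s_k}(x)/t_k\to 0$ for a.e.\ $x$. Fixing a good $x$, compactness of $\bB$ implies that any subsequence has a further one with $\b^{u+s_k}(x,\a)\to\b^\infty\in\bB$; passing to the limit in the optimality relation $\ell(x,((u+s_k)(x),\nabla(u+s_k)(x)),\a,\b^{u+s_k}(x,\a))=h(x,((u+s_k)(x),\nabla(u+s_k)(x)),\a)$, using continuity of $\ell$ in $(u,\nabla u,\b)$ and the $1$-Lipschitz stability of $h$ in its $(u,\nabla u)$-argument, shows $\b^\infty\in\argmin_{\b\in\bB}\ell(x,(u(x),\nabla u(x)),\a,\b)$; inserting this $\b^\infty$ as $\b^u_*(x)$ in the upper bound of Paragraph~2, dividing by $t_k$ so that each $\Lambda$-term acquires a factor $t_k$, and using $b^i(x,\a,\b^{u+s_k}(x,\a))\to b^i(x,\a,\b^\infty)$, $c(x,\a,\b^{u+s_k}(x,\a))\to c(x,\a,\b^\infty)$ together with the a.e.\ convergence of $d_k,\nabla d_k$, yields $E_{s_k}(x)/t_k\to 0$ along that subsequence, hence, it being true for every subsequence, along the original one. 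Dominated convergence then gives $\|E_{s_k}/t_k\|_{L^p(\Om)}\to 0$, completing the proof. The main obstacle is exactly this a.e.\ statement: unlike the case of finite $\bB$, the minimizer set at $(u(x),\nabla u(x))$ may be non-trivial, so the coefficients evaluated along $\b^{u+s_k}(x,\a)$ need not themselves converge; the key is to combine the fact that $E_{s_k}(x)$ is controlled by the \emph{difference} $\Lambda_{s_k}(x,\b^\infty)-\Lambda_{s_k}(x,\b^{u+s_k}(x,\a))$ (whose two arguments collapse onto the same minimizer set) with the fact that, after dividing by $t_k$, the residual gradient factor lies in the strictly higher-integrability space $L^q$ supplied by the norm gap, which is precisely what powers the dominated convergence.
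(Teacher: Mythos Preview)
Your proof is correct and follows essentially the same strategy as the paper's: reduce to a single component $G_\a$ via the finiteness of $\bA$, exploit the norm gap between $H^2(\Om)$ and $L^p(\Om)$ through an intermediate first-order Sobolev space, and use that cluster points of the minimizers $\b^{u+s_k}(x,\a)$ lie in the argmin at $(u(x),\nabla u(x))$ (this is precisely the upper hemicontinuity of the argmin map that the paper invokes via Theorem~\ref{thm:uhc}). The paper then defers the scalar semismoothness to \cite[Theorem~13]{smears2014} and closes with the product rule \cite[Proposition~3.6]{ulbrich2011}, whereas you supply a self-contained argument; the only technical variation is that the paper passes to $F_1:W^{1,r}(\Om)\to (L^p(\Om))^{|\bA|}$ with $r>p$ and uses the \emph{continuous} embedding $H^2\hookrightarrow W^{1,r}$, while you stay in $H^2(\Om)$ and use the \emph{compact} embedding $H^2\hookrightarrow\hookrightarrow W^{1,q}$ to extract a.e.\ convergence and $L^q$-domination of the normalized increments $d_k$ for dominated convergence---different bookkeeping for the same norm-gap mechanism.
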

\begin{proof}
Since $\bA$ is a finite set, we shall assume without  loss of generality that, the Banach space $(L^p(\Om))^{|\bA|}$ is endowed with the usual product norm $\|\cdot\|_{p,\bA}$, i.e., for all  ${u}\in (L^p(\Om))^{|\bA|}$, $\|{u}\|_{p,\bA}=\sum_{\a\in \bA}\|u(\cdot,\a)\|_{L^p(\Om)}$.
Note that the Sobolev embedding theorem shows that the following  injections  are continuous: $H^2(\Om)\hookrightarrow W^{1,q}(\Om)$, for all $q\ge 2, n\le 2$, and $H^2(\Om)\hookrightarrow W^{1,2n/(n-2)}(\Om)$, for all $n>2$. Thus for any given $p$ satisfying the conditions in Proposition \ref{prop:F1}, we can find $r\in (p,\infty)$ such that the injection $H^2(\Om)\hookrightarrow W^{1,r}(\Om)$ is continuous.  Then, the boundedness of $b^i, c,f$ implies that the mappings $F_1$ and $\p^* F_1$ are well-defined, and $F_1:H^2(\Om)\to (L^p(\Om))^{|\bA|}$ is Lipschitz continuous.

Now we show that  the mapping $\p^*F_1$ has a nonempty image from $W^{1,r}(\Om)$ to $(L^p(\Om))^{|\bA|}$, where we 
choose $r\in (p,\infty)$ such that the injection $H^2(\Om)\hookrightarrow W^{1,r}(\Om)$ is continuous, and 
naturally extend the operators $F_1$ and $\p^*F_1$ from $H^2(\Om)$ to $W^{1,r}(\Om)$.
For each $u\in  W^{1,r}(\Om)$, we consider the Carath\'{e}odory function $g:\Om\t\bA\t \bB\to \R$ such that $g(x,\a,\b)\coloneqq \ell(x,(u,\nabla u)(x),\a,\b)$ for all $(x,\a,\b)\in \Om\t \bA\t \bB$, where $\ell$ is defined by \eqref{eq:h}.
Theorem \ref{thm:measurable_selection} shows there exists a function  $\b^u:\Om\t \bA\to \bB$ satisfying \eqref{eq:b^u}, i.e.,
$$\b^u(x,\a)\in \argmin_{\b\in\bB}\ell\big(x,\big(u(x),\nabla u(x)\big),\a,\b\big),\q \fa (x,\a)\in \Om\t \bA,$$
and $\b^u$ is jointly measurable with respect to the product $\sigma$-algebra on $ \Om\t \bA$. 
Hence $\p^*F_1(u)$ is nonempty for all $u\in W^{1,r}(\Om)$.


We proceed to show that  the operator $F_1$ is in fact $\p^*F_1$-semismooth from $W^{1,r}(\Om)$ to $(L^p(\Om))^{|\bA|}$,
which implies the desired conclusion due to the continuous embedding $H^2(\Om)\hookrightarrow W^{1,r}(\Om)$.
For each $\a\in \bA$, we denote by $F_{1,\a}:W^{1,r}(\Om)\to L^p(\Om)$ the $\a$-th component of $F_1$, and by $\p^*F_{1,\a}$  the $\a$-th component of $\p^*F_1$.
Theorem \ref{thm:uhc} and the continuity of $\ell$ in $\u$ show that for each $(x,\a)\in \Om\t \bA$, the set-valued mapping 
$$
 \u\in \R^{n+1} \rightrightarrows \arg \min_{\b\in \bB}\ell(x,\u,\a,\b)\subseteq	 \bB,
$$
 is upper hemicontinuous, from which,  by  following  precisely the steps in the arguments for \cite[Theorem 13]{smears2014}, 
we can  prove  that   $F_{1,\a}:W^{1,r}(\Om)\to L^p(\Om)$ is  $\p^*F_{1,\a}$-semismooth.
Then, by using the fact that a direct product of semismooth operators is again semismooth with respect to the direct product of the generalized differentials of the components (see \cite[Proposition 3.6]{ulbrich2011}), we can deduce that $F_1:W^{1,r}(\Om)\to (L^p(\Om))^{|\bA|}$ is semismooth with respect to the generalized differential $\p^*F_1$ and finishes the proof.
\end{proof}

We then establish the semismoothness of  a general  pointwise maximum operator, by extending the result in \cite{hintermuller2002} for the max-function $f:x\in \R\to \max(x,0)$.
\begin{Proposition}\l{prop:F2}
Let $p\in (2,\infty)$ be a  given constant,  $\bA$ be a finite set, and $\Om$ be a bounded subset of $\R^n$. Let 
$F_2:(L^p(\Om))^{|\bA|}\to L^2(\Om)$ be the pointwise maximum operator such that for each ${u}=(u(\cdot,\a))_{\a\in \bA}\in (L^p(\Om))^{|\bA|}$, 
\bb\l{eq:p_F2}
F_2({u})(x)\coloneqq \max_{\a\in \bA}u(x, \a), \q \textnormal{for a.e. $x\in \Om$.}
\ee
Then $F_2$  is $\p^* F_2$-semismooth  in $(L^p(\Om))^{|\bA|}$ with a generalized differential
\begin{align*}
\p^* F_2:(L^p(\Om))^{|\bA|}&\rightarrow \cL((L^p(\Om))^{|\bA|}, L^2(\Om))
\end{align*}
defined  as follows: for any ${u}=(u(\cdot,\a))_{\a\in \bA},{v}=(v(\cdot,\a))_{\a\in \bA}\in (L^p(\Om))^{|\bA|}$,  we have
$$
\big(\p^*F_2({u}){v}\big)(x)\coloneqq v(x,\a^{{u}}(x)), \q \textnormal{for  $x\in \Om$,}
$$
where  $\a^{{u}}:\Om\to \bA$ is any measurable function such that 
\bb\l{eq:a^u}
\a^{{u}}(x)\in  \arg\max_{\a\in \bA}\left(u(x,\a)\right), \q \textnormal{for  $x\in \Om$.}
\ee
Moreover, $\p^*F_2({u})$ is uniformly bounded (in the operator norm) for all ${u}\in  (L^p(\Om))^{|\bA|}$.
\end{Proposition}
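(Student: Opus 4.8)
\medskip
\noindent
The plan is to realize $F_2$ as the Nemytskii operator generated by the finite--maximum map $\phi:\R^{|\bA|}\to\R$, $\phi((t_\a)_{\a\in\bA})=\max_{\a\in\bA}t_\a$, which is piecewise affine and globally $1$--Lipschitz, and then to run the ``active--set'' argument used for the scalar max--function in \cite{hintermuller2002}, the crucial ingredient being the norm gap $p>2$ between the domain $(L^p(\Om))^{|\bA|}$ and the target $L^2(\Om)$. First I would dispose of the structural claims. Since $\bA$ is finite, fixing an enumeration of $\bA$ and breaking ties by smallest index exhibits a measurable selection $\a^u:\Om\to\bA$ of $x\mapsto\argmax_{\a\in\bA}u(x,\a)$, so the image of $\p^*F_2$ is nonempty; for any such selection, $v(\cdot,\a^u(\cdot))=\sum_{\a\in\bA}\mathbf{1}_{\{\a^u=\a\}}v(\cdot,\a)$ is measurable and depends linearly on $v$, and since $\Om$ is bounded with $p>2$ one has the continuous embedding $L^p(\Om)\hookrightarrow L^2(\Om)$ with constant $|\Om|^{1/2-1/p}$. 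Hence
\[
\|\p^*F_2(u)v\|_{L^2(\Om)}\le\Big\|\max_{\a\in\bA}|v(\cdot,\a)|\Big\|_{L^2(\Om)}\le|\Om|^{1/2-1/p}\,\|v\|_{p,\bA},
\]
so $\p^*F_2(u)\in\cL((L^p(\Om))^{|\bA|},L^2(\Om))$ with operator norm $\le|\Om|^{1/2-1/p}$ independently of $u$, which is the asserted uniform boundedness; the same embedding together with $|\phi(t)-\phi(t')|\le\max_\a|t_\a-t'_\a|$ shows that $F_2$ is globally Lipschitz, in particular continuous as required.

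For the semismoothness estimate I would fix $u$, take $s\in(L^p(\Om))^{|\bA|}$, pick any $M\in\p^*F_2(u+s)$ associated with a measurable selection $\a^{u+s}(x)\in\argmax_{\a}(u+s)(x,\a)$, and write $\sigma(x):=\max_{\a\in\bA}|s(x,\a)|$. A one--line computation gives
\[
\big(F_2(u+s)-F_2(u)-Ms\big)(x)=u\big(x,\a^{u+s}(x)\big)-\max_{\a\in\bA}u(x,\a)=:-e_s(x)\le 0.
\]
Choosing any $\a^\ast(x)\in\argmax_\a u(x,\a)$, optimality of $\a^{u+s}(x)$ for $(u+s)(x,\cdot)$ yields $0\le e_s(x)\le s(x,\a^{u+s}(x))-s(x,\a^\ast(x))\le 2\sigma(x)$; introducing the gap function $g(x):=\max_\a u(x,\a)-\max_{\a\notin\argmax_\a u(x,\cdot)}u(x,\a)\in(0,+\infty]$ (with $g(x)=+\infty$ when $u(x,\cdot)$ is constant on $\bA$), one checks that $e_s(x)>0$ forces $\a^{u+s}(x)$ to be a non--maximizer of $u(x,\cdot)$, hence $g(x)\le e_s(x)\le 2\sigma(x)$, i.e.\ $\{e_s>0\}\subseteq\{g\le 2\sigma\}$. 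Hölder's inequality with exponents $p/2$ and $p/(p-2)$ then gives, uniformly in the chosen selection,
\[
\big\|F_2(u+s)-F_2(u)-Ms\big\|_{L^2(\Om)}=\|e_s\|_{L^2(\Om)}\le 2\,\|\sigma\|_{L^p(\Om)}\,\big|\{g\le 2\sigma\}\big|^{\frac{p-2}{2p}}\le 2\,\|s\|_{p,\bA}\,\big|\{g\le 2\sigma\}\big|^{\frac{p-2}{2p}},
\]
using $\|\sigma\|_{L^p}\le\|s\|_{p,\bA}$. Taking the supremum over $M\in\p^*F_2(u+s)$, it only remains to show that $|\{g\le 2\sigma\}|\to 0$ as $\|s\|_{p,\bA}\to 0$.

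This last point is where the argument has real content, and I expect it to be the main obstacle: $g$ is only strictly positive, not bounded away from $0$, so there is no pointwise stability of the active set. I would argue by measure instead. The set $\{g\le t\}=\bigcup_{\a\in\bA}\{0<\max_{\a'}u(\cdot,\a')-u(\cdot,\a)\le t\}$ is measurable, and $\bigcap_{\eps>0}\{g\le 2\eps\}=\emptyset$, so continuity from above of the finite measure on $\Om$ gives, for any $\eta>0$, some $\eps>0$ with $|\{g\le 2\eps\}|<\eta/2$; on the other hand, Chebyshev's inequality gives $|\{\sigma>\eps\}|\le\eps^{-p}\|\sigma\|_{L^p}^p\le\eps^{-p}\|s\|_{p,\bA}^p<\eta/2$ once $\|s\|_{p,\bA}$ is small enough. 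Since $\{g\le 2\sigma\}\subseteq\{g\le 2\eps\}\cup\{\sigma>\eps\}$, this yields $|\{g\le 2\sigma\}|<\eta$, hence $|\{g\le 2\sigma\}|^{(p-2)/(2p)}\to 0$ as $\|s\|_{p,\bA}\to 0$, and the displayed bound gives $\sup_{M\in\p^*F_2(u+s)}\|F_2(u+s)-F_2(u)-Ms\|_{L^2(\Om)}=o(\|s\|_{p,\bA})$, i.e.\ $F_2$ is $\p^*F_2$--semismooth. The only places care is genuinely needed are the measurability of the selections $\a^u$ and of $g$, and the norm gap: the exponent $p>2$ is precisely what converts ``small measure of the transition set'' into a genuine $o(\cdot)$ rather than an $O(\cdot)$ bound.
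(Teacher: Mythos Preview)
Your proof is correct and follows the same overall strategy as the paper: bound the semismoothness residual pointwise by $2\sigma$ on a ``transition set'' where the active index for $u+s$ fails to maximize $u$, use H\"older with the norm gap $p>2$ to convert this into $\|s\|_{p,\bA}$ times a power of the measure of that set, and finally show this measure tends to zero. The one noteworthy difference is in the last step. The paper argues by contradiction: it extracts a subsequence along which $s_k\to 0$ pointwise a.e., observes that for a.e.\ $x$ the selected index $\a^{u+s_k}(x)$ eventually lands in $\argmax_\a u(x,\a)$, and then applies the bounded convergence theorem to the indicator of the transition set. You instead introduce the gap function $g$ and combine continuity of measure from above (giving $|\{g\le 2\eps\}|\to 0$ as $\eps\downarrow 0$) with Chebyshev's inequality (giving $|\{\sigma>\eps\}|\to 0$ as $\|s\|_{p,\bA}\to 0$). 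Your route is slightly cleaner in that it is direct rather than by contradiction, avoids the subsequence extraction, and in principle yields an explicit modulus of semismoothness in terms of the distribution function of $g$; the paper's argument is marginally softer on measurability bookkeeping since it never needs to define or analyse $g$.
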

\begin{proof}

Let the Banach space $(L^p(\Om))^{|\bA|}$ be endowed with the  product norm $\|\cdot\|_{p,\bA}$ defined as in the proof of Proposition \ref{prop:F1}.
We first show the mappings $F_2$ and $\p^*F_2$ are well-defined, $\p^*F_2$ has nonempty images, and $\p^*F_2(u)$ is uniformly bounded for $u\in (L^p(\Om))^{|\bA|}$.

The finiteness of $\bA$  implies that any  ${u}\in (L^p(\Om))^{|\bA|}$ 
  can also  be viewed as  a Carath\'{e}odory function $u:\Om\t \bA\to \R$. Hence for any given ${u}\in (L^p(\Om))^{|\bA|}$, we can deduce from Theorem \ref{thm:measurable_selection} the existence of a measurable function $\a^{{u}}:\Om\to \bA$ satisfying \eqref{eq:a^u}. Moreover, for  any given measurable function $\a^{{u}}:\Om\to \bA$ and ${v}\in (L^p(\Om))^{|\bA|}$, the function $\p^*F_2({u}){v}$ remains  Lebesgue measurable (see Remark \ref{rmk:cara}). 
Then, for any given ${u}\in (L^p(\Om))^{|\bA|}$ with $p>2$, one can easily  check that  $F_2({u})\in L^2(\Om)$, and $\p^*F_2({u})\in \cL((L^p(\Om))^{|\bA|}, L^2(\Om))$, which subsequently implies that $F_2$ and $\p^*F_2$ are well-defined, and the image of $\p^*F_2$ is nonempty on $(L^p(\Om))^{|\bA|}$. Moreover, for any $u,v\in(L^p(\Om))^{|\bA|}$, H\"{o}lder's inequality leads to the following estimate:
$$
\int_\Om |v(x,\a^{{u}}(x))|^2\,dx\le \int_\Om \sum_{\a\in \bA} |v(x,\a)|^2\,dx\le \sum_{\a\in \bA}|\Om|^{(p-2)/p}   \|v(\cdot,\a)\|_{L^p(\Om)}^2,
$$
which shows that $\|\p^*F_2({u})\|_{\cL((L^p(\Om))^{|\bA|}, L^2(\Om))}\le |\Om|^{(p-2)/(2p)}$ for all ${u}\in  (L^p(\Om))^{|\bA|}$.

Now  we prove by contradiction that the operator $F_2$ is $\p^*F_2$-semismooth. Suppose there exists a constant $\delta>0$ and functions ${u}, \{{v}_k\}_{k=1}^\infty\in  (L^p(\Om))^{|\bA|}$ such that $\|{v}_k\|_{p,\bA}\to 0$ as $k\to \infty$, and 
\bb\l{eq:F2_contradiction}
\|F_2({u}+{v}_k)-F_2({u})-\p^*F_2({u}+{v}_k){v}_k\|_{L^2(\Om)}/\|{v}_k\|_{p,\bA}\ge \delta>0,\q k\in \N,
\ee
where for each $k\in \N$, $\p^*F_2({u}+{v}_k)$ is defined with some measurable function $\a^{{u+v_k}}:\Om\to \bA$. 
Then, by  passing to a subsequence, we may assume that for all $\a\in \bA$, the sequence $\{{v}_k(\cdot,\a)\}_{k\in \N}$ converges to zero pointwise a.e.~in $ \Om$, as $k\to \infty$.

For notational simplicity, we define $ \Sigma(x,u)\coloneqq \arg\max_{\a\in \bA}\left(u(x,\a)\right)$ for all $u\in (L^p(\Om))^{|\bA|}$ and  $x\in \Om$. Then
for a.e.~$x\in \Om$, we have $\lim_{k\to \infty}{v}_k(x,\a)= 0$ for all $\a\in \bA$,  $\a^{{u+v_k}}(x)\in \Sigma(x,u+v_k)$ for all $k\in\N$. By using the finiteness of  $\bA$ and the convergence of  $\{{v}_k(\cdot,\a)\}_{k\in \N}$, it is straightforward to prove by contradiction  that  for all such $x\in \Om$, $\a^{{u+v_k}}(x)\in \Sigma(x,u)$ for all  large  enough $k$.

We now   derive an upper bound of the left-hand side of \eqref{eq:F2_contradiction}. For a.e.~$x\in \Om$, we have
\begin{align*}
F_2({u}&+{v}_k)(x)-F_2({u})(x)-\big(\p^*F_2({u}+{v}_k){v}_k\big)(x)\\
&\le ({u}+{v}_k)(x,\a^{{u+v_k}}(x))-u(x,\a^{{u+v_k}}(x))-v_k(x,\a^{{u+v_k}}(x))= 0,\\
F_2({u}&+{v}_k)(x)-F_2({u})(x)-\big(\p^*F_2({u}+{v}_k){v}_k\big)(x)\\
&\ge ({u}+{v}_k)(x,{\a}^{u}(x))-u(x,{\a}^{u}(x))-v_k(x,\a^{{u+v_k}}(x))
= {v}_k(x,{\a}^{u}(x))-v_k(x,\a^{{u+v_k}}(x)),
\end{align*}
from any $\a^u(x)\in  \Sigma(x,u)$.
Thus, for each $k\in \N$, we have for a.e.~$x\in \Om$ that,
\begin{align*}
|F_2({u}+{v}_k)(x)-F_2({u})(x)-\big(\p^*F_2({u}+{v}_k){v}_k\big)(x)|\le \phi_k(x)\coloneqq \inf_{\a^u\in  \Sigma(x,u)}| {v}_k(x,{\a}^{u})-v_k(x,\a^{{u+v_k}}(x))|,
\end{align*}
where, by applying Theorem \ref{thm:measurable_selection} twice, we can see that both the set-valued mapping $x\rightrightarrows \Sigma(x,u)$ and the function $\phi_k$ are measurable. 

We then introduce  the  set $\Om_k=\{x\in \Om\mid \a^{{u+v_k}}(x)\not\in \Sigma(x,u)\}$ for each $k\in \N$. The measurability of the set-valued mapping $x\rightrightarrows \Sigma(x,u)$ implies the associated distance function $\rho(x,\a)\coloneqq \textrm{dist}(\a,\Sigma(x,u))$ is a Carath\'{e}odory function (see \cite[Theorem 18.5]{aliprantis1999}), which subsequently leads to the measurability of $\Om_k$ for all $k$. Hence we can deduce   that 
\begin{align*}
&\|F_2({u}+{v}_k)-F_2({u})-\p^*F_2({u}+{v}_k){v}_k\|^2_{L^2(\Om)}\le \int_{\Om_k}  \inf_{\a^u\in  \Sigma(x,u)} |{v}_k(x,{\a}^{u})-v_k(x,\a^{{u+v_k}}(x))|^2\,dx\\
&\le2\int_{\Om_k} \sum_{\a\in \bA}|{v}_k(x,\a)|^2\,dx\le 
2 \sum_{\a\in \bA}|\Om_k|^{(p-2)/p}   \|v_k(\cdot,\a)\|_{L^p(\Om)}^2,
\end{align*}
which leads to the following estimate:
$$
\|F_2({u}+{v}_k)-F_2({u})-\p^*F_2({u}+{v}_k){v}_k\|_{L^2(\Om)}/\|v_k\|_{p,\bA}\le \sqrt{2}|\Om_k|^{(p-2)/(2p)} \to 0, \q \textnormal{as $k\to \infty$},   
$$
where we have used  the bounded convergence theorem and the fact that for a.e.~$x\in \Om$, $1_{\Om_k}(x)=0$ for all large enough $k$. This contradicts to the hypothesis \eqref{eq:F2_contradiction}, and hence finishes our proof.
\end{proof}

Now we are ready to conclude the semismoothness of the HJBI operator.
Note that the argument in \cite{smears2014}  does not apply directly  to the HJBI operator, due to the nonconvexity of the Hamiltonian $G$ defined as in \eqref{eq:G}.

\begin{Theorem}\l{thm:semismooth_F}
Suppose (H.\ref{assum:D}) holds, and let $F:H^2(\Om)\to L^2(\Om)$ be the HJBI operator defined as in \eqref{eq:hjb}.
Then 
$F$ is semismooth in $H^2(\Om)$, with a generalized differential $\p^*F:H^2(\Om)\to \cL(H^2(\Om), L^2(\Om))$ defined  as follows: for any $u\in H^2(\Om)$, 
\bb\l{eq:p_F}
\p^*F(u)\coloneqq -a^{ij}(\cdot)\p_{ij}+b^{i}(\cdot,\a(\cdot),\b^u(\cdot,\a(\cdot)))\p_i +c(\cdot,\a(\cdot),\b^u(\cdot,\a(\cdot))),
\ee
where $\b^u:\Om\t \bA\to \bB$ is any jointly measurable function satisfying \eqref{eq:b^u}, and $\a:\Om\to \bA$  is any  measurable function such that 
\bb\l{eq:measurable_a}
\a(x)\in  \arg \max_{\a\in \A}\bigg[\min_{\b\in \bB}\left(b^i(x,\a,\b)\p_i u(x)+c(x,\a,\b)u(x)-f(x,\a,\b)\right)\bigg], \q \textnormal{for a.e.~$x\in \Om$.}
\ee
\end{Theorem}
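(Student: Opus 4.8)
The plan is to realise $F$ as the sum of a bounded linear operator and a composition of the two semismooth operators already analysed. Write $F(u) = Lu + (F_2\circ F_1)(u)$, where $Lu \coloneqq -a^{ij}\p_{ij}u$ and $F_1,F_2$ are the operators from Propositions \ref{prop:F1} and \ref{prop:F2}. Since $\tfrac{2n}{n-2}>2$ for every $n>2$, we may fix an exponent $p\in(2,\infty)$, subject to the extra requirement $p<\tfrac{2n}{n-2}$ when $n>2$, for which both propositions apply simultaneously: $F_1\colon H^2(\Om)\to(L^p(\Om))^{|\bA|}$ is Lipschitz and $\p^*F_1$-semismooth, while $F_2\colon(L^p(\Om))^{|\bA|}\to L^2(\Om)$ is $\p^*F_2$-semismooth with $\p^*F_2$ uniformly bounded in operator norm. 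Because $L$ is bounded and linear, $F$ is semismooth in $H^2(\Om)$ if and only if $F_2\circ F_1$ is, with $\p^*F(u)=L+\p^*(F_2\circ F_1)(u)$, so it suffices to establish a chain rule for $F_2\circ F_1$.

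For the chain rule, fix $u\in H^2(\Om)$ and take $s\in H^2(\Om)$ with $\|s\|_{H^2(\Om)}\to 0$. For arbitrary $M_1\in\p^*F_1(u+s)$ and $M_2\in\p^*F_2(F_1(u+s))$, I would split
\begin{align*}
(F_2\circ F_1)(u+s)-(F_2\circ F_1)(u)-M_2M_1s
&= \big[F_2(F_1(u+s))-F_2(F_1(u))-M_2t_s\big] \\
&\quad + M_2\big[F_1(u+s)-F_1(u)-M_1s\big],
\end{align*}
with $t_s\coloneqq F_1(u+s)-F_1(u)$. The Lipschitz continuity of $F_1$ gives $\|t_s\|_{p,\bA}\le C\|s\|_{H^2(\Om)}\to 0$, so the $\p^*F_2$-semismoothness of $F_2$ (with $M_2$ ranging over $\p^*F_2(F_1(u)+t_s)$) bounds the first bracket by $o(\|t_s\|_{p,\bA})=o(\|s\|_{H^2(\Om)})$; the uniform bound $\|M_2\|\le|\Om|^{(p-2)/(2p)}$ together with the $\p^*F_1$-semismoothness of $F_1$ bounds the second term by $|\Om|^{(p-2)/(2p)}\,o(\|s\|_{H^2(\Om)})$. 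Taking the supremum over all admissible $M_1,M_2$ yields $o(\|s\|_{H^2(\Om)})$ in $L^2(\Om)$, i.e.\ $F_2\circ F_1$ is semismooth with generalized differential $\{\,M_2M_1 : M_2\in\p^*F_2(F_1(u)),\ M_1\in\p^*F_1(u)\,\}$.

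It then remains to identify this composite differential with \eqref{eq:p_F}. An element $M_2\in\p^*F_2(F_1(u))$ is determined by a measurable $\a\colon\Om\to\bA$ with $\a(x)\in\argmax_{\a}F_1(u)(x,\a)=\argmax_{\a}\min_{\b}\ell(x,\u(x),\a,\b)$, which is exactly \eqref{eq:measurable_a}, and $M_1\in\p^*F_1(u)$ is determined by a jointly measurable $\b^u\colon\Om\t\bA\to\bB$ satisfying \eqref{eq:b^u}. For $w\in H^2(\Om)$ one computes $(M_2M_1w)(x)=(M_1w)(x,\a(x))=b^i(x,\a(x),\b^u(x,\a(x)))\p_iw(x)+c(x,\a(x),\b^u(x,\a(x)))w(x)$; since $\bA$ is finite, $x\mapsto\b^u(x,\a(x))=\sum_{\a\in\bA}\mathbf 1_{\{\a(\cdot)=\a\}}(x)\,\b^u(x,\a)$ is measurable, and boundedness of $b^i,c$ makes $L+M_2M_1$ a bounded operator from $H^2(\Om)$ to $L^2(\Om)$ of precisely the form \eqref{eq:p_F}. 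Nonemptiness of $\p^*F(u)$ follows from the existence of the selections $\a$ and $\b^u$ via the measurable selection theorem (Theorem \ref{thm:measurable_selection}), exactly as in the proofs of Propositions \ref{prop:F1}–\ref{prop:F2}.

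The main obstacle is the chain-rule step, and within it the role of the norm gap: the intermediate space must be $L^p(\Om)$ with $p>2$ \emph{strictly}, so that $\p^*F_2$ maps into $L^2(\Om)$ with a \emph{uniform} operator-norm bound; it is this uniform bound — not merely local boundedness of a Newton-type derivative — that controls the second term of the splitting (and which will later furnish a uniformly bounded inverse of $\p^*F$). A secondary technical point is ensuring that the measurable selections hidden in $\p^*F_1$ and $\p^*F_2$ compose into a single jointly measurable feedback pair $(\a,\b^u(\cdot,\a(\cdot)))$, which is handled by the finiteness of $\bA$.
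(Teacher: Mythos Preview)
Your proof is correct and follows essentially the same approach as the paper: decompose $F=F_0+F_2\circ F_1$ with $F_0=-a^{ij}\p_{ij}$, and combine the semismoothness results for $F_1$ and $F_2$ from Propositions \ref{prop:F1} and \ref{prop:F2} via a chain rule, using the same norm-gap choice of $p\in(2,2n/(n-2))$. The only cosmetic difference is that the paper invokes \cite[Proposition 3.8]{ulbrich2011} for the chain rule, whereas you write out the two-term splitting argument explicitly; both yield the composite generalized differential $\p^*F_2(F_1(u))\circ\p^*F_1(u)$ and hence \eqref{eq:p_F}.
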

\begin{proof}
Note that we can decompose the HJBI operator $F:H^2(\Om)\to L^2(\Om)$ into $F=F_0+F_2\circ F_1$, where $F_0:H^2(\Om)\to L^2(\Om)$ is the linear operator $u\mapsto -a^{ij}\p_{ij}u$, $F_1:H^2(\Om)\to (L^p(\Om))^{|\bA|}$ is the HJB operator defined in Proposition \ref{prop:F1}, $F_2:(L^p(\Om))^{|\bA|}\to L^2(\Om)$ is the pointwise maximum operator defined in Proposition \ref{prop:F2}, and $p$ is a constant satisfying  $p> 2$ if $n\le 2$, and $p\in (2,2n/(n-2))$ if $n>2$.

 Proposition \ref{prop:F1} shows that $F_1$ is Lipschitz continuous and semismooth with respect to the generalized differential $\p^*F_1$ defined by \eqref{eq:p_F1}, while Proposition \ref{prop:F2} 
shows that $F_2$ is  semismooth with respect to the uniformly bounded generalized differential $\p^*F_2$ defined by \eqref{eq:p_F2}. Hence, we know the composed operator $F_2\circ F_1$ is  semismooth with respect to the composition of the generalized differentials (see \cite[Proposition 3.8]{ulbrich2011}), i.e., $\p^*(F_2\circ F_1)(u)=\p^*F_2(F_1(u))\circ \p^*F_1(u)$ for all $u\in H^2(\Om)$. Consequently, by using the fact that $F_0$ is  Fr\'{e}chet differentiable with the  derivative  $-a^{ij}\p_{ij}\in \cL(H^2(\Om),L^2(\Om))$,  we can conclude from Propositions \ref{prop:F1} and \ref{prop:F2} that   $F:H^2(\Om)\to L^2(\Om)$ is  semismooth on $H^2(\Om)$, and  that \eqref{eq:p_F} is a desired generalized differential of $F$ at $u$.
\end{proof}

Note that  the above  characterization of the generalized differential of the HJBI operator
involves a jointly measurable function $\b^u:\Om\t \bA\to \bB$,
satisfying \eqref{eq:b^u} for all $(x,\a)\in \Om\t \bA$.
We now present a technical lemma, which allows us to  view the control law $\b^k$ in \eqref{eq:b_k} as 
such a feedback control on $x\in \Om$ and $\a\in \bA$.

\begin{Lemma}\l{lemma:b_ext}
Suppose (H.\ref{assum:D}) holds. Let $h,\{h_i\}_{i=1}^n:\Om\to \R$, $\a^h:\Om\to \bA$ be given measurable functions,    and $\b^h:\Om\to \bB$ be a measurable function such that  for all $x\in \Om$,
\bb\l{eq:b_h}
{\b}^h(x)\in  \arg\min_{\b\in \bB}\left(b^i(x,\a^h(x),\b)h_i(x)+c(x,\a^h(x),\b)h(x)-f(x,\a^h(x),\b)\right).
\ee
%
%
Then there exists a jointly measurable function $\tilde{\b}^h:\Om\t \bA\to \bB$ such that $\b^h(x)=\tilde{\b}^h(x,\a^h(x))$ for all $x\in \Om$, and it holds for all $x\in \Om$  and  $\a\in \bA$ that 
\bb\l{eq:b_exd}
\tilde{\b}^h(x,\a)\in  \arg\min_{\b\in \bB}\left(b^i(x,\a,\b)h_i(x)+c(x,\a,\b)h(x)-f(x,\a,\b)\right).
\ee
%
\end{Lemma}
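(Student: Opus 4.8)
The plan is to use the finiteness of $\bA$ to reduce everything to a measurable partition of $\Om$ together with one application of the measurable selection result of Theorem \ref{thm:measurable_selection}. Write $\bA=\{\a_1,\dots,\a_m\}$ and, for each $\a\in\bA$, set
\[
g_\a(x,\b)\coloneqq b^i(x,\a,\b)h_i(x)+c(x,\a,\b)h(x)-f(x,\a,\b),\q (x,\b)\in\Om\t\bB .
\]
Since $b^i,c,f$ are Carath\'eodory functions (Remark \ref{rmk:cara}) and $h,h_1,\dots,h_n$ are measurable, each $g_\a$ is a Carath\'eodory function on $\Om\t\bB$, so Theorem \ref{thm:measurable_selection} provides a measurable $\b_\a:\Om\to\bB$ with $\b_\a(x)\in\arg\min_{\b\in\bB}g_\a(x,\b)$ for every $x\in\Om$.

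Next I would partition $\Om$ according to the value of $\a^h$: put $\Om_j\coloneqq(\a^h)^{-1}(\{\a_j\})$, so that $\{\Om_j\}_{j=1}^m$ is a measurable partition of $\Om$ (disjointness from $\a^h$ being single-valued, covering from $\a^h$ being defined on all of $\Om$). I then define $\tilde\b^h:\Om\t\bA\to\bB$ slicewise by
\[
\tilde\b^h(x,\a_j)\coloneqq
\begin{cases}
\b^h(x),& x\in\Om_j,\\
\b_{\a_j}(x),& x\in\Om\setminus\Om_j,
\end{cases}\qquad j=1,\dots,m .
\]
Each slice $\tilde\b^h(\cdot,\a_j)$ is measurable, being $\b^h$ on $\Om_j$ and $\b_{\a_j}$ on its complement; and since $\bA$ is finite, a map on $\Om\t\bA$ is jointly measurable for the product $\sigma$-algebra exactly when all its slices are, because for a Borel $V\subseteq\bB$ one has $(\tilde\b^h)^{-1}(V)=\bigcup_{j=1}^m\big((\tilde\b^h(\cdot,\a_j))^{-1}(V)\t\{\a_j\}\big)$.

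The two required pointwise properties are then immediate from the construction. If $\a^h(x)=\a_j$ then $x\in\Om_j$, so $\tilde\b^h(x,\a^h(x))=\tilde\b^h(x,\a_j)=\b^h(x)$, giving the interpolation identity. For \eqref{eq:b_exd}, fix $x$ and $\a=\a_j$: when $x\in\Om_j$ we have $\a^h(x)=\a_j$ and $\tilde\b^h(x,\a_j)=\b^h(x)$, which minimizes $g_{\a_j}(x,\cdot)$ over $\bB$ by hypothesis \eqref{eq:b_h}; when $x\notin\Om_j$, $\tilde\b^h(x,\a_j)=\b_{\a_j}(x)$ minimizes $g_{\a_j}(x,\cdot)$ by the choice of $\b_{\a_j}$. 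Either way $\tilde\b^h(x,\a)\in\arg\min_{\b\in\bB}g_\a(x,\b)$, which is \eqref{eq:b_exd}. I expect the only genuinely non-elementary step to be the measurable selection producing the auxiliary minimizers $\b_\a$ (needed because, off the set where $\a=\a^h$, no control has been specified a priori), and this is exactly what Theorem \ref{thm:measurable_selection} delivers; the rest is bookkeeping on the finite partition $\{\Om_j\}_{j=1}^m$.
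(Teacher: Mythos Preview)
Your proof is correct and follows essentially the same approach as the paper: both arguments use $\b^h$ on the ``diagonal'' set where $\a=\a^h(x)$ and fill in the complement with a measurable selector, then verify joint measurability via the finiteness of $\bA$. The only cosmetic difference is that the paper invokes a single jointly measurable selector $\bar\b:\Om\t\bA\to\bB$ (obtained in the proof of Proposition~\ref{prop:F1}) for the off-diagonal part, whereas you apply Theorem~\ref{thm:measurable_selection} separately for each $\a\in\bA$ and paste the resulting slices together; since $\bA$ is finite these are equivalent.
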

\begin{proof}
Let $\b^h:\Om\to \bB$ be a given measurable function satisfying \eqref{eq:b_h} for all $x\in \Om$
(see Remark \ref{rmk:cara} and Theorem \ref{thm:measurable_selection} for the existence of such a measurable function).
As shown in the proof of Proposition \ref{prop:F1}, there exists a jointly measurable function $\bar{\b}:\Om\t \bA\to \bB$ 
satisfying  the  property   \eqref{eq:b_exd}
for all $(x,\a)\in \Om\t \bA$. Now suppose that   $\bA=\{\a_i\}_{i=1}^{ |\bA|}$ with $|\bA|<\infty$ (see (H.\ref{assum:D})), we shall define the   function
$\tilde{\b}^h(x,\a):\Om\t \bA\to \bB$, such that for all $(x,\a)\in \Om\t \bA$,
$$
\tilde{\b}^h(x,\a)=
\begin{cases}
\b^h(x), & (x,\a)\in \cC\coloneqq \bigcup_{i=1}^{ |\bA|}
\big(\{x\in \Om\mid \a^h(x)=\a_i\}\t\{\a_i\}\big),\\
\bar{\b}(x,\a), &\textnormal{otherwise.}
\end{cases}
$$
The measurability of $\a^h$ and the finiteness of $\bA$ imply that  
the set $\cC$ is measurable in the product $\sigma$-algebra on $ \Om\t \bA$,
which along with the joint measurability of $\bar{\b}$
leads to the joint measurability of the function $\tilde{\b}^h$.

For any given $x\in \Om$, we have $(x,\a^h(x))\in \{y\in \Om\mid \a^h(y)=\a^h(x)\}\t\{\a^h(x)\}$,
from which we can deduce from 
the definition of $\tilde{\b}^h$  that $\tilde{\b}^h(x,\a^h(x))=\b^h(x)$ for all $x\in \Om$. 
Finally, for any given  $\a_i\in \bA$, we shall  verify \eqref{eq:b_exd}  for all $x\in \Om$ and $\a=\a_i$. 
Let $x\in \Om$ be fixed. If  $\a^h(x)=\a_i$, 
then
the fact that  $(x,\a_i)\in \cC$ and the definition of $\tilde{\b}^h$ imply that $\tilde{\b}^h(x,\a_i)=\b^h(x)$,
which along with  \eqref{eq:b_h} and $\a^h(x)=\a_i$
shows that \eqref{eq:b_exd} holds for the point $(x,\a_i)$. On the other hand, if 
$\a^h(x)\not=\a_i$, 
then 
$(x,\a_i)\not \in \cC$ and
$\tilde{\b}^h(x,\a_i)=\bar{\b}(x,\a_i)$ satisfies the condition \eqref{eq:b_exd} due to the selection of $\bar{\b}$.
\end{proof}

As a direct consequence of the above extension result, we now present 
an equivalent characterization of the generalized differential of the HJBI operator.

\begin{Corollary}\l{cor:semismooth_F}
Suppose (H.\ref{assum:D}) holds, and let $F:H^2(\Om)\to L^2(\Om)$ be the HJBI operator defined as in \eqref{eq:hjb}.
Then 
$F$ is semismooth in $H^2(\Om)$, with a generalized differential $\p^*F:H^2(\Om)\to \cL(H^2(\Om), L^2(\Om))$ defined  as follows: for any $u\in H^2(\Om)$, 
\bb\l{eq:p_F_alternative}
\p^*F(u)\coloneqq -a^{ij}(\cdot)\p_{ij}+b^{i}(\cdot,\a^u(\cdot),\b^u(\cdot))\p_i +c(\cdot,\a^u(\cdot),\b^u(\cdot)),
\ee
where  $\a^u:\Om\to \bA$
and $\b^u:\Om\to \bB$ are any  measurable functions satisfying for all $x\in \Om$ that
\begin{align}\l{eq:measurable_ab}
\begin{split}
\a^u(x)&\in  \arg \max_{\a\in \A}\bigg[\min_{\b\in \bB}\left(b^i(x,\a,\b)\p_i u(x)+c(x,\a,\b)u(x)-f(x,\a,\b)\right)\bigg], \\
\b^u(x)&\in \argmin_{\b\in \bB}\left(b^i(x,\a^u(x),\b)\p_i u(x)+c(x,\a^u(x),\b)u(x)-f(x,\a^u(x),\b)\right).
\end{split}
\end{align}
\end{Corollary}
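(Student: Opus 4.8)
The idea is that this corollary is essentially a re‑packaging of Theorem~\ref{thm:semismooth_F} via the extension Lemma~\ref{lemma:b_ext}: I will show that, at every $v\in H^2(\Om)$, the set of operators of the form \eqref{eq:p_F_alternative} coincides with the set of operators of the form \eqref{eq:p_F}. Once this is done, semismoothness of $F$ with respect to \eqref{eq:p_F_alternative}, together with nonemptiness of its image, is immediate from Theorem~\ref{thm:semismooth_F}, since both are properties of the underlying set‑valued map only.

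\emph{One inclusion: every operator of the form \eqref{eq:p_F} is of the form \eqref{eq:p_F_alternative}.} Fix $v\in H^2(\Om)$, a measurable $\a:\Om\to\bA$ satisfying \eqref{eq:measurable_a}, and a jointly measurable $\b^v:\Om\t\bA\to\bB$ satisfying \eqref{eq:b^u}. Put $\a^v\coloneqq\a$ and $\hat\b^v(x)\coloneqq\b^v(x,\a(x))$. Then $\hat\b^v:\Om\to\bB$ is measurable: since $\bA$ is finite by (H.\ref{assum:D}) and $\a$ is measurable, $\hat\b^v$ agrees with the measurable section $\b^v(\cdot,\a_i)$ on the measurable set $\{\a=\a_i\}$ for each $\a_i\in\bA$. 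Evaluating the argmin property \eqref{eq:b^u} at $\a=\a(x)$ shows that the pair $\a^v,\hat\b^v$ satisfies \eqref{eq:measurable_ab}. Because $\hat\b^v(x)=\b^v(x,\a^v(x))$ for all $x\in\Om$, the operator \eqref{eq:p_F} associated with $\a$ and $\b^v$ is literally the operator \eqref{eq:p_F_alternative} associated with $\a^v$ and $\hat\b^v$. In particular the image of \eqref{eq:p_F_alternative} is nonempty on $H^2(\Om)$, the image of \eqref{eq:p_F} being nonempty by Theorem~\ref{thm:semismooth_F}.

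\emph{The reverse inclusion: every operator of the form \eqref{eq:p_F_alternative} is of the form \eqref{eq:p_F}.} Fix $v\in H^2(\Om)$ and measurable $\a^v:\Om\to\bA$, $\b^v:\Om\to\bB$ satisfying \eqref{eq:measurable_ab}. I apply Lemma~\ref{lemma:b_ext} with $h=v$, $h_i=\p_i v$ (measurable, as $v\in H^2(\Om)$), $\a^h=\a^v$ and $\b^h=\b^v$; the second line of \eqref{eq:measurable_ab} is precisely the hypothesis \eqref{eq:b_h}, so the lemma yields a jointly measurable $\tilde\b^v:\Om\t\bA\to\bB$ with $\b^v(x)=\tilde\b^v(x,\a^v(x))$ for all $x\in\Om$ and such that $\tilde\b^v$ satisfies \eqref{eq:b^u}. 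Since $\a^v$ satisfies \eqref{eq:measurable_a} (the first line of \eqref{eq:measurable_ab}), the operator \eqref{eq:p_F} formed with $\a=\a^v$ and $\b^v=\tilde\b^v$ belongs to the generalized differential of Theorem~\ref{thm:semismooth_F} at $v$; substituting $\tilde\b^v(x,\a^v(x))=\b^v(x)$ identifies it with the operator \eqref{eq:p_F_alternative} we started from.

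\emph{Conclusion.} The two set‑valued maps coincide at every point of $H^2(\Om)$, so the continuity of $F$ and the bound $\sup_{M\in\p^*F(v)}\|F(v)-F(u)-M(v-u)\|_{L^2(\Om)}=o(\|v-u\|_{H^2(\Om)})$ furnished by Theorem~\ref{thm:semismooth_F} hold verbatim with $\p^*F$ defined by \eqref{eq:p_F_alternative}, which is the assertion. I anticipate no genuine obstacle beyond this bookkeeping; the only steps needing a moment of care are the measurability of $x\mapsto\b^v(x,\a(x))$ (via finiteness of $\bA$) and the correct invocation of Lemma~\ref{lemma:b_ext}, which does all the substantive work of passing from a control defined on $\Om$ to one defined on $\Om\t\bA$.
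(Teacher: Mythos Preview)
Your proof is correct and follows essentially the same approach as the paper: the key step in both is invoking Lemma~\ref{lemma:b_ext} to extend $\b^v$ to a jointly measurable function on $\Om\t\bA$, thereby identifying the operator \eqref{eq:p_F_alternative} with a member of the generalized differential from Theorem~\ref{thm:semismooth_F}. The paper's argument is slightly more economical---it only proves your ``reverse inclusion'' and obtains nonemptiness of \eqref{eq:p_F_alternative} directly via measurable selection (Remark~\ref{rmk:cara} and Theorem~\ref{thm:measurable_selection}) rather than through the other inclusion you supply---but the substantive content is identical.
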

\begin{proof}
Let $u\in H^2(\Om)$, and let $\a^u$ and $\b^u$ be given measurable functions satisfying \eqref{eq:measurable_ab}
(see Remark \ref{rmk:cara} and Theorem \ref{thm:measurable_selection} for the existence of such measurable functions). 
Then by using Lemma \ref{lemma:b_ext}, we know there exists a jointly measurable function 
$\tilde{\b}^u:\Om\t \bA\to \bB$ such that  
 $\tilde{\b}^u$ satisfies \eqref{eq:b^u} for  all $(x,\a)\in \Om\t \bA$,
  and 
 $\tilde{\b}^u(x,\a^u(x))=\b^u(x)$ for all $x\in \Om$.
Hence 
we  see the linear operator defined in \eqref{eq:p_F_alternative} is equal to the following operator
$$
-a^{ij}(\cdot)\p_{ij}+b^{i}(\cdot,\a^u(\cdot),\tilde{\b}^u(\cdot,\a^u(\cdot)))\p_i +c(\cdot,\a^u(\cdot),\tilde{\b}^u(\cdot,\a^u(\cdot)))\in \cL(H^2(\Om), L^2(\Om)),
$$
which is a generalized differential of the HJBI operator $F$ at $u$ due to Theorem \ref{thm:semismooth_F}.
\end{proof}
\color{black}

The above characterization of the generalized differential of the HJBI operator enables us to demonstrate the superlinear convergence of Algorithm \ref{alg:pi} by reformulating it as a semismooth Newton method for an operator equation.


\begin{Theorem}\l{thm:pi_superlinear}
Suppose  (H.\ref{assum:D}) holds and let  $u^*\in H^2(\Om)$ be a strong solution to the Dirichlet problem  \eqref{eq:D}. Then there exists a neighborhood $\cN$ of $u^*$, such that for all $u^0\in \cN$,   Algorithm  \ref{alg:pi} either terminates with $u^k=u^*$ for some $k\in \N$, or generates a sequence  $\{u^k\}_{k\in \N}$ that converges $q$-superlinearly to $u^*$ in  $H^2(\Om)$, i.e.,  $\lim_{k\to \infty}\|u^{k+1}-u^*\|_{H^2(\Om)}/\|u^{k}-u^*\|_{H^2(\Om)}=0$.
\end{Theorem}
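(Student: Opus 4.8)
The plan is to recognise Algorithm~\ref{alg:pi} as an \emph{exact} semismooth Newton iteration for the operator equation $\Phi(u)=0$, where $\Phi\colon H^2(\Om)\to L^2(\Om)\t H^{3/2}(\p\Om)$ is defined by $\Phi(u)\coloneqq(F(u),\tau u-g)$, $F$ being the HJBI operator~\eqref{eq:hjb} and $\tau$ the trace operator. First I would verify the algebraic identity between the two iterations. Given the iterate $u^k$, the measurable feedback laws $\a^k,\b^k$ chosen in~\eqref{eq:a_k}--\eqref{eq:b_k} are admissible selections in the sense of~\eqref{eq:measurable_ab}, so by Corollary~\ref{cor:semismooth_F} the elliptic operator $M_k\coloneqq -a^{ij}\p_{ij}+b^i_k\p_i+c_k$ lies in $\p^*F(u^k)$ and $N_k\coloneqq(M_k,\tau)$ lies in the associated generalized differential of $\Phi$ at $u^k$. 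Using~\eqref{eq:maxmin} one has $F(u^k)=-a^{ij}\p_{ij}u^k+b^i_k\p_iu^k+c_ku^k-f_k=M_ku^k-f_k$, so the linear problem~\eqref{eq:linear} together with $\tau u^{k+1}=g$ is precisely $N_k(u^{k+1}-u^k)=-\Phi(u^k)$, i.e.\ $u^{k+1}=u^k-N_k^{-1}\Phi(u^k)$; here $N_k$ is boundedly invertible since~\eqref{eq:linear} is well posed in $H^2(\Om)$ (Theorem~\ref{thm:D_regularity}).

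With this reformulation, the local $q$-superlinear convergence follows from the standard argument for semismooth Newton methods (see, e.g., \cite{ulbrich2011}) once three ingredients are in place. First, $\Phi$ is semismooth at $u^*$ with respect to this generalized differential: since $\tau$ is bounded and affine, for $s\in H^2(\Om)$ and $N=(M,\tau)\in\p^*\Phi(u^*+s)$ the boundary component cancels exactly, $\Phi(u^*+s)-\Phi(u^*)-Ns=(F(u^*+s)-F(u^*)-Ms,\,0)$, whose norm is $o(\|s\|_{H^2(\Om)})$ by the semismoothness of $F$ from Corollary~\ref{cor:semismooth_F}. Second, $\Phi(u^*)=0$ by hypothesis. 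Third --- the crucial point --- the generalized differential is \emph{uniformly} boundedly invertible: there is a constant $C$, depending only on $n,\Om,\lam,\|a^{ij}\|_{C(\bar\Om)}$ and the $L^\infty$-bounds of $b^i,c$, such that $\|N^{-1}\|_{\cL(L^2(\Om)\t H^{3/2}(\p\Om),\,H^2(\Om))}\le C$ for \emph{every} $u\in H^2(\Om)$ and \emph{every} $N\in\p^*\Phi(u)$. Indeed, every element of $\p^*F(u)$ is of the form $-a^{ij}\p_{ij}+\tilde b^i\p_i+\tilde c$ with $\|\tilde b^i\|_{L^\infty(\Om)}\le\|b^i\|_{L^\infty(\Om\t\bA\t\bB)}$ and $0\le\tilde c\le\|c\|_{L^\infty(\Om\t\bA\t\bB)}$, the lower bound using $c\ge0$ from~(H.\ref{assum:D}); the linear elliptic theory (Theorem~\ref{thm:D_regularity}) provides, for this whole family of coefficients, a unique strong solution together with an a priori bound $\|u\|_{H^2(\Om)}\le C(\|f\|_{L^2(\Om)}+\|g\|_{H^{3/2}(\p\Om)})$ whose constant does not depend on the particular coefficients. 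This is the only step that uses the special structure of $G$, namely that it carries no second-order terms and has a nonnegative zeroth-order coefficient.

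To close the argument, using $\Phi(u^*)=0$ write $u^{k+1}-u^*=-N_k^{-1}\big(\Phi(u^k)-\Phi(u^*)-N_k(u^k-u^*)\big)$; combining the uniform bound of the third ingredient with the semismoothness estimate applied to $s=u^k-u^*$ gives $\|u^{k+1}-u^*\|_{H^2(\Om)}\le C\,r\big(\|u^k-u^*\|_{H^2(\Om)}\big)\,\|u^k-u^*\|_{H^2(\Om)}$ for some nondecreasing function $r$ with $r(t)\to0$ as $t\to0^+$. I would then take $\cN$ to be a ball of radius $\delta$ about $u^*$ with $C\,r(\delta)\le\tfrac12$: an induction shows that $u^0\in\cN$ forces $u^k\in\cN$ and $\|u^{k+1}-u^*\|_{H^2(\Om)}\le\tfrac12\|u^k-u^*\|_{H^2(\Om)}$ for all $k$. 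If at some step $u^{k+1}=u^k$, then $M_ku^k=f_k$, hence $F(u^k)=0$ and $\tau u^k=g$, so $u^k=u^*$ by Proposition~\ref{prop:wp_d} and the algorithm terminates as stated; otherwise $u^k\ne u^*$ for every $k$, hence $u^k\to u^*$ and $\|u^{k+1}-u^*\|_{H^2(\Om)}/\|u^k-u^*\|_{H^2(\Om)}\le C\,r\big(\|u^k-u^*\|_{H^2(\Om)}\big)\to0$, which is the asserted $q$-superlinear convergence.

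I expect the main obstacle to be the third ingredient --- the \emph{uniform} bounded invertibility of the generalized differential --- which requires an $H^2$-a priori estimate for the linearized Dirichlet problems that is uniform over the infinite, iterate-dependent family of lower-order coefficients arising along the iteration, and is exactly where one exploits that $G$ is affine in $(u,\nabla u)$ with no second-order dependence and a sign-definite zeroth-order part. A secondary technical point, already handled via Corollary~\ref{cor:semismooth_F} and Lemma~\ref{lemma:b_ext}, is the measurability of the selections $\a^k,\b^k$ required to make $M_k$, and hence the Newton step, well defined.
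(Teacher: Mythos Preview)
Your proposal is correct and follows essentially the same route as the paper: reformulate Algorithm~\ref{alg:pi} as a semismooth Newton method for $\Phi(u)=(F(u),\tau u-g)=0$, invoke Corollary~\ref{cor:semismooth_F} for the semismoothness of $F$, and use Theorem~\ref{thm:D_regularity} for the uniform bounded invertibility of the generalized differential. The only difference is cosmetic --- the paper cites \cite[Theorem~3.13]{ulbrich2011} (see also Theorem~\ref{thm:quasi-Newton}) for the final local convergence, whereas you unroll that standard argument explicitly.
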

\begin{proof}
Note that the Dirichlet problem  \eqref{eq:D}  can be written as an operator equation $\tilde{F}(u)=0$ with the following operator
$$
\tilde{F}:u\in H^2(\Om)\to  (F(u),\tau u-g)\in L^2(\Om)\t H^{3/2}(\p\Om),
$$
where $F$ is the HJBI operator defined as in \eqref{eq:hjb}, and $\tau:H^2(\Om)\to H^{3/2}(\p\Om)$ is the trace operator. Moreover, one can directly check that given an iterate $u^k\in H^2(\Om)$, $k\ge 0$, the next iterate $u^{k+1}$ solves
the following  Dirichlet problem:
$$
L_k(u^{k+1}-u^k)=-F(u^k),\q \textnormal{in $\Om$};\q \tau (u^{k+1}-u^k)=-(\tau u^k-g), \q \textnormal{on $\p\Om$.}
$$
with the differential operator  $L_k\in \p^*F(u^k)$ 
defined as in \eqref{eq:p_F_alternative}.
%
Since $F:H^2(\Om)\to L^2(\Om)$ is $ \p^*F$-semismooth (see Corollary \ref{cor:semismooth_F}) and $\tau\in \cL(H^2(\Om), H^{3/2}(\p\Om))$, we can conclude that  Algorithm  \ref{alg:pi} is in fact a semismooth Newton method for solving the operator equation $\tilde{F}(u)=0$.

Note that the boundedness of coefficients and the classical   theory of elliptic regularity (see Theorem \ref{thm:D_regularity}) imply that under condition (H.\ref{assum:D}), there exists a constant $C>0$, such that 
for any  $u\in H^2(\Om)$ and any  $L\in \p^*F(u)$, the inverse operator $(L,\tau)^{-1}:L^2(\Om)\t H^{3/2}(\p\Om) \to H^2(\Om)$ is well-defined, and the operator norm $\|(L,\tau)^{-1}\|$ is bounded by $C$, uniformly in $u\in H^2(\Om)$.
Hence one can conclude from \cite[Theorem 3.13]{ulbrich2011} (see also Theorem \ref{thm:quasi-Newton}) that the iterates $\{u^k\}_{k\in \N}$ converges superlinearly to $u^*$ in a neighborhood $\cN$ of $u^*$.
%
%
%
%
\end{proof}

The next theorem  strengthens Theorem \ref{thm:pi_superlinear}, and establishes a novel global convergence result of 
Algorithm \ref{alg:pi} applied to  the Dirichlet problem  \eqref{eq:D}, which subsequently provides a constructive proof for the existence of solutions to \eqref{eq:D}. The following additional condition is essential for our proof of the global convergence of  Algorithm \ref{alg:pi}:

\begin{Assumption}\l{assum:discount}
Let the function  $c$ in (H.\ref{assum:D}) be given as:  $c(x,\a,\b)=\bar{c}(x,\a,\b)+\ul{c}_0$, for all $(x,\a,\b)\in \Om\t \bA\t\bB$, where   $\ul{c}_0$ is a sufficiently large constant, depending on $\Om$, $\{a^{ij}\}_{i,j=1}^n$, $\{b^i\}_{i=1}^n$ and  $\|\bar{c}\|_{L^\infty(\Om\t \bA\t\bB)}$.
\end{Assumption}

In practice, (H.\ref{assum:discount}) can be satisfied  
if  \eqref{eq:D} arises from an infinite-horizon stochastic game with a large  discount factor (see e.g.~\cite{buckdahn2016}),
or 
if \eqref{eq:D} stems from an implicit (time-)discretization of parabolic HJBI equations  with a small time stepsize.

\color{black}

\begin{Theorem}\l{thm:pi_global}
Suppose  (H.\ref{assum:D}) and \textcolor{blue}{(H.\ref{assum:discount})} hold, then the Dirichlet problem  \eqref{eq:D} admits a unique strong solution $u^*\in H^2(\Om)$. Moreover, for any initial guess $u^0\in H^2(\Om)$, Algorithm  \ref{alg:pi} either terminates with $u^k=u^*$ for some $k\in \N$,  or generates a sequence  $\{u^k\}_{k\in \N}$ that converges $q$-superlinearly to $u^*$ in  $H^2(\Om)$, i.e., $\lim_{k\to \infty}\|u^{k+1}-u^*\|_{H^2(\Om)}/\|u^{k}-u^*\|_{H^2(\Om)}=0$.
\end{Theorem}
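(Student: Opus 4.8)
The strategy is to establish global convergence by interpreting Algorithm \ref{alg:pi} as a fixed point iteration in $H^2(\Om)$ and then combine this with the local superlinear rate from Theorem \ref{thm:pi_superlinear}. First I would introduce the solution operator $\Phi: H^2(\Om)\to H^2(\Om)$ of the policy iteration: given $v\in H^2(\Om)$, select measurable controls $\a^v, \b^v$ satisfying \eqref{eq:measurable_ab} (via Remark \ref{rmk:cara} and the measurable selection theorem), form the linear operator $L_v \in \p^*F(v)$ as in \eqref{eq:p_F_alternative}, and let $\Phi(v)$ be the unique strong solution $u\in H^2(\Om)$ of $L_v u = f_v$ in $\Om$, $\tau u = g$ on $\p\Om$. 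Under (H.\ref{assum:D}) this linear Dirichlet problem is well-posed by elliptic regularity (Theorem \ref{thm:D_regularity}), with $\|\Phi(v)\|_{H^2(\Om)}$ bounded by $C(\|f_v\|_{L^2(\Om)}+\|g\|_{H^{3/2}(\p\Om)})$ and $C$ uniform in $v$. Thus $u^{k+1}=\Phi(u^k)$ and, crucially, any fixed point of $\Phi$ is a strong solution of \eqref{eq:D}, using the maxmin identity \eqref{eq:maxmin}.

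The central step is to show $\Phi$ is a contraction on $H^2(\Om)$ \emph{provided} $\ul{c}_0$ is large enough, which is exactly where (H.\ref{assum:discount}) enters. Given $v_1, v_2\in H^2(\Om)$, write $w=\Phi(v_1)-\Phi(v_2)$, which satisfies a linear Dirichlet problem with zero boundary data and right-hand side controlled by the difference of the coefficient data. Because the Hamiltonian involves only \emph{low-order} terms (no second-order dependence on the control) and the coefficients $b^i, c, f$ are uniformly bounded, the maxmin inequality used in the proof of Proposition \ref{prop:wp_d} gives a pointwise bound of the form $|L_{v_1}\Phi(v_1) - L_{v_1}\Phi(v_2)| \lesssim |v_1-v_2| + |\nabla v_1 - \nabla v_2|$ on the relevant sets, so that $w$ solves $-a^{ij}\p_{ij}w + \tilde b^i\p_i w + \tilde c\, w = R$ with $\|R\|_{L^2}\le C_0\|v_1-v_2\|_{H^2(\Om)}$ for $C_0$ depending only on $\|b^i\|_\infty, \|\nabla\|$-embedding constants, etc., and with $\tilde c \ge \ul{c}_0 \ge 0$. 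The key point is the a priori estimate for this linear problem: testing against $w$ and using the ellipticity constant $\lambda$ together with the large zeroth-order coefficient $\ul c_0$ yields $\|w\|_{H^2(\Om)} \le \kappa(\ul c_0)\,\|v_1-v_2\|_{H^2(\Om)}$ where $\kappa(\ul c_0)\to 0$ as $\ul c_0\to\infty$ — this is the standard "large discount" mechanism, and choosing $\ul c_0$ large enough (depending on $\Om$, $\{a^{ij}\}$, $\{b^i\}$, $\|\bar c\|_\infty$ exactly as in (H.\ref{assum:discount})) makes $\kappa<1$. By the Banach fixed point theorem, $\Phi$ has a unique fixed point $u^*\in H^2(\Om)$, the iterates $u^k=\Phi^k(u^0)$ converge to $u^*$ in $H^2(\Om)$ from any $u^0$, and $u^*$ is a (the, by Proposition \ref{prop:wp_d}) strong solution of \eqref{eq:D}.

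Finally, to upgrade linear global convergence to $q$-superlinear convergence, I would invoke Theorem \ref{thm:pi_superlinear}: there is a neighborhood $\cN$ of $u^*$ on which Algorithm \ref{alg:pi} either terminates finitely with $u^k=u^*$ or converges $q$-superlinearly. Since $u^k\to u^*$ in $H^2(\Om)$ by the contraction argument, there is $K$ with $u^K\in\cN$; restarting the analysis from $u^K$ gives the superlinear rate (and the termination alternative) for the tail of the sequence, hence for the whole sequence. Uniqueness of $u^*$ follows either from the contraction property or directly from Proposition \ref{prop:wp_d}.

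\textbf{Main obstacle.} The delicate part is the contraction estimate for $\Phi$: one must carefully track how the constant in the $H^2$ a priori estimate for the linear problem depends on $\ul c_0$ and show it decays. This requires a quantitative elliptic estimate — not merely well-posedness but a bound of the form $\|w\|_{H^2}\le \kappa(\ul c_0)\|R\|_{L^2}$ with $\kappa(\ul c_0)\to 0$ — which one gets by combining the energy estimate (where $\ul c_0$ directly dominates the $L^2$ norm of $w$ and, via interpolation, $\nabla w$) with the full $H^2$ Calderón–Zygmund estimate, being careful that the $H^2$ estimate's constant does not itself blow up with $\ul c_0$. Handling the measurable selection of the controls so that $\Phi$ is genuinely well-defined as a map (independent of the selection, up to the a priori bound) is a second, more technical, point, but it is already essentially covered by Corollary \ref{cor:semismooth_F} and Lemma \ref{lemma:b_ext}.
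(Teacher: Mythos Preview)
Your overall strategy --- establish global $H^2$-convergence by a contraction-type argument exploiting (H.\ref{assum:discount}), then upgrade to $q$-superlinear via Theorem \ref{thm:pi_superlinear} --- matches the paper's. However, the central contraction estimate you sketch does not hold as stated, and this is a genuine gap rather than a technicality.

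You claim that for $w=\Phi(v_1)-\Phi(v_2)$ the source term $R$ satisfies $\|R\|_{L^2}\le C_0\|v_1-v_2\|_{H^2}$. But $L_{v_1}w = f_{v_1}-L_{v_1}\Phi(v_2) = \ell(\cdot,(\Phi(v_2),\nabla\Phi(v_2)),\a^{v_2},\b^{v_2})-\ell(\cdot,(\Phi(v_2),\nabla\Phi(v_2)),\a^{v_1},\b^{v_1})$, i.e.\ the difference of the linear cost at the \emph{same} state $\Phi(v_2)$ under two different controls. Since the optimal selections $\a^{v_j},\b^{v_j}$ can jump discontinuously, the coefficient differences $b^i_{v_1}-b^i_{v_2}$, $c_{v_1}-c_{v_2}$, $f_{v_1}-f_{v_2}$ are only $O(1)$, not $O(|v_1-v_2|)$; the maxmin inequality from Proposition \ref{prop:wp_d} does not help here because it compares $G$ at two different \emph{states}, not two different \emph{controls}. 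Consequently $\Phi$ is not a contraction on $H^2(\Om)$ in the straightforward sense.

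The paper circumvents this by proving instead that \emph{consecutive differences} decay geometrically. Using the optimality of $(\a^k,\b^k)$ at $u^k$, the linear equation is rewritten as
\[
-a^{ij}\p_{ij}u^{k+1}+\ul{c}_0 u^{k+1}+b^i_k\p_i(u^{k+1}-u^k)+\bar{c}_k(u^{k+1}-u^k)+\bar{G}(\cdot,u^k,\nabla u^k)=0,
\]
so that subtracting the analogous identity at step $k-1$ produces a right-hand side bounded by $C(\|u^{k+1}-u^k\|_{H^1}+\|u^k-u^{k-1}\|_{H^1})$: the problematic ``$\Phi(v)-v$'' terms become exactly $u^{k+1}-u^k$ and $u^k-u^{k-1}$ because $u^k=\Phi(u^{k-1})$. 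Combining the uniform estimate $\|u\|_{H^2}+\gamma\|u\|_{L^2}\le C\|-a^{ij}\p_{ij}u+\gamma u\|_{L^2}$ (Gilbarg--Trudinger, with $C$ independent of $\gamma$) and the interpolation $\|u\|_{H^1}\le\eps\|u\|_{H^2}+C\eps^{-1}\|u\|_{L^2}$ then yields $\|u^{k+1}-u^k\|_{c'}\le\mu\|u^k-u^{k-1}\|_{c'}$ with $\mu<1$ for $\ul{c}_0$ large, hence $\{u^k\}$ is Cauchy. This is the missing mechanism; once you replace the contraction-of-$\Phi$ step by this Cauchy argument, the rest of your plan (passing to the limit, invoking Theorem \ref{thm:pi_superlinear} on the tail) goes through exactly as you describe.
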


\begin{proof}
If Algorithm \ref{alg:pi} terminates  in iteration $k$, we have $F(u^k)=L_ku^k-f_k=0$ and $\tau u^k=g$, from which, we obtain from  the uniqueness of strong solutions to  \eqref{eq:D} (Proposition \ref{prop:wp_d}) that $u^k=u^*$ is the strong solution to the Dirichlet problem  \eqref{eq:D}. Hence we shall assume without loss of generality that Algorithm  \ref{alg:pi}  runs infinitely.

We now establish the global convergence of Algorithm \ref{alg:pi} by first showing the iterates $\{u^k\}_{k\in \N}$ form a Cauchy sequence in $H^2(\Om)$. 
For each $k\ge 0$, we deduce from 
\eqref{eq:maxmin}
and (H.\ref{assum:discount}) that
$\tau u^{k+1}=g$ on $\p\Om$ and 
\begin{align}\l{eq:u_{k+1}}
&-a^{ij}\p_{ij}u^{k+1}+b^{i}_{k}\p_i u^{k+1}+c_{k}u^{k+1}-f_{k}=-a^{ij}\p_{ij}u^{k+1}+b^{i}_{k}\p_i u^{k+1}+(\bar{c}_{k}+\ul{c}_0)u^{k+1}-f_{k}\nb\\
&\q= -a^{ij}\p_{ij}u^{k+1}+\ul{c}_0u^{k+1}+b^{i}_{k}\p_i (u^{k+1}-u^{k})+\bar{c}_{k}(u^{k+1}-u^{k})+\bar{G}(\cdot,u^k,\nabla u^k)=0,
\end{align}
for a.e.~$x\in \Om$, where the function $\bar{c}_{k}(x)\coloneqq \bar{c}(x,\a^{k}(x),\b^{k}(x))$ for all $x\in \Om$, and the modified Hamiltonian is defined as:
\bb\l{eq:G_m}
\bar{G}(x,u,\nabla u)=\max_{\a\in \bA}\min_{\b\in\bB}\big(b^i(x,\a,\b)\p_i u(x)+\bar{c}(x,\a,\b)u(x)-f(x,\a,\b)\big).
\ee
Hence, by taking the difference of  equations corresponding to the indices $k-1$ and $k$, one can obtain that 
\begin{align}
-&a^{ij}\p_{ij}(u^{k+1}-u^k)+\ul{c}_{0}(u^{k+1}-u^{k})=-b^{i}_{k}\p_i (u^{k+1}-u^{k})-\bar{c}_{k}(u^{k+1}-u^{k})\nb\\
&+b^{i}_{k-1}\p_i (u^{k}-u^{k-1})+\bar{c}_{k-1}(u^{k}-u^{k-1})-[\bar{G}(\cdot,u^k,\nabla u^k)-\bar{G}(\cdot,u^{k-1},\nabla u^{k-1})],
\l{eq:difference}
\end{align}
for $x\in\Om$, and $\tau(u^{k+1}-u^k)=0$ on $\p\Om$. 

It has been proved in Theorem 9.14 of \cite{gilbarg1983} that, there exist positive constants $C$ and $\gamma_{0}$, depending only on $\{a^{ij}\}_{i,j=1}^n$ and $\Om$, such that it holds for all $u\in H^2(\Om)$ with $\tau u=0$, and for all $\gamma\ge\gamma_{0}$ that
$$
\|u\|_{H^2(\Om)}\le C\|-a^{ij}\p_{ij}u+\gamma u\|_{L^2(\Om)},
$$
which, together with the identity that $\gamma u=(-a^{ij}\p_{ij}u +\gamma u)+a^{ij}\p_{ij}u$ and the boundedness of $\{a^{ij}\}_{ij}$, implies that the same estimate also holds for $\|u\|_{H^2(\Om)}+\gamma \|u\|_{L^2(\Om)}$: 
$$
\|u\|_{H^2(\Om)}+\gamma\|u\|_{L^2(\Om)}\le C\|-a^{ij}\p_{ij}u+\gamma u\|_{L^2(\Om)}.
$$

Thus, by assuming $\ul{c}_0\ge \gamma_0$ and using  the boundedness of the coefficients, we can deduce from \eqref{eq:difference} that
\begin{align}
\|u^{k+1}-u^k&\|_{H^2(\Om)}+\ul{c}_0\|u^{k+1}-u^k\|_{L^2(\Om)}\le C\bigg(\|-b^{i}_{k}\p_i (u^{k+1}-u^{k})-\bar{c}_{k}(u^{k+1}-u^{k})\nb\\
&+b^{i}_{k-1}\p_i (u^{k}-u^{k-1})+\bar{c}_{k-1}(u^{k}-u^{k-1})-[\bar{G}(\cdot,u^k,\nabla u^k)-\bar{G}(\cdot,u^{k-1},\nabla u^{k-1})]\|_{L^2(\Om)}\bigg)\nb\\
&\le C\big(\|u^{k+1}-u^{k}\|_{H^1(\Om)}+\|u^{k}-u^{k-1}\|_{H^1(\Om)}\big), 
\end{align}
for some constant $C$ independent of $\ul{c}_0$ and the index $k$.

Now we apply the following interpolation inequality (see \cite[Theorem 7.28]{gilbarg1983}): there exists a constant $C$, such that for all $u\in H^2(\Om)$ and $\eps>0$, we have $\|u\|_{H^1(\Om)}\le \eps\|u\|_{H^2(\Om)}+C\eps^{-1}\|u\|_{L^2(\Om)}$.
Hence, for any given $\eps_1\in(0,1),\eps_2>0$, we have 
\begin{align*}
(1-\eps_1)\|u^{k+1}-u^k\|_{H^2(\Om)}+\ul{c}_0\|u^{k+1}-u^k\|_{L^2(\Om)}&\le  \eps_2\|u^{k}-u^{k-1}\|_{H^2(\Om)}+C\eps_1^{-1}\|u^{k+1}-u^{k}\|_{L^2(\Om)}\\
&\q +C\eps_2^{-1}\|u^{k}-u^{k-1}\|_{L^2(\Om)}.
\end{align*}
Then, by taking $\eps_1\in(0,1)$, $\eps_2<1-\eps_1$, and assuming that $\ul{c}_0$ satisfies  $(\ul{c}_0-C/\eps_1)/(1-\eps_1)\ge C/\eps_2^2$, we can obtain  for $c'=C/\eps_2^2$ that 
$$
\|u^{k+1}-u^k\|_{H^2(\Om)}+c'\|u^{k+1}-u^k\|_{L^2(\Om)}\le \f{\eps_2}{1-\eps_1}\big(\|u^{k}-u^{k-1}\|_{H^2(\Om)}+c'\|u^{k}-u^{k-1}\|_{L^2(\Om)}\big),
$$
which implies that $\{u^k\}_{k\in \N}$ is a Cauchy sequence with the norm $\|\cdot\|_{c'}\coloneqq\|\cdot\|_{H^2(\Om)}+c'\|\cdot\|_{L^2(\Om)}$. 

Since $\|\cdot\|_{c'}$ is equivalent to $\|\cdot\|_{H^2(\Om)}$ on $H^2(\Om)$, we can deduce that $\{u^k\}_{k\in \N}$ converges to some $\bar{u}$ in $H^2(\Om)$.
By passing $k\to \infty$ in \eqref{eq:u_{k+1}} and using Proposition \ref{prop:wp_d}, we can deduce that $\bar{u}=u^*$ is the unique strong solution  of \eqref{eq:D}. 
Finally, for a sufficiently large $K_0\in \N$, we can conclude the superlinear convergence of $\{u^k\}_{k\ge K_0}$ from Theorem \ref{thm:pi_superlinear}. 
\end{proof}

\color{black}

We end this section with an important remark that, if one of  the sets $\bA$ and  $\bB$ is a singleton, 
and $a^{ij}\in C^{0,1}(\bar{\Om})$ for all $i,j$,
then  Algorithm \ref{alg:pi} applied to the Dirichlet problem  \eqref{eq:D} is in fact  monotonically  convergent with an arbitrary initial guess. Suppose, for instance, that $\bA$ is a singleton, 
then for each $k\in \N\cup\{0\}$, we have  that
 $$
 0=L_ku^{k+1}-f_k\ge F(u^{k+1})=-L_{k+1}(u^{k+2}-u^{k+1}),\q \textnormal{for a.e.~$x\in \Om$}.
 $$
Hence we can deduce that  $w^{k+1}\coloneqq u^{k+1}-u^{k+2}$ is a weak subsolution to $L_{k+1} w=0$, i.e.,
$$
\int_\Om \bigg[a^{ij}\p_jw^{k+1}\p_i \phi+\big((\p_i a^{ij}+b_{k+1}^i)\p_i w^{k+1}+c_{k+1}w^{k+1}\big)\phi\bigg]\,dx\le 0, \q \fa \phi\ge 0, \; \phi\in C_0^1(\Om).
$$
Thus, the  weak maximal principle (see \cite[Theorem 1.3.7]{garroni2002}) and the fact that  $u^{k+1}-u^{k+2}=0$ a.e.~$x\in \p\Om$ (with respect to the surface measure),   leads to the estimate $\esssup_{\Om} u^{k+1}-u^{k+2}\le 0$, which consequently implies that $u^k\le u^{k+1}$ for all $k\in \N$ and a.e.~$x\in \Om$.

\color{black}

\section{Inexact policy iteration for HJBI Dirichlet problems}\l{sec:dpi_D}

Note that  at each policy iteration, Algorithm \ref{alg:pi} requires us to  obtain an exact solution to  a linear Dirichlet boundary value problem,  which is generally infeasible. Moreover, an accurate computation of numerical solutions to  linear Dirichlet boundary value problems  could be expensive, especially in a high-dimensional setting. In this section, we shall propose an inexact policy iteration algorithm for \eqref{eq:D}, where we compute an approximate solution to \eqref{eq:linear} by solving an optimization problem over a family of trial functions, while maintaining the   superlinear convergence of policy iteration. 

We shall make the following assumption on the trial functions of the optimization problem. 
\begin{Assumption}\l{assum:cF}
The  collections of trial functions $\{\cF_M\}_{M\in \N}$ satisfies the following properties: $\cF_M\subset \cF_{M+1}$ for all $M\in \N$, and $\cF=\{\cF_M\}_{M\in \N}$ is dense in $H^{2}(\Om)$.
\end{Assumption} 

It is clear that  (H.\ref{assum:cF}) is satisfied by any reasonable $H^2$-conforming finite element spaces (see e.g.~\cite{brenner1994}) and high-order polynomial spaces or   kernel-function spaces  used in global spectral methods (see e.g.~\cite{beard1997, beard1998, cheung2018, kalise2018, kalise2019}).  
We now demonstrate that (H.\ref{assum:cF}) can also be easily satisfied by the sets of multi-layer feedforward neural networks,
which provides  effective trial functions for high-dimensional problems.  Let us first recall the definition of a feedforward neural network. 

\begin{Definition}[Artificial neural networks]\l{def:DNN}
Let $L,N_0,N_1,\ldots, N_L\in \N$ be given constants,  and $\varrho:\R\to \R$ be a given  function. For each $l=1,\ldots, L$, let $T_l:\R^{N_{l-1}}\to \R^{N_l}$ be an affine function given as $T_l(x)=W_lx+b_l$ for some $W_l\in \R^{N_l\t N_{l-1}}$ and $b_l\in \R^{N_{l}}$. A function $F:\R^{N_0}\to \R^{N_L}$ defined as 
$$
F(x)=T_L\circ (\varrho\circ T_{L-1})\circ\cdots (\varrho\circ T_{1}), \q x\in \R^{N_0},
$$
is called a feedforward neural network.  Here the activation function $\varrho$ is applied componentwise. We shall refer the quantity $L$ as the depth of $F$, $N_1,\ldots N_{L-1}$ as the dimensions of the hidden layers, and $N_0,N_L$  as the dimensions of the input and  output layers, respectively. We also refer to the number of entries of $\{W_l,b_l\}_{l=1}^N$ as the complexity of $F$.
\end{Definition}

Let $\{L^{(M)}\}_{M\in \N}$, $\{N^{(M)}_1\}_{M\in \N},\ldots, \{N^{(M)}_{L^{(M)}-1}\}_{M\in \N}$ be some nondecreasing sequences of natural numbers, we define for each $M$ the set  $\cF_M$ of all neural networks with  depth  $L^{(M)}$, input dimension being equal to $n$,  output dimension being equal to 1, and dimensions of hidden layers being equal to $\{N^{(M)}_1,\ldots, N^{(M)}_{L^{(M)}-1}\}_{M\in \N}$. It is clear that if $L^{(M)}\equiv L$ for all $M\in \N$, then we have $\cF_M\subset \cF_{M+1}$. The following proposition is proved in \cite[Corollary 3.8]{hornik1990}, which shows neural networks with  one hidden layer are dense in  $H^2(\Om)$.

\begin{Proposition}
Let $\Om\subset \R^n$ be an open bounded starshaped domain, and $\varrho\in C^2(\R)$ satisfying $0<|D^l\varrho|_{L^1(\Om)}<\infty$ for all $l=1,2$. Then the family of all neural networks with depth $L=2$ is dense in $H^2(\Om)$.

\end{Proposition}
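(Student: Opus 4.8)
This is a Sobolev-space universal approximation statement, and the plan is to show directly that the linear span
\[
S:=\operatorname{span}\bigl\{\,x\mapsto\varrho(w\cdot x+b)\ :\ w\in\R^n,\ b\in\R\,\bigr\},
\]
which together with the output bias is exactly the family of depth-two scalar-output networks, is dense in $H^2(\Om)$. Since $\overline{\Om}$ is compact and $\varrho\in C^2$, every single-neuron function lies in $C^2(\overline{\Om})\subset H^2(\Om)$, so $S\subset H^2(\Om)$ and the statement is meaningful. Polynomials (equivalently, trigonometric polynomials) are dense in $H^2(\Om)$ for the class of domains assumed here --- e.g.\ by an $H^2$-extension off $\Om$ followed by Weierstrass approximation, or by periodising and truncating a Fourier series --- so it suffices to prove that every polynomial lies in $\overline{S}^{\,H^2(\Om)}$.

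I would reduce the $n$-dimensional problem to a one-dimensional one through \emph{ridge monomials}. For a unit vector $e\in\R^n$ and $k\ge0$, the ridge monomial $q_{e,k}(x):=(e\cdot x)^k$ is homogeneous of degree $k$; because the monomials $a\mapsto a^\beta$ ($|\beta|=k$) are linearly independent, finitely many choices of $e$ express any $x^\alpha$ with $|\alpha|=k$ as a linear combination of ridge monomials, so $\operatorname{span}\{q_{e,k}:e\in S^{n-1},\,k\ge0\}$ is the full polynomial space. Thus it is enough to place each $q_{e,k}$ in $\overline{S}^{\,H^2(\Om)}$. Fix $e$, let $I_e:=\{e\cdot x:x\in\overline{\Om}\}$ (a compact interval) and $p_k(s):=s^k$. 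If $p_k$ is approximated in $C^2(I_e)$ by functions $\psi_N(s)=\sum_j c_j\varrho(t_js+b_j)$, then $\Phi_N(x):=\psi_N(e\cdot x)=\sum_j c_j\varrho(t_j\,e\cdot x+b_j)$ is a depth-two network and, by the chain rule together with $|e|=1$,
\[
\|\Phi_N-q_{e,k}\|_{H^2(\Om)}\ \le\ C\,|\Om|^{1/2}\,\|\psi_N-p_k\|_{C^2(I_e)}\ \longrightarrow\ 0 ,
\]
so $q_{e,k}\in\overline{S}^{\,H^2(\Om)}$ (the case $k=0$ being the constant function, which lies in $S$ since $\varrho$ is non-constant).

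The remaining ingredient is the classical one-dimensional approximation theorem: if $\varrho\in C^2(\R)$ is not a polynomial, then $\operatorname{span}\{s\mapsto\varrho(ts+b):t,b\in\R\}$ is dense in $C^2(K)$ for every compact interval $K$. The hypotheses give the non-polynomiality: $|D^1\varrho|_{L^1}>0$ makes $\varrho$ non-constant and $|D^2\varrho|_{L^1}>0$ makes it non-affine, while $|D^2\varrho|_{L^1}<\infty$ rules out polynomials of degree $\ge2$ (whose second derivative is a nonzero polynomial, hence not integrable over the line). Combining this with the previous paragraph puts every polynomial in $\overline{S}^{\,H^2(\Om)}$, and polynomial density in $H^2(\Om)$ concludes. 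An equivalent route replaces ridge monomials by plane waves $x\mapsto e^{\mathrm{i}a\cdot x}$, approximating the profiles $s\mapsto e^{\mathrm{i}|a|s}$, and is closer to the argument behind \cite{hornik1990}.

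The step I expect to be the main obstacle is the one-dimensional $C^2$-density theorem just quoted. Upgrading from $C^0$-density of ridge combinations (the textbook universal approximation theorem) to $C^2$-density is delicate: the fast argument that differentiates $\varrho(ts+b)$ in $t$ at $t=0$ to manufacture all monomials requires $\varrho\in C^\infty$, whereas here $\varrho$ is only $C^2$. The standard fix, which I would adopt, is to mollify $\varrho$ to a $C^\infty$ non-polynomial function whose dilates and translates still lie in the $C^2$-closure of the span --- this uses $\varrho\in C^2$, via Riemann-sum approximation of the convolution integral --- and then run the $C^\infty$ argument on the mollification. The remaining items (the linear-algebra identity for ridge monomials, the chain-rule lifting estimate, and polynomial/trigonometric-polynomial density in $H^2(\Om)$) are routine.
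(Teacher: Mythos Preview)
The paper gives no proof of this proposition at all: it simply records the statement and cites \cite[Corollary 3.8]{hornik1990}. So there is no ``paper's own proof'' to compare against, only Hornik's original argument.

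Your sketch is a sound independent route and, taken as a whole, is correct. The reduction to one-dimensional $C^2$-approximation via ridge monomials is standard and well executed; your use of the hypotheses to force $\varrho$ non-polynomial (in particular, $\|D^2\varrho\|_{L^1(\R)}<\infty$ excluding degree $\ge 2$) is exactly right, and the mollification device to pass from $C^2$ to $C^\infty$ activations is the accepted way around the regularity gap you flag. Hornik's actual proof is closer to the plane-wave variant you mention at the end: he exploits the $l$-finiteness condition directly via a Fourier-analytic argument to approximate $e^{\mathrm{i}a\cdot x}$ and its derivatives, rather than going through polynomials and the Leshno--Pinkus non-polynomial criterion. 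Both routes land in the same place; yours is arguably more elementary but leans on polynomial density in $H^2(\Om)$, which for a merely starshaped (not necessarily Lipschitz) domain deserves a sentence of justification --- dilation about the star centre plus mollification works here and avoids invoking a Stein-type extension theorem.
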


Now we discuss how to approximate  the strong solutions of Dirichlet  problems by 
reformulating the equations into  optimization  problems over trial functions. The idea is similar to least squares finite-element methods (see e.g.~\cite{bochev2009}), and has been employed previously to develop numerical methods for PDEs based on neural networks (see e.g.~\cite{lagaris1998,berg2018,sirignano2018}). However, compared to \cite{berg2018,lagaris1998}, we do not impose additional constraints on the trial functions by requiring that the networks  exactly agree with the  boundary conditions, due to the lack of theoretical support that the constrained  neural networks are still dense in the solution space. Moreover, to ensure the convergence of solutions in the $H^2(\Om)$-norm, we include the $H^{3/2}(\p\Om)$-norm of the boundary data in the cost function, instead of the  $L^2(\p\Om)$-norm used in \cite{sirignano2018} (see Remark \ref{rmk:H3/2} for more details).

 For each $k\in \N\cup \{0\}$, let   $u^{k+1}\in H^2(\Om)$ be the unique  solution to the Dirichlet problem \eqref{eq:linear}: 
$$
L_k u-f_k=0, \; \textnormal{in $\Om$}; \q \tau u=g, \; \textnormal{on $\p\Om$},
$$
where $L_k$ and $f_k$ denote the linear elliptic operator and the source term in \eqref{eq:linear}, respectively. 
For each $M\in \N$, we shall consider the following optimization problems:
\bb\l{eq:J}
J_{k,M}\coloneqq \inf_{u\in \cF_M}J_k(u), \q \textnormal{with $J_k(u)=\|L_ku-f_k\|^2_{L^2(\Om)}+\|\tau u-g\|^2_{H^{3/2}(\p\Om)}$.}
\ee
The following result shows that the cost function $J_k$ provides  a computable indicator of the  error.

\begin{Proposition}\l{prop:J_conv}
Suppose (H.\ref{assum:D}) and (H.\ref{assum:cF}) hold. For each $k\in \N\cup\{0\}$ and $M\in \N$,  let $u^{k+1}\in H^2(\Om)$ be the unique solution to \eqref{eq:linear}, and $J_k, J_{k,M}$ be defined as in \eqref{eq:J}. Then there exist positive constants $C_1$ and $C_2$, such that we have for each $u\in H^2(\Om)$ and $k\in \N\cup\{0\}$ that
$$
C_1J_k(u)\le \|u-u^{k+1}\|^2_{H^2(\Om)}\le C_2 J_k(u).
$$
Consequently, it holds for each $k\in\N\cup\{0\}$ that $\lim_{M\to \infty} J_{k,M}=0$.
\end{Proposition}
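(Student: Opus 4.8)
The plan is to prove the two-sided estimate by exploiting the well-posedness and $H^2$-regularity of the linear Dirichlet problem \eqref{eq:linear}, together with the continuity of the operators appearing in $J_k$, while tracking carefully that all constants can be chosen uniformly in $k$.

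\textbf{Step 1: the upper bound $\|u-u^{k+1}\|^2_{H^2(\Om)}\le C_2 J_k(u)$.} Fix $k$ and $u\in H^2(\Om)$, and set $w\coloneqq u-u^{k+1}$. Since $u^{k+1}$ solves $L_ku^{k+1}-f_k=0$ in $\Om$ and $\tau u^{k+1}=g$ on $\p\Om$, the function $w$ satisfies $L_kw = L_ku-f_k$ in $\Om$ and $\tau w = \tau u - g$ on $\p\Om$. The operator $(L_k,\tau):H^2(\Om)\to L^2(\Om)\t H^{3/2}(\p\Om)$ is boundedly invertible by the classical elliptic regularity theory (Theorem \ref{thm:D_regularity}), and — this is the crucial point already used in the proof of Theorem \ref{thm:pi_superlinear}, where $L_k\in\p^*F(u^k)$ — the norm of the inverse is bounded by a constant $C$ \emph{independent of $k$}, because the coefficients $b^i_k,c_k$ of $L_k$ are bounded in $L^\infty(\Om)$ by constants depending only on $\|b^i\|_{L^\infty}, \|c\|_{L^\infty}$ and the ellipticity constant $\lambda$. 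Hence $\|w\|_{H^2(\Om)}\le C\big(\|L_ku-f_k\|_{L^2(\Om)}+\|\tau u-g\|_{H^{3/2}(\p\Om)}\big)$; squaring and using $(a+b)^2\le 2a^2+2b^2$ gives the upper bound with $C_2 = 2C^2$.

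\textbf{Step 2: the lower bound $C_1 J_k(u)\le \|u-u^{k+1}\|^2_{H^2(\Om)}$.} This is the easy direction: with $w=u-u^{k+1}$ as above, $L_ku-f_k = L_kw$ and $\tau u-g=\tau w$, so
\begin{align*}
J_k(u)=\|L_kw\|^2_{L^2(\Om)}+\|\tau w\|^2_{H^{3/2}(\p\Om)}
\le \big(\|L_k\|_{\cL(H^2,L^2)}^2+\|\tau\|_{\cL(H^2,H^{3/2})}^2\big)\|w\|^2_{H^2(\Om)}.
\end{align*}
Again $\|L_k\|_{\cL(H^2(\Om),L^2(\Om))}$ is bounded uniformly in $k$ by the boundedness of $\{a^{ij}\}$, $\{b^i_k\}$, $c_k$, and $\|\tau\|$ is a fixed constant; taking $C_1$ to be the reciprocal of that uniform bound yields the claim.

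\textbf{Step 3: $\lim_{M\to\infty}J_{k,M}=0$.} Fix $k$. Since $u^{k+1}\in H^2(\Om)$ and $\cF=\bigcup_M\cF_M$ is dense in $H^2(\Om)$ by (H.\ref{assum:cF}), there is a sequence $v_M\in\cF_M$ with $\|v_M-u^{k+1}\|_{H^2(\Om)}\to 0$. By the just-proved upper bound applied with $u=v_M$, $0\le J_{k,M}\le J_k(v_M)\le C_1^{-1}\|v_M-u^{k+1}\|^2_{H^2(\Om)}\to 0$; since $J_{k,M}$ is monotone nonincreasing in $M$ (because $\cF_M\subset\cF_{M+1}$) this forces $J_{k,M}\to 0$.

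\textbf{Main obstacle.} The only genuine subtlety is the uniformity of the constants $C_1,C_2$ in $k$: one must verify that the elliptic regularity estimate for $(L_k,\tau)^{-1}$ depends on the coefficients of $L_k$ only through $\lambda$ and the $L^\infty$-bounds on $b^i,c$ (which hold uniformly for all policies $(\a^k,\b^k)$ under (H.\ref{assum:D})), rather than through, say, moduli of continuity. This is exactly the uniform bound invoked in the proof of Theorem \ref{thm:pi_superlinear}, so it is available; everything else is a routine application of bounded invertibility and density.
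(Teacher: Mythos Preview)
Your proof is correct and follows essentially the same approach as the paper's: both arguments write $w=u-u^{k+1}$ as the solution of the linear Dirichlet problem $L_kw=L_ku-f_k$, $\tau w=\tau u-g$, invoke the elliptic regularity estimate (Theorem \ref{thm:D_regularity}) and the boundedness of $(L_k,\tau)$ with constants depending only on the $L^\infty$-bounds of the coefficients (hence uniform in $k$), and then conclude $J_{k,M}\to 0$ from the density of $\{\cF_M\}$ in $H^2(\Om)$. Your explicit emphasis on the uniformity of the constants in $k$ is a helpful clarification of the point the paper handles more tersely.
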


\begin{proof}
Let  $k\in \N\cup\{0\}$ and $u\in H^2(\Om)$. The  definition of $J_k(u)$ implies that $L_ku-f_k=f^e\in L^2(\Om)$, $\tau u-g= g^e\in H^{3/2}(\p\Om)$ and $J(u)= \|f^e\|^2_{L^2(\Om)}+\|g^e\|^2_{H^{3/2}(\p\Om)}$. Then, by using the  assumption  that $u^{k+1}$ solves \eqref{eq:linear}, we deduce that the residual term satisfies the following Dirichlet problem:
$$
L_k(u-u^{k+1})=f^e, \q \textnormal{in $\Om$}; \q \tau(u-u^{k+1})=g^e, \q \textnormal{on $\p\Om$}.
$$
Hence the boundedness of coefficients and the regularity theory of elliptic operators (see Theorem \ref{thm:D_regularity})  lead to the estimate that 
$$C_1(\|f^e\|^2_{L^2(\Om)}+\|g^e\|^2_{H^{3/2}(\p\Om)})\le \|u-u^{k+1}\|^2_{H^2(\Om)}\le C_2(\|f^e\|^2_{L^2(\Om)}+\|g^e\|^2_{H^{3/2}(\p\Om)}),$$
where the constants $C_1,C_2>0$ depend only on the  $L^\infty(\Om)$-norms of $a^{ij}, b^i_k,c_k,f_k$, which are independent of $k$. The above estimate, together with the facts that $\{\cF_M\}_{M\in \N}$ is dense in $H^2(\Om)$ and $\cF_M\subset \cF_{M+1}$, leads to the desired result that $\lim_{M\to \infty} J_{k,M}=0$.
\end{proof}

We  now  present the inexact policy iteration algorithm for the HJBI problem \eqref{eq:D}, where at each policy iteration, we solve the linear Dirichlet problem  within  a given accuracy. 
%

\begin{algorithm}[H]
\caption{Inexact policy iteration algorithm for Dirichlet problems.}
\label{alg:dpi_d}
\vspace{-3mm}
\begin{enumerate}
\item Choose a family of trial functions $\cF=\{\cF_M\}_{M\in\N}\subset H^2(\Om)$, an initial guess $u^0$ in $\cF$, a sequence $\{\eta_k\}_{k\in \N\cup\{0\}}$ of positive scalars, and set $k=0$.
\item Given the  iterate  $u^{k}$, update the  control laws   $\a^{k}$ and 
$\b^{k}$ by  \eqref{eq:a_k} and \eqref{eq:b_k}, respectively.
\item Find $u^{k+1}\in \cF$ such that\footnotemark
\bb\l{eq:J_error}
J_k(u^{k+1})=\|L_ku^{k+1}\!-f_k\|^2_{L^2(\Om)}\!+\|\tau u^{k+1}-g\|^2_{H^{3/2}(\p\Om)}\le \eta_{k+1}\min(\|u^{k+1}-u^k\|^2_{H^2(\Om)},\eta_0),
\ee
where  $L_k$ and $f_k$ denote the linear  operator and the source term in \eqref{eq:linear}, respectively.
\item If $\|u^{k+1}-u^k\|_{H^2(\Om)}=0$, then terminate with outputs $u^{k+1},\a^k$ and $\b^k$,
otherwise increment $k$ by one and go to step 2.
\end{enumerate}
\vspace{-3mm}
\end{algorithm}
\footnotetext{With a slight abuse of notation, we denote by $u^{k+1}$ the inexact solution to the Dirichlet problem \eqref{eq:linear}.}

\begin{Remark}
In practice, the evaluation of the squared residuals  $J_k$ in \eqref{eq:J_error} depends on the choice of trial functions.
For trial functions with linear architecture, e.g.\ if
$\{\cF_M\}_{M\in \N}$ are finite element spaces,  high-order polynomial spaces, and  kernel-function  spaces 
 (see \cite{brenner1994,cheung2018, kalise2018,kalise2019}),  one may evaluate the norms 
 by applying high-order quadrature rules to the basis functions involved.

 For trial functions with nonlinear architecture, such as feedforward neural networks, 
 we can replace the integrations in  $J_k$
 by 
  the empirical mean 
over suitable collocation points in $\Om$ and on  $\p\Om$,
 such as pseudorandom points or quasi-Monte Carlo points  
 (see Section \ref{sec:numerical}; see also \cite{lagaris1998,berg2018,sirignano2018}).
 In particular,
 due to the existence of local coordinate charts of the boundaries,
 we can 
evaluate the double integral in the definition of the $H^{3/2}(\p\Om)$-norm (see \eqref{eq:H1/23/2})
by first generating points in $\R^{2(n-1)}$
and then mapping the samples  onto $\p\Om\t \p\Om$.
The resulting empirical least-squares problem for the $k+1$-th policy iteration step (cf.~\eqref{eq:J}) 
can then be solved by stochastic gradient descent  (SGD) algorithms; see Section \ref{sec:numerical}.
We remark that, 
instead of pre-generating all the collocation points in advance,
one can  perform  gradient descent based
on a sequence of mini-batches of points generated at each SGD iteration.
This is particularly useful in higher dimensions, where many collocation points may be needed to cover the boundary,
and using mini-batches avoids having to evaluate functions at all collocation points in each iteration.
\end{Remark}

\color{black}

It is well-known (see e.g.~\cite{ulbrich2011,dontchev2012}) that the  residual term $\|u^{k+1}-u^k\|_{H^2(\Om)}$ is crucial for the superlinear convergence of inexact Newton methods.
%
This next theorem  establishes the global superlinear convergence of Algorithm \ref{alg:dpi_d}. 
\color{black}

\begin{Theorem}\l{thm:dpi_global}
Suppose  (H.\ref{assum:D}), \textcolor{blue}{(H.\ref{assum:discount})} and (H.\ref{assum:cF}) hold,  and $\lim_{k\to \infty}\eta_k=0$ in Algorithm  \ref{alg:dpi_d}. Let $u^*\in H^2(\Om)$ be the solution to the Dirichlet problem  \eqref{eq:D}. Then for any initial guess $u^0\in \cF$, Algorithm  \ref{alg:dpi_d} either terminates with $u^k=u^*$ for some $k\in \N$, or generates a sequence  $\{u^k\}_{k\in \N}$ that converges $q$-superlinearly to $u^*$ in  $H^2(\Om)$, i.e., $\lim_{k\to \infty}\|u^{k+1}-u^*\|_{H^2(\Om)}/\|u^{k}-u^*\|_{H^2(\Om)}=0$. Consequently, we have $\lim_{k\to\infty}(u^k,\p_iu^k,\p_{ij}u^k)(x)= (u^*,\p_iu^*,\p_{ij}u^*)(x)$  for a.e.~$x\in \Om$, and for all $i,j=1,\ldots, n$.
\end{Theorem}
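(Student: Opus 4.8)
The plan is to mirror the structure of the proof of Theorem \ref{thm:pi_global}, but now carrying the inexactness terms $\eta_k$ through the estimates. First I would dispose of the termination case: if Algorithm \ref{alg:dpi_d} stops at iteration $k$, then $\|u^{k+1}-u^k\|_{H^2(\Om)}=0$ forces, via \eqref{eq:J_error}, that $J_k(u^{k+1})=0$, hence $u^{k+1}$ solves \eqref{eq:linear} exactly with $\a^k,\b^k$; combined with \eqref{eq:maxmin} this gives $F(u^{k+1})=0$ and $\tau u^{k+1}=g$, so by Proposition \ref{prop:wp_d}, $u^{k+1}=u^*$. So assume the algorithm runs forever.

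The core is to show $\{u^k\}$ is Cauchy in $H^2(\Om)$. I would reuse the modified-Hamiltonian rewriting \eqref{eq:u_{k+1}}, but now with a residual: by Proposition \ref{prop:J_conv} (or rather the elliptic estimate behind it) and \eqref{eq:J_error}, the inexact iterate $u^{k+1}$ satisfies the linear Dirichlet problem \eqref{eq:linear} up to a source term $f^e_{k+1}\in L^2(\Om)$ and boundary defect $g^e_{k+1}\in H^{3/2}(\p\Om)$ with $\|f^e_{k+1}\|^2_{L^2}+\|g^e_{k+1}\|^2_{H^{3/2}(\p\Om)}=J_k(u^{k+1})\le \eta_{k+1}\|u^{k+1}-u^k\|^2_{H^2(\Om)}$. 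Taking the difference of the equations for indices $k-1$ and $k$ exactly as in \eqref{eq:difference}, but keeping the extra terms $f^e_{k+1}-f^e_k$ and boundary data $g^e_{k+1}-g^e_k$, and applying the resolvent estimate $\|w\|_{H^2(\Om)}+\gamma\|w\|_{L^2(\Om)}\le C(\|-a^{ij}\p_{ij}w+\gamma w\|_{L^2(\Om)}+\|\tau w\|_{H^{3/2}(\p\Om)})$ from Theorem 9.14 of \cite{gilbarg1983} (extended to nonzero boundary data via Theorem \ref{thm:D_regularity}), followed by the interpolation inequality \cite[Theorem 7.28]{gilbarg1983}, I expect to arrive at an estimate of the shape
\begin{align*}
\|u^{k+1}-u^k\|_{c'} \le \theta\,\|u^{k}-u^{k-1}\|_{c'} + C\big(\sqrt{\eta_{k+1}}\,\|u^{k+1}-u^k\|_{H^2(\Om)}+\sqrt{\eta_k}\,\|u^{k}-u^{k-1}\|_{H^2(\Om)}\big),
\end{align*}
with $\theta=\eps_2/(1-\eps_1)<1$ as in Theorem \ref{thm:pi_global} provided $\ul c_0$ is large enough. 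Since $\eta_k\to0$, for $k\ge K_0$ the coefficient $C\sqrt{\eta_{k+1}}$ can be absorbed into the left side and $C\sqrt{\eta_k}$ added to $\theta$, yielding a contraction factor $\theta'<1$ for $k$ large; hence $\{u^k\}_{k\ge K_0}$ is Cauchy in $\|\cdot\|_{c'}$, which is equivalent to $\|\cdot\|_{H^2(\Om)}$.

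Let $\bar u$ be the $H^2$-limit. Passing to the limit in the (inexact) equation for $u^{k+1}$: the residuals $f^e_{k+1},g^e_{k+1}\to0$ since $J_k(u^{k+1})\le\eta_{k+1}\eta_0\to0$; the controlled coefficients stay bounded in $L^\infty$; and $\nabla u^k\to\nabla\bar u$, $u^k\to\bar u$ in appropriate norms, so the nonlinear term $\bar G(\cdot,u^k,\nabla u^k)$ converges in $L^2(\Om)$ using the Carathéodory/continuity structure (as in Remark \ref{rmk:cara} and the Lipschitz bound on $G$ in the low-order arguments). Using \eqref{eq:maxmin} to undo the pointwise max-min, $\bar u$ solves $F(\bar u)=0$ with $\tau\bar u=g$, so $\bar u=u^*$ by Proposition \ref{prop:wp_d}. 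For the superlinear rate: once $u^k$ enters the neighbourhood $\cN$ of $u^*$ from Theorem \ref{thm:pi_superlinear}, I would invoke the inexact semismooth Newton theory (e.g.\ \cite[Theorem 3.13]{ulbrich2011}, or Theorem \ref{thm:quasi-Newton}) — the point being that \eqref{eq:J_error} controls the Newton residual $\|\tilde F(u^k)+L_k(u^{k+1}-u^k)\|$ by $\sqrt{\eta_{k+1}}\|u^{k+1}-u^k\|$, a forcing term tending to $0$, which together with the $\p^*F$-semismoothness of $F$ (Corollary \ref{cor:semismooth_F}) and the uniform bound on $\|(L,\tau)^{-1}\|$ gives $q$-superlinear convergence. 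Finally, $H^2(\Om)$-convergence implies, along a subsequence and then (by uniqueness of the limit) for the whole sequence, a.e.\ pointwise convergence of $u^k$ and of all weak derivatives up to order two, giving the last assertion.

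The main obstacle I anticipate is bookkeeping the inexactness cleanly: one must verify that the extra $\sqrt{\eta_k}$-weighted terms genuinely appear with an $H^2$-norm that can be absorbed (not an uncontrolled $H^2$-norm on the right with coefficient $\ge1$), which relies on $\eta_k\to0$ and on putting the boundary defect into the resolvent estimate correctly; and one must be careful that the constant $\ul c_0$ chosen for the contraction does not secretly depend on $k$ through the $\eta_k$'s — it does not, since the $L^\infty$-bounds on the coefficients are $k$-independent and the $\eta_k$-terms are handled separately for large $k$. The passage to the limit in the nonlinear term, while routine, also needs the embedding $H^2(\Om)\hookrightarrow W^{1,r}(\Om)$ used in Proposition \ref{prop:F1} to control $\nabla u^k$.
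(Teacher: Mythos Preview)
Your overall architecture is sound and close to the paper's, with one genuine gap and one stylistic difference worth noting.

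\textbf{The gap: pointwise convergence.} Your last step is wrong. $H^2(\Om)$-convergence of $\{u^k\}$ to $u^*$ gives $L^2(\Om)$-convergence of $u^k$, $\p_i u^k$, $\p_{ij}u^k$, and hence a.e.\ convergence \emph{along a subsequence}. But the argument ``by uniqueness of the limit, for the whole sequence'' does not work: $L^2$-convergence to a fixed limit does not imply full-sequence a.e.\ convergence (the typewriter sequence is the standard counterexample, and the subsequence principle ``every subsequence has a further subsequence converging a.e.\ to the same limit'' still does not yield full-sequence a.e.\ convergence). The paper instead exploits the \emph{rate}: $q$-superlinear convergence gives $\|u^k-u^*\|_{H^2(\Om)}\le C\gamma^k$ for any $\gamma\in(0,1)$, so $\sum_k\|u^k-u^*\|^2_{H^2(\Om)}<\infty$, and by monotone convergence the pointwise sum $\sum_k\big(|u^k-u^*|^2+\sum_{i,j}|\p_i u^k-\p_i u^*|^2+|\p_{ij}u^k-\p_{ij}u^*|^2\big)(x)$ is finite a.e., which forces the summands to tend to zero a.e.\ for the full sequence. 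You should replace your subsequence argument by this summability argument.

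\textbf{Different route for global convergence.} For the global (pre-superlinear) part you run the Cauchy-sequence machinery of Theorem \ref{thm:pi_global}, differencing the equations at indices $k$ and $k-1$ and absorbing the $\sqrt{\eta_k}$-terms for large $k$. The paper takes a shorter path: it differences the inexact equation for $u^{k+1}$ directly against the exact HJBI equation satisfied by $u^*$ (whose existence is already known from Theorem \ref{thm:pi_global}), obtaining a recursion for $\|u^{k+1}-u^*\|$ in terms of $\|u^k-u^*\|$ and deducing \emph{linear} convergence to $u^*$ in one step. Your route is correct too, but requires the extra limit-identification step; the paper's avoids that by working relative to the known solution from the outset. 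For the superlinear step the paper makes the quasi-Newton structure explicit by writing the inexactness as a rank-one perturbation $\delta L_k,\delta\tau_k$ of the Newton operator and then verifies the two hypotheses of Theorem \ref{thm:quasi-Newton}; your sketch of ``invoke inexact semismooth Newton theory with forcing term $\sqrt{\eta_{k+1}}\|u^{k+1}-u^k\|$'' is the same idea at a coarser resolution.
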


\begin{proof}
Let $u^0\in \cF$ be an arbitrary initial guess. 
We first show that Algorithm \ref{alg:dpi_d} is always well-defined.
For each $k\in \N\cup\{0\}$, if $u^k\in \cF$ is the strong solution to \eqref{eq:linear}, then we can choose $u^{k+1}=u^k$, which satisfies \eqref{eq:J_error} and terminates the algorithm. If $u^k$ does not solve \eqref{eq:linear}, the fact that  $\cF$ is dense in $H^2(\Om)$ enables us to find  $u^{k+1}\in \cF$  satisfying the criterion \eqref{eq:J_error}.

Moreover, one can clearly see from \eqref{eq:J_error}  that  if Algorithm \ref{alg:dpi_d}  terminates at  iteration $k$, then $u^k$ is the exact solution to the Dirichlet problem  \eqref{eq:D}. Hence in the sequel we shall assume without loss of generality that Algorithm  \ref{alg:dpi_d}  runs infinitely, i.e., $\|u^{k+1}-u^k\|_{H^2(\Om)}>0$ and $u^k\not =u^*$ for all $k\in \N\cup\{0\}$.
 
 We next show  the iterates  converge to $u^*$ in $H^2(\Om)$ by following similar arguments as those for Theorem \ref{thm:pi_global}. For each $k\ge 0$,  we can deduce from \eqref{eq:J_error} that there exists  $f^e_k\in L^2(\Om)$ and $g^e_k\in H^{3/2}(\p\Om)$ such that 
\bb\l{eq:linear_inexact}
L_ku^{k+1}-f_k=f^e_k, \q \textnormal{in $\Om$}; \q \tau u^{k+1}-g=g^e_k, \q \textnormal{on $\p\Om$},
\ee
and $J_k(u^{k+1})=\|f^e_k\|^2_{L^2(\Om)}+\|g^e_k\|^2_{H^{3/2}(\Om)}\le \eta_{k+1}(\|u^{k+1}-u^k\|^2_{H^2(\Om)})$ with $\lim_{k\to\infty}\eta_{k}= 0$. 
Then, by taking the difference between \eqref{eq:linear_inexact} and \eqref{eq:D}, we obtain that
\begin{align*}
-a^{ij}\p_{ij}(u^{k+1}-u^*)+\ul{c}_{0}(u^{k+1}-u^{*})&=-b^{i}_{k}\p_i (u^{k+1}-u^{k})-\bar{c}_{k}(u^{k+1}-u^{k})\nb\\
&-[\bar{G}(\cdot,u^k,\nabla u^k)-\bar{G}(\cdot,u^{*},\nabla u^{*})]+f_k^e, \q \textnormal{in $\Om$,}
\end{align*}
and $\tau (u^{k+1}-u^*)=g_k^e$ on $\p\Om$, where $\bar{G}$ is the modified Hamiltonian defined as in \eqref{eq:G_m}.
Then, by proceeding along the lines of  Theorem \ref{thm:pi_global}, we can obtain 
a positive constant $C$, independent of $\ul{c}_0$ and the  index $k$, such that 
\begin{align*}
&\|u^{k+1}-u^*\|_{H^2(\Om)}+\ul{c}_0\|u^{k+1}-u^*\|_{L^2(\Om)}\\
&\le  C\big(\|u^{k+1}-u^{*}\|_{H^1(\Om)}+\|u^{k+1}-u^{k}\|_{H^1(\Om)}+\|u^{k}-u^*\|_{H^1(\Om)}\big)+o(\|u^{k+1}-u^k\|_{H^2(\Om)})\\
&\le  C\big(\|u^{k+1}-u^*\|_{H^1(\Om)}+\|u^{k}-u^*\|_{H^1(\Om)}\big)+o(\|u^{k+1}-u^*\|_{H^2(\Om)}+\|u^{k}-u^*\|_{H^2(\Om)})
\end{align*}
as $k\to\infty$,  where the additional high-order terms are due to the residuals $f^e_k$ and $g^e_k$.
Then, by using the interpolation inequality and assuming $\ul{c}_0$ is sufficiently large, we can deduce that $\{u^k\}_{k\in \N}$ converge linearly to $u^*$ in  $H^2(\Om)$.

\color{black}

We  then  reformulate Algorithm \ref{alg:dpi_d} into a quasi-Newton method for the operator equation $\tilde{F}(u)=0$, with the operator $\tilde{F}: u\in H^2(\Om)\to (F(u),\tau u-g)\in L^2(\Om)\t H^{3/2}(\p\Om)$  defined  in the proof of Theorem \ref{thm:pi_superlinear}. 
Let   $H^2(\Om)^*$ denote the  strong dual space of $H^2(\Om)$, and  $\la \cdot, \cdot \ra$ denote the dual product on $H^2(\Om)^*\t H^2(\Om)$. For each $k\in \N\cup \{0\}$, by using the fact that $\|u^{k+1}-u^k\|_{H^2(\Om)}>0$, we can   choose  $w_k\in H^2(\Om)^*$ satisfying  $\la w_k, u^{k+1}-u^k\ra=-1$, and   introduce the following linear operators $\delta L_k\in \cL(H^2(\Om),  L^2(\Om))$ and $\delta \tau_k\in \cL(H^2(\Om),  H^{3/2}(\p\Om))$:
$$
\delta L_k: v\in H^2(\Om)\to \la w_k, v\ra f^e_k\in L^2(\Om)\q \textnormal{and}\q \delta \tau_k: v\in H^2(\Om)\to \la w_k, v\ra g^e_k\in H^{3/2}(\p\Om).
$$
Then, we can apply the identity $F(u^k)=L_ku^k-f_k$ and rewrite \eqref{eq:linear_inexact} as:
$$
(L_k+\delta L_k)(u^{k+1}-u^k)=-F(u^k), \q \textnormal{in $\Om$}; \q (\tau+\delta\tau_k) (u^{k+1}-u^k)=-(\tau u^k-g), \q \textnormal{on $\p\Om$},
$$
with $(L_k,\tau)\in \p^*\tilde{F}(u^k)$ as shown in Theorem \ref{thm:pi_superlinear}. Hence one can clearly see that \eqref{eq:linear_inexact} is precisely a Newton step with a perturbed operator for the  equation $\tilde{F}(u)=0$. 

Now we are ready to establish the superlinear convergence of $\{u^k\}_{k\in \N}$. For notational simplicity, in the subsequent analysis we shall denote by $Z\coloneqq L^2(\Om)\t  H^{3/2}(\p\Om)$ the Banach space with the usual product norm $\|z\|_Z\coloneqq \|z_1\|_{L^2(\Om)}+\|z_2\|_{H^{3/2}(\p\Om)}$ for each $z=(z_1,z_2)\in Z$.
By using the semismoothess of $\tilde{F}: H^2(\Om)\to  Z$ (see Theorem \ref{thm:pi_superlinear}) and  the strong convergence of $\{u^k\}_{k\in\N}$ in $H^2(\Om)$,  we can directly infer from Theorem \ref{thm:quasi-Newton} that it remains to show that there exists a open neighborhood $V$ of $u^*$, and a constant $L>0$, such that 
\begin{align}\l{eq:criterion1}
\|v - u^*\|_{H^2(\Om)}/L\le \|\tilde{F}(v) - \tilde{F}(u^*)\|_Z \le L\|v - u^*\|_{H^2(\Om)}, \; \fa v\in V,
\end{align}
and also 
\bb\l{eq:criterion2}
\lim_{k\to\infty}\|(\delta L_k s^k,\delta\tau_ks^k) \|_Z/\|s^k\|_{H^2(\Om)}=0, \q \textnormal{with $s^k=u^{k+1}-u^k$ for all $k\in \N$.}
\ee
The criterion \eqref{eq:J_error} and the definitions of $\delta L_k$ and $\delta \tau_k$ imply that \eqref{eq:criterion2} holds:
$$
\bigg(\f{\|(\delta L_k s^k,\delta\tau_ks^k) \|_Z}{\|s^k\|_{H^2(\Om)}}\bigg)^2= \bigg(\f{\|f^e_k\|_{L^2(\Om)}+\|g^e_k\|_{H^{3/2}(\p\Om)}}{\|s^k\|_{H^2(\Om)}}\bigg)^2\le \f{2J_k(u^{k+1})}{\|s^k\|^2_{H^2(\Om)}}\le 2\eta_0\eta_{k+1}\to 0,
$$
as $k\to \infty$. Moreover, the boundedness of the coefficients $a^{ij},b^i,c,f$ shows that $\tilde{F}$ is Lipschitz continuous.
Finally, the characterization of the generalized differential of $\tilde{F}$ in Theorem \ref{thm:pi_superlinear} and the regularity theory of elliptic operators (see Theorem \ref{thm:D_regularity}) show that for each $v\in H^2(\Om)$, we can choose an  invertible operator $M_v=(L_v,\tau)\in \p^*\tilde{F}(v)$ such that $\|M_v^{-1}\|_{\cL(Z,H^2(\Om))}\le C<\infty$, uniformly in $v$. Thus we can conclude from the semismoothness of $\tilde{F}$ at $u^*$ that
\begin{align*}
\|\tilde{F}(v)-\tilde{F}(u^*)\|_Z&=\|M_v(v-u^*)+o(\|v-u^*\|_{H^2(\Om)})\|_Z\ge \|M_v(v-u^*)\|_Z-o(\|v-u^*\|_{H^2(\Om)})\\
&\ge \|v-u^*\|_{H^2(\Om)}/C-o(\|v-u^*\|_{H^2(\Om)})\ge  \|v-u^*\|_{H^2(\Om)}/(2C),
\end{align*}
for all $v$ in some neighborhood $V$ of $u^*$, which completes our proof for $q$-superlinear convergence of $\{u^k\}_{k\in\N}$.

Finally, we establish the pointwise convergence of $\{u^k\}_{k=1}^\infty$ and their derivatives. For any given $\gamma\in (0,1)$, the superlinear convergence of $\{u^k\}_{k=1}^\infty$ implies that there exists a constant $C>0$, depending on $\gamma$, such that $\|u^{k}-u^*\|^2_{H^2(\Om)}\le C\gamma^{2k}$ for all $k\in \N$. Taking the summation over the index $k$, we have
$$
\int_\Om\sum_{k=1}^\infty \bigg(|u^{k}-u^*|^2+\sum_{i,j=1}^n[|\p_iu^{k}-\p_iu^*|^2+|\p_{ij}u^{k}-\p_{ij}u^*|^2]\bigg)\,dx=\sum_{k=1}^\infty\|u^{k}-u^*\|^2_{H^2(\Om)}\le \f{C\gamma^{2}}{1-\gamma^2}<\infty,
$$
where we  used the monotone convergence theorem in the first equality. Thus, we have 
$$\sum_{k=1}^\infty \bigg(|u^{k}-u^*|^2+\sum_{i,j=1}^n[|\p_iu^{k}-\p_iu^*|^2+|\p_{ij}u^{k}-\p_{ij}u^*|^2]\bigg)(x)<\infty, \q \textnormal{for a.e.~$x\in \Om$},$$
which leads us to the pointwise convergence of $u^k$ and its partial derivatives with respect to $k$.
\end{proof}

\begin{Remark}\l{rmk:H3/2}
We reiterate that  merely including the $L^2(\p\Om)$-norm of the boundary data in the cost functional \eqref{eq:J_error} in general cannot  guarantee  the  convergence of  the derivatives of the numerical solutions $\{u^k\}_{k=1}^\infty$, which can be seen from the following simple example. Let $\{g_k\}_{k=1}^\infty\subseteq H^{3/2}(\p\Om)$ be a sequence such that $g_k\to 0$ in $L^2(\p\Om)$ but not in $H^{1/2}(\p\Om)$, and for each $k\in \N$, let $h^k\in H^2(\Om)$ be the strong solution to $-\Delta h^k=0$ in $\Om$ and $h^k=g_k$ on $\p\Om$. 

The fact that $g_k\not\to 0$ in $H^{1/2}(\p\Om)$ implies that $h^k\not\to 0$ in $H^{1}(\Om)$ as $k\to\infty$. We now show $\lim_{k\to \infty}h^k=0$ in $L^2(\Om)$. Let $w\in H^2(\Om)$ be the solution to $-\Delta w=h^k$ in $\Om$ and $w=0$ on $\p\Om$, we can deduce from the integration by parts and the \textit{a priori} estimate $\|w\|_{H^2(\Om)}\le C\|h^k\|_{L^2(\Om)}$ that 
\begin{align*}
\|h^k\|^2_{L^2(\Om)}&=\int_\Om (-\Delta w)\, h^k\,dx=\int_\Om  w\, (-\Delta h^k)\,dx+\int_{\p\Om} w\p_n h^k\,d\sigma-\int_{\p\Om} h^k \p_n w\,d\sigma\\
&\le C\|g_k\|_{L^2(\p\Om)} \|w\|_{H^2(\Om)}\le C\|g_k\|_{L^2(\p\Om)} \|h^k\|_{L^2(\Om)},
\end{align*}
which shows that $\|h^k\|_{L^2(\Om)}\le C\|g_k\|_{L^2(\p\Om)}\to 0$ as $k\to \infty$.
Now let $\cF$ be a given family of trial functions, which is dense in $H^2(\Om)$. One can find $\{u^k\}_{k=1}^\infty\subseteq \cF$ satisfying $\lim_{k\to\infty}\|u^k-h^k\|_{H^2(\Om)}=0$, and consequently $u^k\not\to 0$ in $H^1(\Om)$ as $k\to \infty$. However, we have 
$$
\|-\Delta u^{k}\|^2_{L^2(\Om)}+\|u^k\|^2_{L^2(\p\Om)}= \|-\Delta (u^k-h^k)\|^2_{L^2(\Om)}+\|u^k-h^k+g_k\|^2_{L^2(\p\Om)} 
\to 0, \q \textnormal{as $k\to\infty$.}
$$
Similarly, one can construct functions $\{u^k\} _{k=1}^\infty\subseteq \cF$ such that $\|-\Delta u^{k}\|^2_{L^2(\Om)}+\|u^k\|^2_{H^{1/2}(\p\Om)}\to 0$ as $k\to \infty$, but $\{u^k\}_{k=1}^\infty$ does not converge to $0$ in $H^2(\Om)$.
\end{Remark}

We end this section with a convergent approximation of the optimal control strategies based on the iterates $\{ u^k\}_{k=1}^\infty$ generated by Algorithm \ref{alg:dpi_d}. For any given $u\in H^2(\Om)$, we denote by $\bA^u(x)$ and $\bB^u(x,\a)$ the set of optimal  control strategies for all $\a\in \bA$ and for a.e.~$x\in \Om$, such that
\begin{align*}
\bB^{u}(x,\a) &= \arg\min_{\b\in \bB}\big(b^i(x,\a,\b)\p_i u(x)+c(x,\a,\b)u(x)-f(x,\a,\b)\big), \\
\bA^{u}(x)&= \arg \max_{\a\in \A}\min_{\b\in \bB}\big(b^i(x,\a,\b)\p_i u(x)+c(x,\a,\b)u(x)-f(x,\a,\b)\big).
\end{align*}
As an important  consequence of the superlinear convergence of Algorithm \ref{alg:dpi_d}, we now conclude that the feedback control strategies $\{\a^k\}_{k=1}^\infty$ and $\{\b^k\}_{k=1}^\infty$ generated by Algorithm \ref{alg:dpi_d} are convergent to the optimal control strategies.

\begin{Corollary}\l{cor:dpi_ctrl}
Suppose the assumptions of Theorem \ref{thm:dpi_global} hold, and let $u^*\in H^2(\Om)$ be the solution to the Dirichlet problem  \eqref{eq:D}.
Assume further that 
there exist functions $\a^*:\Om\to \bA$ and $\b^*:\Om\to \bB$ such that
$\bA^{u^*}(x)=\{\a^*(x)\}$
and 
$\bB^{u^*}(x,\a^*(x))=\{\b^*(x)\}$ for a.e.~$x\in \Om$.
Then the measurable functions   $\a^k:\Om\to \bA$ and $\b^k:\Om\to \bB$, $k\in\N$,  generated by Algorithm \ref{alg:dpi_d} converge   to 
the optimal feedback control $(\a^*,\b^*)$ pointwise almost everywhere. 

\end{Corollary}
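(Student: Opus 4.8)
The plan is to combine the a.e.\ pointwise convergence of the gradients provided by Theorem~\ref{thm:dpi_global} with the upper hemicontinuity of the maximization/minimization correspondences (Theorem~\ref{thm:uhc}) and the assumed uniqueness of the optimizers at $u^*$. First I would fix a set $N\subset\Om$ of full Lebesgue measure off which: (i) $(u^k(x),\nabla u^k(x))\to(u^*(x),\nabla u^*(x))$ as $k\to\infty$, which is exactly the last conclusion of Theorem~\ref{thm:dpi_global}; and (ii) $\bA^{u^*}(x)=\{\a^*(x)\}$ and $\bB^{u^*}(x,\a^*(x))=\{\b^*(x)\}$. Since $\a^k,\b^k$ are already known to be measurable, it then suffices to prove $\a^k(x)\to\a^*(x)$ and $\b^k(x)\to\b^*(x)$ for every $x\in N$.

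Second, I would handle the outer control $\a^k$. Writing $\u^k(x)=(u^k(x),\nabla u^k(x))$ and recalling the functions $\ell,h$ from \eqref{eq:l}--\eqref{eq:h}, the update \eqref{eq:a_k} says $\a^k(x)\in\arg\max_{\a\in\bA}h(x,\u^k(x),\a)$. Because $\bB$ is compact and $\ell(x,\cdot,\cdot,\cdot)$ is continuous, $h(x,\cdot,\a)$ is continuous on $\R^{n+1}$ for each $\a\in\bA$ (this is part of Theorem~\ref{thm:uhc}), and since $\bA$ is finite the function $\u\mapsto\max_{\a\in\bA}h(x,\u,\a)$ is continuous as well. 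Now $\{\a^k(x)\}_{k}$ takes only finitely many values; if $\tilde\a\in\bA$ occurs for infinitely many $k$, then along that subsequence $h(x,\u^k(x),\tilde\a)=\max_{\a\in\bA}h(x,\u^k(x),\a)$, and letting $k\to\infty$ (using (i) and the continuity just established) gives $h(x,\u^*(x),\tilde\a)=\max_{\a\in\bA}h(x,\u^*(x),\a)$, i.e.\ $\tilde\a\in\bA^{u^*}(x)=\{\a^*(x)\}$. Hence $\tilde\a=\a^*(x)$, so $\a^k(x)=\a^*(x)$ for all sufficiently large $k$.

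Third, I would handle the inner control $\b^k$. For $k$ large enough that $\a^k(x)=\a^*(x)$, the update \eqref{eq:b_k} reads $\b^k(x)\in\arg\min_{\b\in\bB}\ell(x,\u^k(x),\a^*(x),\b)$, a nonempty subset of the compact metric space $\bB$. Any subsequence of $\{\b^k(x)\}$ admits a further subsequence converging to some $\bar\b\in\bB$; by continuity of $\ell(x,\cdot,\a^*(x),\cdot)$ together with the upper hemicontinuity of $\u\rightrightarrows\arg\min_{\b\in\bB}\ell(x,\u,\a^*(x),\b)$ (Theorem~\ref{thm:uhc}), passing to the limit along this subsequence yields $\bar\b\in\arg\min_{\b\in\bB}\ell(x,\u^*(x),\a^*(x),\b)=\bB^{u^*}(x,\a^*(x))=\{\b^*(x)\}$. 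Thus every convergent subsequence of $\{\b^k(x)\}$ has limit $\b^*(x)$, and sequential compactness of $\bB$ forces $\b^k(x)\to\b^*(x)$. Since $x\in N$ was arbitrary, this gives the claimed pointwise a.e.\ convergence of $(\a^k,\b^k)$ to $(\a^*,\b^*)$.

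Finally, I do not expect a deep obstacle: the argument is essentially Berge's maximum theorem applied along the a.e.-convergent sequence of gradients. The one point needing care is the ordering of the two steps — one must first exploit the finiteness of $\bA$ to obtain the stabilization $\a^k(x)=\a^*(x)$ before analysing $\b^k$, since the inner minimization in \eqref{eq:b_k} is evaluated at the moving control $\a^k(x)$ rather than at a fixed one; and without the uniqueness hypotheses in (ii), only subsequential convergence to \emph{some} optimizer could be concluded.
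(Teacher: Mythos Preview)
Your proposal is correct and follows essentially the same route as the paper: both combine the a.e.\ pointwise convergence of $(u^k,\nabla u^k)$ from Theorem~\ref{thm:dpi_global} with the upper hemicontinuity of the argmax/argmin correspondences (Theorem~\ref{thm:uhc}) and the assumed uniqueness of the optimizers at $u^*$. The paper's proof is terser---it records the upper hemicontinuity of $\Gamma_1(x,\cdot)$ and $\Gamma_2(x,\cdot,\cdot)$ and then declares the conclusion ``follows directly''---whereas you spell out the two-step argument (first stabilize $\a^k(x)=\a^*(x)$ via finiteness of $\bA$, then pass to subsequential limits for $\b^k(x)$), which is exactly the content hidden behind that phrase.
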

\begin{proof}
Let $\ell$ and $h$   be the Carath\'{e}odory  functions defined by \eqref{eq:l} and \eqref{eq:h}, respectively, and we consider the following set-valued mappings: 
\begin{align}\l{eq:Gamma}
\begin{split}
\Gamma_1:(x,\u)\in \Om\t \R^{n+1}&\rightrightarrows\Gamma_1(x,\u)\coloneqq\argmax_{\a\in \bA}h(x,\u,\a),\\
 \Gamma_2:(x,\u,\a)\in \Om\t \R^{n+1}\t \bA &\rightrightarrows \Gamma_2(x,\u,\a)\coloneqq \argmin_{\b\in \bB}\ell(x,\u,\a,\b).
 \end{split}
\end{align}
Theorem \ref{thm:uhc} implies that the set-valued mappings $\Gamma_1(x,\cdot):\R^{n+1}\rightrightarrows \bA$ and $\Gamma_2(x,\cdot,\cdot):  \R^{n+1}\t \bA \rightrightarrows\bB$ are  upper hemicontinuous. 
Then the result follows directly from  the pointwise convergence of $(u^k,\nabla u^k)_{k=1}^\infty$ in Theorem \ref{thm:dpi_global}, 
and the fact that  $\bA^{u^*}(x)=\{\a^*(x)\}$  and 
$\bB^{u^*}(x,\a^*(x))=\{\b^*(x)\}$ are singleton  for a.e.~$x\in \Om$.
\end{proof}
\color{black}

\begin{Remark}\l{rmk:dpi_ctrl}
If we assume in addition that $\bA\subset X_A$ and $\bB\subset Y_B$ for some Banach spaces $X_A$ and $Y_B$, then by using the compactness of $\bA$ and $\bB$ (see (H.\ref{assum:D})), we can conclude from the dominated convergence theorem that 
$\a^k\to \a^*$ in $L^p(\Om; X_A)$ and $\b^k\to \b^*$ in 
 $L^p(\Om; Y_B)$, for any $p\in [1,\infty)$.
\end{Remark}
 
\section{Inexact policy iteration for HJBI oblique derivative problems} \l{sec:oblique}
In this section, we extend  the algorithms introduced in previous sections  to more general boundary value problems. In particular, we shall propose a neural network based policy iteration algorithm with global $H^2$-superlinear convergence for solving HJBI  boundary value problems with oblique   derivative boundary conditions.
Similar arguments can be adapted to design superlinear convergent schemes for  mixed boundary value problems with both Dirichlet and oblique   derivative boundary conditions.

We consider the following HJBI oblique   derivative problem:
\begin{subequations}\l{eq:N}
\begin{align}
F(u)&\coloneqq -a^{ij}(x)\p_{ij}u+G(x,u,\nabla u)=0, \q\textnormal{a.e.~$x\in \Om$}, \l{eq:hjb_n}\\
Bu&\coloneqq  \gamma^i\tau(\p_iu)+\gamma^0 \tau u -g=0, \q \textnormal{on $\p\Om$.} \l{eq:bc_n}
\end{align}
\end{subequations}
where \eqref{eq:hjb_n} is the HJBI equation given in \eqref{eq:hjb}, and \eqref{eq:bc_n} is an oblique boundary condition.
Note that 
the boundary condition $Bu$ on $\p\Om$ involves 
 the traces of $u$ and its first partial derivatives, which exist  almost everywhere on $\p\Om$ (with respect to the surface measure). 
 
 The following conditions are imposed on the coefficients of \eqref{eq:N}:
 \begin{Assumption}\l{assum:N}
 Assume $\Om$, $\bA$, $\bB$, $(a^{ij})_{i,j=1}^n$, $(b^i)_{i=1}^n,c,f$ satisfy the same conditions as those in (H.\ref{assum:D}). 
Let $g\in H^{1/2}(\p\Om)$,  $\{\gamma^i\}_{i=0}^n\subseteq C^{0,1}(\p\Om)$, $\gamma^0\ge 0$ on $\p\Om$, and assume there exists a constant $\mu>0$, such that $c\ge \mu$ on $\Om\t\bA\t \bB$, and  $\sum_{i=1}^n\gamma^i\nu_i\ge \mu$ on $\p\Om$,  where $\{\nu_i\}_{i=1}^n$ are the components of the unit outer normal vector field on $\p\Om$.
\end{Assumption}
 
The next proposition establishes the well-posedness of the oblique   derivative problem. 
 
 \begin{Proposition}\l{prop:wp_n}
Suppose (H.\ref{assum:N}) holds. Then the oblique   derivative problem  \eqref{eq:D} admits a unique strong solution $u^*\in H^2(\Om)$. 
\end{Proposition}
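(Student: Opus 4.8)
The plan is to follow the template of Proposition \ref{prop:wp_d}, reducing the statement to the linear theory of oblique derivative problems. For \emph{uniqueness}, suppose $u,v\in H^2(\Om)$ are two strong solutions of \eqref{eq:N} and set $w\coloneqq u-v$. Exactly as in the proof of Proposition \ref{prop:wp_d}, I would build the mean-value coefficients $\tilde b^i$ and $\tilde c$ from the difference quotients of the Hamiltonian $G$ in the $\p_i u$ and the $u$ variables, respectively: the boundedness of $b^i,c,f$ gives $\{\tilde b^i\}_{i=1}^n\subset L^\infty(\Om)$, and the sandwich inequality for the $\inf$--$\sup$ of parametrised families already used there yields $\tilde c(x)\ge\inf_{(\a,\b)\in\bA\t\bB}c(x,\a,\b)\ge\mu>0$ for a.e.~$x\in\Om$ under (H.\ref{assum:N}). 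Since $Bu=Bv=g$, the function $w$ is then a strong solution of the homogeneous linear oblique derivative problem
\[
-a^{ij}\p_{ij}w+\tilde b^i\p_i w+\tilde c\,w=0 \ \textnormal{a.e.~in }\Om,\qquad \gamma^i\tau(\p_i w)+\gamma^0\tau w=0 \ \textnormal{on }\p\Om.
\]

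Next I would invoke the well-posedness and a priori estimate for linear oblique derivative problems --- the analogue of Theorem \ref{thm:D_regularity} collected in the Appendix --- which, under the hypotheses of (H.\ref{assum:N}) (a bounded $C^{1,1}$ domain, $\{a^{ij}\}_{i,j=1}^n\subset C(\bar{\Om})$ uniformly elliptic, $\tilde b^i\in L^\infty(\Om)$, $\tilde c\ge\mu>0$, $\gamma^i\in C^{0,1}(\p\Om)$, $\gamma^0\ge 0$, and the strict obliqueness $\sum_{i=1}^n\gamma^i\nu_i\ge\mu>0$), guarantees that this problem has a unique strong solution in $H^2(\Om)$; as $w$ is such a solution, $w=0$ and hence $u=v$. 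Alternatively one can argue directly by the maximum principle for oblique derivative problems: a positive interior maximum of $w$ is ruled out by ellipticity together with $\tilde c>0$, and a positive maximum on $\p\Om$ is ruled out by combining Hopf's boundary-point lemma --- which forces a strictly positive outward normal derivative there --- with the boundary relation $\gamma^i\p_i w=-\gamma^0 w\le 0$ and the decomposition of the oblique vector field into its (strictly positive) normal component and a tangential component that annihilates $w$ at a boundary extremum; applying the same reasoning to $-w$ gives $w\equiv 0$.

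For \emph{existence} I would not argue directly but obtain a solution constructively, as in the Dirichlet case: the (inexact) policy iteration algorithm for \eqref{eq:N} introduced later in this section generates, from an arbitrary initial guess and with the criterion \eqref{eq:J_error} adapted to the oblique boundary operator $B$, a sequence converging in $H^2(\Om)$ whose limit is a strong solution of \eqref{eq:N}; this is the content of the oblique analogues of Theorems \ref{thm:pi_global} and \ref{thm:dpi_global} proved in the Appendix. Together with the uniqueness established above, this yields the asserted well-posedness.

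The main obstacle is the linear oblique derivative input. Unlike in the Dirichlet case, the homogeneous oblique boundary condition does not by itself fix $w$ on $\p\Om$, so the hypotheses strengthened in (H.\ref{assum:N}) relative to (H.\ref{assum:D}) --- in particular the strictly positive zeroth-order coefficient $c\ge\mu$ (rather than merely $c\ge 0$), the strict obliqueness $\sum_{i=1}^n\gamma^i\nu_i\ge\mu>0$, together with the $C^{0,1}$ regularity of the $\gamma^i$ and the $C^{1,1}$ boundary --- are precisely what make the linear oblique derivative operator boundedly invertible on $H^2(\Om)$ with a bound uniform over the admissible coefficients; this uniform invertibility is also what will drive the superlinear convergence of the policy iteration in Section \ref{sec:oblique}. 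Verifying that $\tilde b^i,\tilde c$ are measurable and bounded so that the linear theory applies is routine and mirrors Proposition \ref{prop:wp_d}.
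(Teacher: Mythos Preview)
Your approach is essentially the paper's: uniqueness by linearizing the difference $w=u-v$ and invoking Theorem \ref{thm:N_regularity}, existence deferred to the policy-iteration convergence result (Theorem \ref{thm:dpi_global_n}). One small slip: if you define $\tilde c$ \emph{exactly} as in Proposition \ref{prop:wp_d}, it equals $0$ on $\{u=v\}$, so the claim $\tilde c\ge\mu$ a.e.\ is false as written; the paper fixes this by setting $\tilde c(x)\coloneqq\mu$ (rather than $0$) on the zero set of $u-v$, which then genuinely yields $\tilde c\ge\mu>0$ a.e.\ and places you squarely in the hypotheses of Theorem \ref{thm:N_regularity}. Your alternative maximum-principle route is not used in the paper but is a legitimate substitute.
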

\begin{proof}
We shall establish the uniqueness of strong solutions to \eqref{eq:N} in this proof, and then explicitly  construct the solution in Theorem \ref{thm:dpi_global_n} with the help of  policy iteration; see also Theorem \ref{thm:pi_global}. Suppose that  $u,v\in H^2(\Om)$ are two strong solutions to \eqref{eq:N}, then we can see $w=u-v$ is a strong solution to the following linear oblique   derivative problem:
\bb\l{eq:n_u-v}
-a^{ij}\p_{ij}w+\tilde{b}^{i}\p_i w+\tilde{c}w=0, \q \textnormal{a.e. in $\Om$}; \q \gamma^i\tau(\p_iw)+\gamma^0\tau w=0, \q \textnormal{on $\p\Om$},
\ee
where  $\tilde{b}^i$ is defined as in Proposition \ref{prop:wp_d}, and 
\begin{align*}
\tilde{c}(x)&=\begin{cases}
\f{G(x,u,\nabla u)-G(x,v,\nabla u)}{(u-v)(x)}, & \textnormal{on $\{x\in \Om\mid (u-v)(x)\not =0\}$,}\\
\mu, &\textnormal{otherwise.}
\end{cases}
\end{align*}
By following the same arguments as the proof of Proposition \ref{prop:wp_d}, we can  show that  $\tilde{b}^i, \tilde{c}\in L^\infty(\Om)$, and $\tilde{c}\ge \mu>0$ a.e.~in $\Om$,  which, along with Theorem \ref{thm:N_regularity}, implies that $w^*=0$ is the unique strong solution to \eqref{eq:n_u-v}, and consequently $u=v$ in $H^2(\Om)$.  
\end{proof} 
 
Now we present the neural network based  policy iteration algorithm for solving the oblique   derivative problem and establish its rate of convergence. 
%

 \begin{algorithm}[H]
\caption{Inexact policy iteration algorithm for oblique   derivative problems.}
\label{alg:dpi_n}
\vspace{-3mm}
\bn
  \setcounter{enumi}{0}
\item Choose a family of trial functions $\cF=\{\cF_M\}_{M\in\N}\subset H^2(\Om)$, an initial guess $u^0$ in $\cF$, a sequence $\{\eta_k\}_{k\in \N\cup\{0\}}$ of positive scalars, and set $k=0$.
\item Given the iterate  $u^{k}$, update the  control laws  $\a^{k}$ and $\b^{k}$ 
 by  \eqref{eq:a_k} and \eqref{eq:b_k}, respectively.
\item Find $u^{k+1}\in \cF$ such that
\bb\l{eq:J_error_n}
J_k(u^{k+1})=\|L_ku^{k+1}-f_k\|^2_{L^2(\Om)}+\|Bu^{k+1}\|^2_{H^{1/2}(\p\Om)}\le \eta_{k+1}\min(\|u^{k+1}-u^k\|^2_{H^2(\Om)},\eta_0),
\ee
where  $L_k$, $f_k$, and $B$  denote the linear  operator in \eqref{eq:linear},  the source term  in \eqref{eq:linear} and   the boundary operator in \eqref{eq:bc_n}, respectively.
\item  If $\|u^{k+1}-u^k\|_{H^2(\Om)}=0$, then terminate with outputs $u^{k+1},\a^k$ and $\b^k$, otherwise increment $k$ by one and go to step 2.
\en
\vspace{-3mm}
\end{algorithm}

Note that  the $H^{1/2}(\p\Om)$-norm of the boundary term is included in the cost function $J_k$, instead of the $H^{3/2}(\p\Om)$-norm as in Algorithm \ref{alg:dpi_d}.  It is straightforward to see that Algorithm \ref{alg:dpi_n} is well-defined under (H.\ref{assum:cF}) and (H.\ref{assum:N}). In fact, for each $k\in \N\cup\{0\}$, given the iterate $u^{k}\in \cF\subset H^2(\Om)$, 
Corollary  \ref{cor:semismooth_F} shows 
that one can select measurable control laws $(\a^k,\b^k)$ such that the following linear oblique boundary value problem has measurable coefficients:
 $$
L_k u-f_k=0, \; \textnormal{in $\Om$}; \q B u=0, \; \textnormal{on $\p\Om$},
$$
and hence admits a unique strong solution $\bar{u}^k$ in $H^2(\Om)$ (see  Theorem \ref{thm:N_regularity}). If $u^k=\bar{u}^k$, then $u^k$ solve the HJBI oblique   derivative problem \eqref{eq:N}, and we can select $u^{k+1}=u^k$ and terminate the algorithm. Otherwise, the facts that  $J_k(u^{k+1})\le C\|\bar{u}^k-u^{k+1}\|^2_{H^2(\Om)}$ and $\cF$ is dense in $H^2(\Om)$ allows us to choose   $u^{k+1}\in \cF$ sufficiently closed to $\bar{u}$ such that  the criterion \eqref{eq:J_error_n} is satisfied, and proceed to the next iteration.

 The following result  is analogue to Theorem \ref{thm:dpi_global}, and shows the global superlinear convergence of Algorithm \ref{alg:dpi_n} for solving the oblique   derivative problem \eqref{eq:N}. The proof follows precisely the lines given in Theorem \ref{thm:dpi_global},  hence we shall only present the main steps in Appendix \ref{sec:proof_N} for the reader's convenience. The convergence of feedback control laws can be concluded similarly to Corollary \ref{cor:dpi_ctrl} and Remark \ref{rmk:dpi_ctrl}.
 
 \begin{Theorem}\l{thm:dpi_global_n}
Suppose  \textcolor{blue}{(H.\ref{assum:discount})}, (H.\ref{assum:cF}) and (H.\ref{assum:N}) hold,   and $\lim_{k\to \infty}\eta_k=0$ in Algorithm  \ref{alg:dpi_n}. Let $u^*\in H^2(\Om)$ be the solution to the oblique   derivative problem  \eqref{eq:N}. Then for any initial guess $u^0\in \cF$, Algorithm  \ref{alg:dpi_n} either terminates with $u^k=u^*$ for some $k\in \N$, or generates a sequence  $\{u^k\}_{k\in \N}$ that converges $q$-superlinearly to $u^*$ in  $H^2(\Om)$, i.e., $\lim_{k\to \infty}\|u^{k+1}-u^*\|_{H^2(\Om)}/\|u^{k}-u^*\|_{H^2(\Om)}=0$. Consequently, we have $\lim_{k\to\infty}(u^k,\p_iu^k,\p_{ij}u^k)(x)= (u^*,\p_iu^*,\p_{ij}u^*)(x)$  for a.e.~$x\in \Om$, and for all $i,j=1,\ldots, n$.

\end{Theorem}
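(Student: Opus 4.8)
The plan is to transcribe the proof of Theorem \ref{thm:dpi_global} essentially line by line, replacing the Dirichlet trace $\tau$ by the oblique boundary operator $B$ of \eqref{eq:bc_n}, the boundary space $H^{3/2}(\p\Om)$ by $H^{1/2}(\p\Om)$, and the Dirichlet regularity input (Theorem \ref{thm:D_regularity}) by its oblique counterpart (Theorem \ref{thm:N_regularity}). I would first deal with well-definedness and termination: for each $k$, Corollary \ref{cor:semismooth_F} together with Theorem \ref{thm:measurable_selection} supplies measurable controls $(\a^k,\b^k)$ making the coefficients of the linear oblique problem $L_ku-f_k=0$ in $\Om$, $Bu=0$ on $\p\Om$ measurable, so by Theorem \ref{thm:N_regularity} and (H.\ref{assum:N}) it has a unique strong solution $\bar u^k\in H^2(\Om)$ with $J_k(\bar u^k)=0$; since $J_k(u)\le C\|u-\bar u^k\|^2_{H^2(\Om)}$, density of $\cF$ yields $u^{k+1}\in\cF$ meeting \eqref{eq:J_error_n}. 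If the algorithm stops at step $k$, \eqref{eq:J_error_n} forces $L_ku^k-f_k=0$ and $Bu^k=0$, and since \eqref{eq:maxmin} gives $L_ku^k-f_k=F(u^k)$, $u^k$ solves \eqref{eq:N} and equals $u^*$ by Proposition \ref{prop:wp_n}; so I may assume $u^k\neq u^*$ for all $k$.

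Next I would establish linear $H^2$-convergence exactly as in Theorems \ref{thm:pi_global} and \ref{thm:dpi_global}. Writing \eqref{eq:J_error_n} as $L_ku^{k+1}-f_k=f^e_k$, $Bu^{k+1}=g^e_k$ with $\|f^e_k\|^2_{L^2(\Om)}+\|g^e_k\|^2_{H^{1/2}(\p\Om)}\le\eta_{k+1}\|u^{k+1}-u^k\|^2_{H^2(\Om)}$, splitting $c=\bar c+\ul{c}_0$ via (H.\ref{assum:discount}) with modified Hamiltonian $\bar G$ as in \eqref{eq:G_m}, and subtracting the equation and boundary condition satisfied by $u^*$, the difference $w^k\coloneqq u^{k+1}-u^*$ satisfies
\begin{align*}
-a^{ij}\p_{ij}w^k+\ul{c}_0w^k=-b^i_k\p_i(u^{k+1}-u^k)-\bar c_k(u^{k+1}-u^k)-[\bar G(\cdot,u^k,\nabla u^k)-\bar G(\cdot,u^*,\nabla u^*)]+f^e_k
\end{align*}
in $\Om$, with $\gamma^i\tau(\p_iw^k)+\gamma^0\tau w^k=g^e_k$ on $\p\Om$. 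I would then invoke the oblique analogue of the a priori bound used in Theorem \ref{thm:pi_global}: there exist $C,\gamma_0>0$, depending only on $\Om$, $\{a^{ij}\}$ and $\{\gamma^i\}_{i=0}^n$, with
\begin{align*}
\|w\|_{H^2(\Om)}+\gamma\|w\|_{L^2(\Om)}\le C\big(\|-a^{ij}\p_{ij}w+\gamma w\|_{L^2(\Om)}+\|\gamma^i\tau(\p_iw)+\gamma^0\tau w\|_{H^{1/2}(\p\Om)}\big)
\end{align*}
for all $\gamma\ge\gamma_0$, $w\in H^2(\Om)$; apply it, bound the lower-order terms in $H^1(\Om)$ using boundedness of the coefficients and $\|\bar G(\cdot,u^k,\nabla u^k)-\bar G(\cdot,u^*,\nabla u^*)\|_{L^2(\Om)}\le C\|u^k-u^*\|_{H^1(\Om)}$, absorb the residual $\|g^e_k\|_{H^{1/2}(\p\Om)}=o(\|u^{k+1}-u^k\|_{H^2(\Om)})$, and close the estimate with the interpolation inequality $\|u\|_{H^1(\Om)}\le\eps\|u\|_{H^2(\Om)}+C\eps^{-1}\|u\|_{L^2(\Om)}$ and a sufficiently large $\ul{c}_0$, exactly as in Theorem \ref{thm:pi_global}; this gives a contraction in an equivalent norm $\|\cdot\|_{c'}=\|\cdot\|_{H^2(\Om)}+c'\|\cdot\|_{L^2(\Om)}$ and hence $u^k\to u^*$ in $H^2(\Om)$.

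For the superlinear rate I would reformulate Algorithm \ref{alg:dpi_n} as a perturbed semismooth Newton iteration for $\tilde F(u)\coloneqq(F(u),Bu)=0$ on $Z\coloneqq L^2(\Om)\t H^{1/2}(\p\Om)$. Since $\gamma^i\in C^{0,1}(\p\Om)$ act as multipliers on $H^{1/2}(\p\Om)$ and $\tau\circ\p_i\in\cL(H^2(\Om),H^{1/2}(\p\Om))$, we have $B\in\cL(H^2(\Om),H^{1/2}(\p\Om))$, so Corollary \ref{cor:semismooth_F} shows $\tilde F$ is semismooth with $\p^*\tilde F(u)=\{(L,B):L\in\p^*F(u)\}$. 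Choosing $w_k\in H^2(\Om)^*$ with $\la w_k,u^{k+1}-u^k\ra=-1$ and setting $\delta L_kv\coloneqq\la w_k,v\ra f^e_k$, $\delta B_kv\coloneqq\la w_k,v\ra g^e_k$, the identity $F(u^k)=L_ku^k-f_k$ recasts the step as $(L_k+\delta L_k)(u^{k+1}-u^k)=-F(u^k)$, $(B+\delta B_k)(u^{k+1}-u^k)=-Bu^k$ with $(L_k,B)\in\p^*\tilde F(u^k)$. To apply Theorem \ref{thm:quasi-Newton} I would then check the perturbation bound $\|(\delta L_ks^k,\delta B_ks^k)\|_Z/\|s^k\|_{H^2(\Om)}\le\sqrt{2J_k(u^{k+1})}/\|s^k\|_{H^2(\Om)}\le\sqrt{2\eta_0\eta_{k+1}}\to0$ (with $s^k=u^{k+1}-u^k$) and the local bi-Lipschitz estimate $\|v-u^*\|_{H^2(\Om)}/L\le\|\tilde F(v)-\tilde F(u^*)\|_Z\le L\|v-u^*\|_{H^2(\Om)}$ near $u^*$: the upper bound is Lipschitz continuity of $\tilde F$, and the lower bound follows from Theorem \ref{thm:N_regularity}, which provides an invertible $M_v=(L_v,B)\in\p^*\tilde F(v)$ with $\|M_v^{-1}\|_{\cL(Z,H^2(\Om))}\le C$ uniformly in $v$ (uniform because the controlled coefficients satisfy (H.\ref{assum:N}) with the same constants and $\gamma^i,\gamma^0$ are fixed), combined with the semismoothness expansion. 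This yields $q$-superlinear convergence, and the pointwise convergence $(u^k,\p_iu^k,\p_{ij}u^k)(x)\to(u^*,\p_iu^*,\p_{ij}u^*)(x)$ then follows, as in Theorem \ref{thm:dpi_global}, by summing $\|u^k-u^*\|^2_{H^2(\Om)}\le C\gamma^{2k}$ and applying the monotone convergence theorem.

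I expect the main obstacle to be the oblique a priori estimate with an explicit large zeroth-order coefficient displayed in the second paragraph. Because the oblique boundary operator contains first derivatives of $w$, one cannot simply quote a standard reference as in the Dirichlet case; instead I would derive the $\gamma\|w\|_{L^2(\Om)}$ control from an $L^2$ energy identity exploiting $c\ge\gamma$ and $\sum_i\gamma^i\nu_i\ge\mu$, and then bootstrap to the full $H^2$-bound via Theorem \ref{thm:N_regularity}, carefully tracking how each constant depends on $\gamma$. All remaining steps are routine transcriptions of the Dirichlet arguments carried out in Sections \ref{sec:pi_D} and \ref{sec:dpi_D}.
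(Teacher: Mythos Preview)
Your proposal is correct and follows essentially the same route as the paper's own proof in Appendix \ref{sec:proof_N}: it too replaces $\tau$ by $B$, $H^{3/2}(\p\Om)$ by $H^{1/2}(\p\Om)$, and Theorem \ref{thm:D_regularity} by Theorem \ref{thm:N_regularity}, then re-runs the arguments of Theorems \ref{thm:pi_global} and \ref{thm:dpi_global} verbatim, including the quasi-Newton reformulation for $\bar F(u)=(F(u),Bu)$ on $L^2(\Om)\t H^{1/2}(\p\Om)$ and the perturbation construction $\delta M_k$. You are in fact more explicit than the paper about the one genuinely new ingredient, namely the oblique analogue of the large-$\gamma$ a priori estimate $\|w\|_{H^2(\Om)}+\gamma\|w\|_{L^2(\Om)}\le C(\|-a^{ij}\p_{ij}w+\gamma w\|_{L^2(\Om)}+\|Bw\|_{H^{1/2}(\p\Om)})$; the paper simply asserts that one ``can proceed as in the proof of Theorem \ref{thm:dpi_global}'' at this point without isolating or justifying this estimate, so your plan to derive it via an $L^2$ energy identity and bootstrap through Theorem \ref{thm:N_regularity} is a useful addition rather than a deviation.
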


 \section{Numerical experiments: Zermelo's Navigation Problem}\l{sec:numerical}
 In this section, we illustrate the theoretical findings and demonstrate the effectiveness of the  schemes through numerical experiments. We present a two-dimensional convection-dominated HJBI Dirichlet boundary value problem in an annulus, which is related to stochastic minimum time problems.  
 

In particular, we consider the stochastic Zermelo navigation problem (see e.g.~\cite{mohajerin2016}), which is a time-optimal control problem where the objective is to find the optimal trajectories of a ship/aircraft navigating  a region of strong winds, modelled by a random vector field. Given a bounded open set $\Om\subset \R^n$ and an adaptive control strategy $\{\a_t\}_{t\ge 0}$ taking values in $\bA$, we assume the dynamics   $X^{x,\a}$ of the ship is governed by the following controlled dynamics:
$$
dX_t=b(X_t,\a_t) \, dt+\sigma \,dW_t,\q t\in [0,\infty); \q X_0=x\in \Om,
$$
where the drift coefficient $b:\Om\t \bA\to \R^n$ is the sum of the velocity of the wind and the relative velocity of the ship, the nondegenerate diffusion coefficient $\sigma:\Om\to \R^{n\t n}$ describes a random perturbation of the velocity field of the wind, and $W$ is an $n$-dimensional Brownian motion defined on a probability space $(\tilde{\Om}, \{\cF_t\}_{t\ge 0}, \bP)$.  

The aim of the controller is to minimize the  expected exit time of the region $\Om$, taking model ambiguity into account in the spirit of \cite{royer2006}.
More generally, we consider the following value function:
\bb\l{eq:value}
u(x)\coloneqq \inf_{\a\in \cA}\cE\bigg[\int_0^{\tau_{x,\a}}f(X^{x,\a}_t)\,dt+g(X^{x,\a}_{\tau_{x,\a}})\bigg]=\inf_{\a\in \cA}\sup_{\Q\in \cM}\ex_{\Q}\bigg[\int_0^{\tau_{x,\a}}f(X^{x,\a}_t)\,dt+g(X^{x,\a}_{\tau_{x,\a}})\bigg]
\ee
over all admissible choices of $\a\in \cA$, where $\tau_{x,\a}\coloneqq \inf\{t\ge 0\mid X^{x,\a}_t\not\in {\Om}\}$ denotes the first exit time of the controlled dynamics $X^{x,\a}$, 
the functions $f$ and $g$ denote the running cost and  the exit cost, respectively, which indicate the desired destinations,  and $\cM$ is a family of absolutely continuous probability measures with respect to $\bP$ with density $M_t=\exp\big(\int_0^t \b_t\,dW_t-\f{1}{2}\int_0^t \b_t^2\,dt\big)$, where $\{\b_t\}_{t\ge 0}$ is a  predictable process satisfying $\|\b_t\|_\infty=\max_{i}|\b_{t,i}| \le \kappa$ for all $t$
and a given parameter $\kappa\ge  0$. In other words, we would like to minimize a functional of the trajectory up to the exit time under the worst-case scenario, with uncertainty arising from the unknown law of the random perturbation. 

By using the dual representation of $\cE[\cdot]$ and the dynamic programming principle (see e.g.~\cite{royer2006,buckdahn2016}), we can characterize the value function $u$ as the unique viscosity solution to an HJBI Dirichlet boundary value problem of the form \eqref{eq:D}. Moreover, under  suitable assumptions, one can further show that $u$ is  the  strong (Sobolev) solution to this Dirichlet problem (see e.g.~\cite{krylov2018}).

For our numerical experiments, we assume that the  domain $\Om$ is an annulus, i.e., $\Om=\{(x,y)\in \R^2\mid r^2< x^2+y^2< R^2\}$,  the  wind blows along  the positive  $x$-axis with a magnitude $v_c$:
$$
v_c(x,y)=1-a\sin\bigg(\pi\f{x^2+y^2-r^2}{R^2-r^2}\bigg),\q \textnormal{for some constant $a\in [0,1)$,}
$$
which decreases in terms of the distance from the bank, and the random perturbation of the wind is given by the constant diffusion coefficient $\sigma=\textrm{diag}(\sigma_x,\sigma_y)$. We also assume that the ship moves with a constant velocity $v_s$, and the captain can control  the boat's direction  instantaneously, which leads to the following dynamics of the boat in the region:
$$
\begin{pmatrix} dX^{x,\a}_t \\ dY^{x,\a}_t\end{pmatrix}
=\begin{pmatrix} v_c(X^{x,\a}_t,Y^{x,\a}_t)+v_s\cos(\a_t)\\ v_s\sin(\a_t)\end{pmatrix}dt+\begin{pmatrix} \sigma_x & 0\\ 0 &\sigma_y\end{pmatrix}dW_t,\q t\ge 0; \q \begin{pmatrix} X^{x,\a}_0 \\ Y^{x,\a}_0\end{pmatrix}=x,
$$
where $\a_t\in \bA=[0,2\pi]$ represents the angle (measured counter-clockwise) between the positive $x$-axis and the direction of the boat. 
Finally, we assume the exit cost $g\equiv 0$ on $\p B_r(0)$ and  $g\equiv 1$ on $\p  B_R(0)$,
which represents that the controller prefers to exit the domain through the inner boundary instead of the outer one (see Figure \ref{fig:zermelo}).
 Then the corresponding Dirichlet problem for the value function $u$ in \eqref{eq:value} is given by: $u\equiv 0$ on $\p B_r(0)$,  $u\equiv 1$ on $\p  B_R(0)$, and 
\begin{align}\l{eq:navigation_d}
F(u)&=-\f{1}{2}(\sigma_x^2u_{xx}+\sigma_y^2u_{yy})-
v_cu_x-v_s\inf_{\a\in \bA}\big[(\cos(\a),\sin(\a))^T\nabla u\big]-\sup_{\|\b\|_\infty\le \kappa}\big[\b^T(\sigma\nabla u)\big]-f
\nb\\
&=-\f{1}{2}(\sigma_x^2u_{xx}+\sigma_y^2u_{yy})-
v_cu_x+v_s\|\nabla u\|_{\ell^2}-\kappa \|\sigma\nabla u\|_{\ell^1}-f
=0,\q\textnormal{in $\Om$},
\end{align}
where $\|\cdot\|_{\ell^1}$ and $\|\cdot\|_{\ell^2}$ denote the $\ell^1$-norm and $\ell^2$-norm on $\R^2$, respectively.
The optimal feedback control laws can be further computed as
\begin{align}\l{eq:ctrl}
\a^*=\pi+\theta, \q \b^*=\kappa(\textrm{sgn}\big(\sigma_xu_x\big),\textrm{sgn}\big(\sigma_yu_y\big))^T,\q \textnormal{a.e.~in $\Om$,}
\end{align}
where  $u\in H^2(\Om)$ is the strong solution to \eqref{eq:navigation_d}, and $\theta\in (-\pi,\pi]$ is the angle between $\nabla u$ and the positive $x$ direction. Note that the equation \eqref{eq:navigation_d} is neither convex nor concave in $\nabla u$.

\bigskip
\begin{figurehere}
    \centering
    \includegraphics[width=0.31\columnwidth,height=5cm]{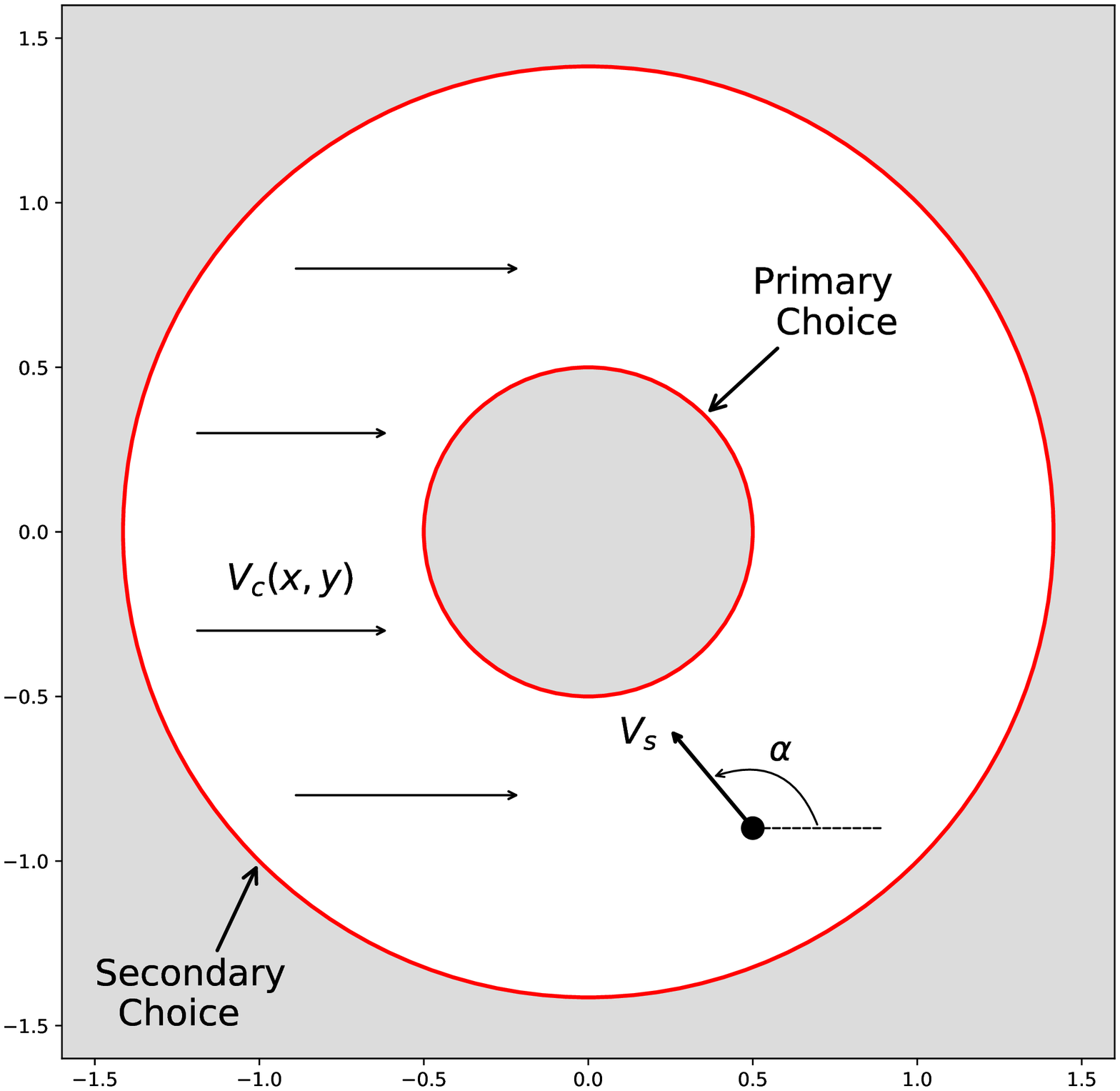}\hspace{1cm}
 \caption{Zermelo navigation problem in an annulus.}
    \label{fig:zermelo}
 \end{figurehere}
 \bigskip

\subsection{Implementation details}
In this section, we discuss the implementation details of Algorithm \ref{alg:dpi_d} for solving \eqref{eq:navigation_d}
with multi-layer neural networks (see Definition \ref{def:DNN}) as the trial functions.
We shall now introduce the architecture of the neural networks, the involved hyper-parameters, and various computational aspects  of the training process.

For simplicity, we shall adopt a fixed set of trial functions $\cF_M$ for all policy iterations, which contains   fully-connected networks  with the activation function  $\varrho(y)=\tanh(y)$,  the depth $L$, and  the dimension of each hidden layer $H$. The hyper-parameters $L$ and $H$ will be chosen depending on the complexity of the problem, which ensures that $\cF_M$ admits sufficient  flexibility to approximate the solutions within the desired accuracy. More complicated architectures of neural networks with shortcut connections can be adopted to further improve the performance of the algorithm (see e.g. \cite{e2018,sirignano2018}). 

We then proceed to discuss the computation of the  cost functional $J_k$ in \eqref{eq:J_error} for each policy iteration. It is well-known that  Sobolev norms of functions on sufficiently smooth boundaries can be explicitly computed  via local coordinate charts of the boundaries (see e.g.~\cite{grisvard1985}). In particular, due to the annulus shaped domain and the constant boundary conditions used in our experiment, we can express the cost functional $J_k$ as follows: for all $k\in \N$ and $u\in \cF_M$,
\begin{align}\l{eq:J_annu}
\begin{split}
J_k(u)=&\,\|L_ku-f_k\|^2_{L^2(\Om)}+\sum_{l=r,R}\bigg[\|u-g\|^2_{L^2(\p B_l(0))}+\gamma
\bigg(\int_{-\pi}^{\pi} |D_\theta(u\circ \Phi_l)|^2\,d\theta\\
&+\int_{(-\pi,\pi)^2}\f{|D_\theta(u\circ \Phi_l)(\theta_1)-D_\theta(u\circ \Phi_l)(\theta_2)|^2}{|\theta_1-\theta_2|^2}\,d\theta_1d\theta_2\bigg)\bigg], 
\end{split}
\end{align}
where we define the map $\Phi_l:\theta\in (-\pi,\pi)\to (l\cos(\theta),l\sin(\theta))\in \p B_l(0)$ for $l=r,R$.
Note that 
we introduce an extra weighting parameter $\gamma>0$ in \eqref{eq:J_annu}, which helps 
achieve the optimal balance between the residual of the PDE and the residuals of the boundary data. 
We set the  parameter $\gamma=0.1$ for  all the computations.

The  cost functional \eqref{eq:J_annu} is further approximated by an empirical cost via the collocation method (see \cite{lagaris1998,berg2018}), where we discretize  $\Om$ and 
$\Theta=(-\pi,\pi)^2$ 
by sets of collocation points $\Om_d=\{x_i\in\Om\mid 1\le i\le N_d\}$ and 
$\Theta_d=\{\theta=(\theta_{1,i},\theta_{2,i})\in \Theta\mid 1\le i\le N_b\}$, 
respectively, and write the discrete form of \eqref{eq:J_annu}  as follows: for all $k\in \N$ and $u\in \cF_M$,
\begin{align}\l{eq:J_d}
J_{k,d}(u)=&\,\f{|\Om|}{N_d}\sum_{x_i\in \Om_d}|L_ku(x_i)-f_k(x_i)|^2+
\sum_{l=r,R} \bigg[\f{|\p B_l(0)|}{N_b}\sum_{\theta\in \Theta_d}|(u-g)\circ \Phi_l(\theta_{1,i})|^2\\
&+\gamma\bigg(
\f{2\pi}{N_b}\sum_{\theta\in \Theta_d} |D_\theta(u\circ \Phi_l)|^2(\theta_{1,i})+
\f{(2\pi)^2}{N_b}\sum_{\theta\in \Theta_d}\f{|D_\theta(u\circ \Phi_l)(\theta_{1,i})-D_\theta(u\circ \Phi_l)(\theta_{2,i})|^2}{|\theta_{1,i}-\theta_{2,i}|^2}\bigg)\bigg],\nb
\end{align} 
 where $|\Om|=\pi(R^2-r^2)$, and $|\p B_l(0)|=2\pi l$ for $l=r,R$ are, respectively, the Lebesgue measures of the domain and boundaries. Note that the choice of the smooth activation function $\varrho(y)=\tanh(y)$ implies that every trial function $u\in \cF_M$ is smooth, hence all its derivatives are well-defined  at any given point. For simplicity, we take the same number of collocation points in the domain and on the boundaries, i.e., $N_d=N_b=N$.
 
It is clear that the choice of collocation points  is crucial for the accuracy and efficiency of the algorithm. 
Since the total number of points in a regular grid grows exponentially  with respect to the dimension, such a construction is infeasible for high-dimensional problems. Moreover, it is well-known that uniformly distributed pseudorandom points in high dimensions tend to cluster on hyperplanes 
and lead to a suboptimal distribution by relevant measures of uniformity
(see e.g.~\cite{cervellera2004,berg2018}).  Therefore, 
we shall generate collocation points by a quasi-Monte Carlo (QMC) method based on  low-discrepancy sequences. In particular, we first define 
points in  $[0,1]^2$ from the generalized Halton sequence (see \cite{faure2009}), and then  map those  points  into the annulus via the polar map $(x,y)\mapsto (l\cos(\psi),l\sin(\psi))$, where $l=\sqrt{(R^2-r^2)x+r^2}$ and $\psi=2\pi y$ for all $(x,y)\in [0,1]^2$. The above transformation preserves fractional area, which ensures that
a set of well-distributed points  on the square will map to a set of points spread evenly over the annulus.
We also use Halton points to approximate the (one-dimensional) boundary segments.
 
Now  we are ready to describe the training process, i.e., how to optimize \eqref{eq:J_d} over all trial functions in $\cF_M$. The optimization is performed by using the well-known Adam stochastic gradient descent (SGD) algorithm \cite{kingma2014} with a decaying learning rate schedule. At each SGD iteration, we randomly draw a mini-batch of points with size $B=25$ from the collection of collocation points, and perform gradient descent  based on these samples. 
We initialize the learning rate at $10^{-3}$ and 
decrease it by a factor of $0.5$ for every 2000 SGD iterations
for the examples with analytic solutions in Section \ref{sec:analytic}, while for  the examples without analytic solutions in Section \ref{sec:no_analytic} we decrease the learning rate by a factor of $0.5$ once the total number of iterations reaches one of the milestones $2000, 4000,6000,10000,20000,30000$. 

We implement Algorithm \ref{alg:dpi_d} using PyTorch and perform all computations on a NVIDIA Tesla K40 GPU with 12 GB memory. The entire algorithm can be briefly summarized as follows. Let $\{\eta_k\}_{k=0}^\infty$ be  a given  sequence, denoting the  accuracy requirement  for each policy iteration.  For each $k\in \N\cup\{0\}$, given the previous iterate $u^{k}$, we compute the feedback controls as  in \eqref{eq:ctrl} and obtain the controlled coefficients as defined in \eqref{eq:linear}. Then we apply the SGD method with analytically derived gradient to optimize $J_{k,d}$  over $\cF_M$ until we obtain a solution $u^{k+1}$ satisfying $J_{k,d}(u^{k+1}) \le \eta_{k}\min(\|u^{k+1}-u^k\|^2_{2,d},\eta_0)$, where $\|\cdot\|_{2,d}$ denotes the discrete $H^2$-norm evaluated based on the training samples in $\Om_d$. We then proceed to the next policy iteration, and terminate Algorithm \ref{alg:dpi_d} once the desired accuracy is achieved. 
 
\subsection{Examples with analytical solutions}\l{sec:analytic} 
In this section, we shall examine the convergence of Algorithm \ref{alg:dpi_d} for solving Dirichlet problems of the form \eqref{eq:navigation_d} with known solutions. In particular, we shall choose a running cost  $f$ such that the analytical solution to \eqref{eq:navigation_d}  is given by $u^*(x,y)=\sin(\pi r^2/2)-\sin(\pi(x^2+y^2)/2)$ for all $(x,y)\in \Om$.
To demonstrate the  generalizability and the superlinear convergence of the numerical solutions obtained by Algorithm \ref{alg:dpi_d}, we generate a different set of collocation points in $\Om$ of the size $N_{\textrm{val}}=2000$, and use them to estimate the relative error and the $q$-factor of the numerical solution $u^k$ obtained from the $k$-th policy iteration for all $k\in \N$:
$$
\textrm{Err}_k=\f{\|u^k-u^*\|_{2,\Om,\textrm{val}}}{\|u^*\|_{2,\Om,\textrm{val}}} \q \textnormal{and}\q q_k= \f{\|u^k-u^*\|_{2,\Om,\textrm{val}}}{\|u^{k-1}-u^*\|_{2,\Om,\textrm{val}}}.
$$
We use neural networks with depth $L=4$ and varying $H$ as trial functions, initialize Algorithm \ref{alg:dpi_d} with $u^0=0$,  and perform experiments with the following model parameters:
 $a=0.04$,  $\sigma_x = 0.5$, $\sigma_y = 0.2$,  $r=0.5$, $R=\sqrt{2}$, $\kappa=0.1$ and $v_s=0.6$.

\begin{figure}[!ht]
    \centering
    \includegraphics[width=0.32\columnwidth,height=5cm]{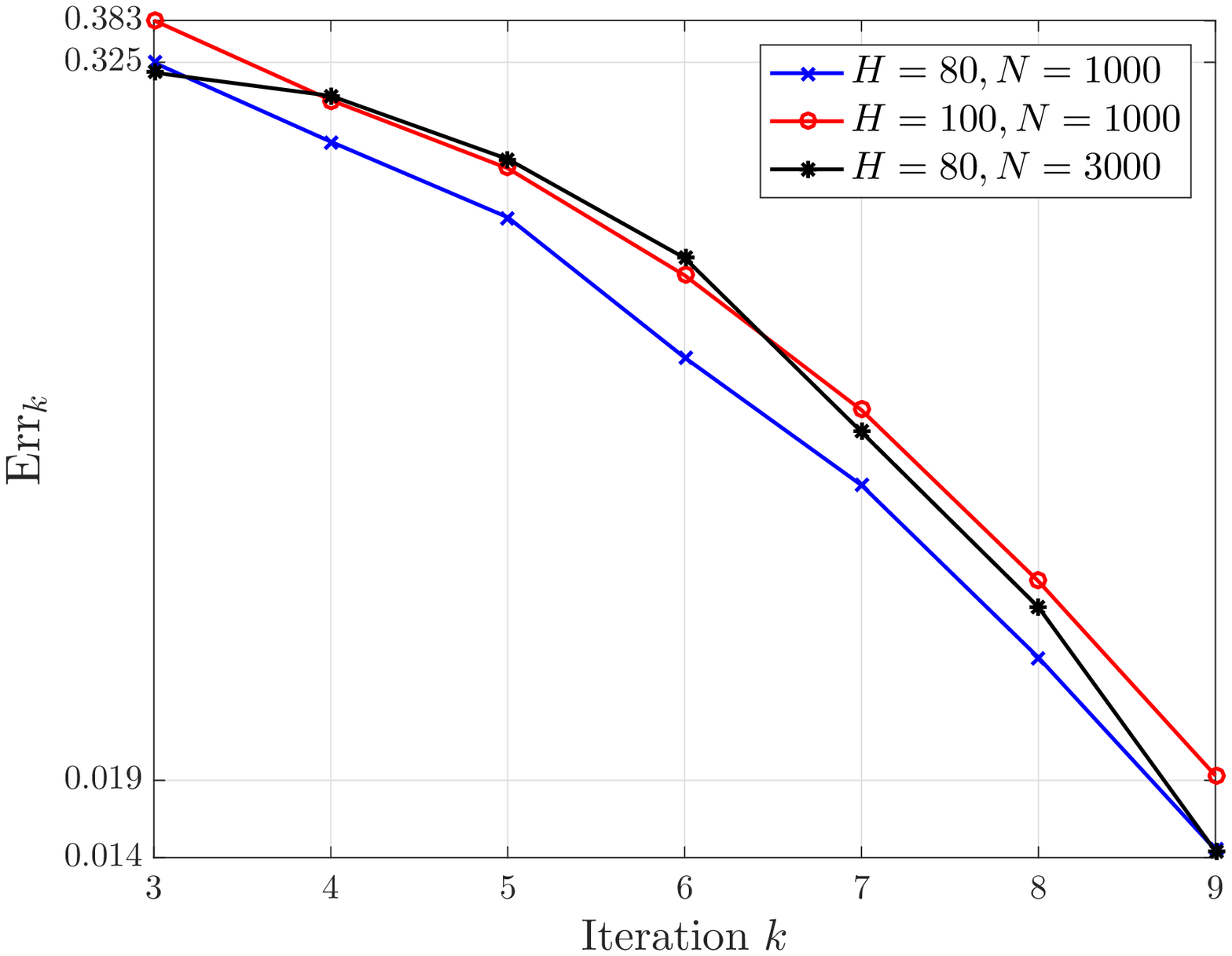}
        \includegraphics[width=0.32\columnwidth,height=5cm]{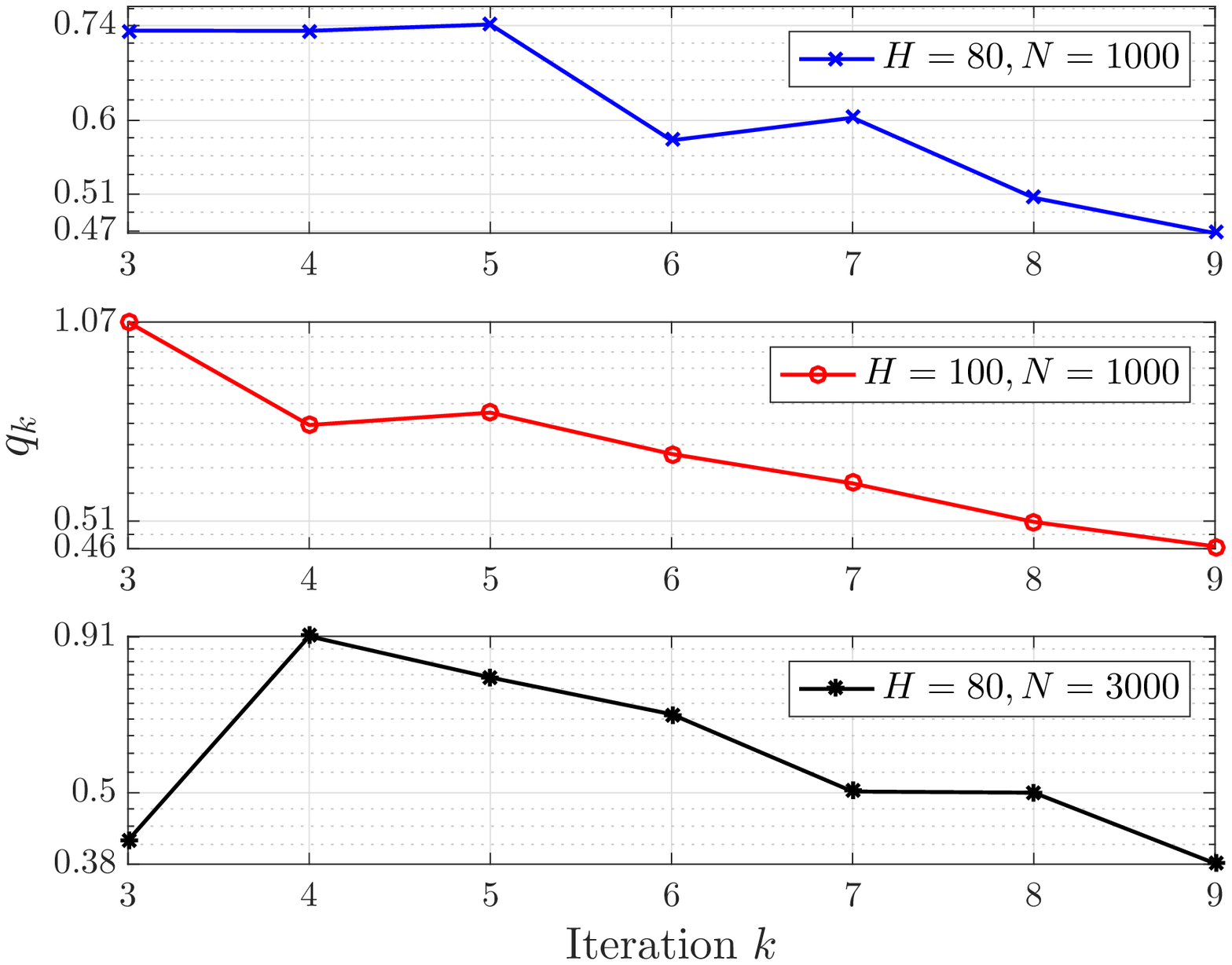}
    \includegraphics[width=0.32\columnwidth,height=5cm]{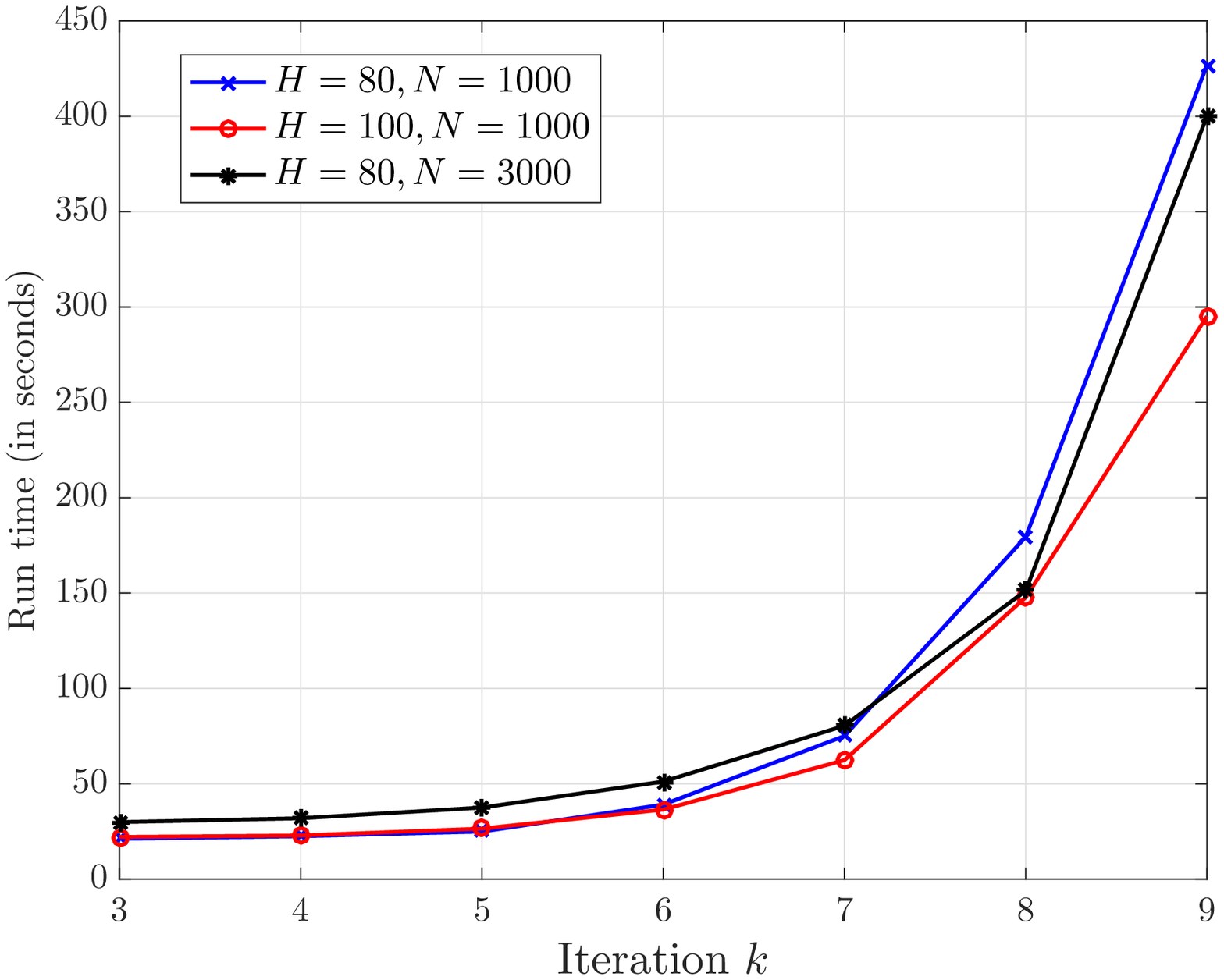}
    \caption{Impact of the training sample size $N$ and the  hidden width $H$ on the performance of  Algorithm \ref{alg:dpi_d}; from left to right: relative errors (plotted in a log scale), $q$-factors and the overall runtime for all policy iterations.}
    \label{fig:sample_size}
 \end{figure}

Figure \ref{fig:sample_size} depicts the performance of  Algorithm \ref{alg:dpi_d} with different  sizes of training samples and the dimensions of hidden layers, which are denoted by $N$ and $H$ respectively. 
The hyper-parameters $\{\eta_k\}_{k=0}^\infty$ are chosen as  $\eta_0=10$ and $\eta_k=2^{-k}$ for all $k\in \N$. One can clearly see from Figure \ref{fig:sample_size} (left) and (middle) that, despite the fact that Algorithm \ref{alg:dpi_d} is initialized with a relatively poor initial guess,  the  numerical solutions converge  superlinearly to the exact solution in the $H^2$-norm for all these combinations of $H$ and $N$, which  confirms the theoretical result in Theorem \ref{thm:dpi_global}. 
 It is interesting to  observe from Figure \ref{fig:sample_size} that,  even though increasing either the complexity of the networks (the red lines) or the size of training samples (the black lines) seems to accelerate the training process slightly (right), neither of them ensures a higher generalization accuracy on the testing samples (left). 
In all our computations, the accuracies of numerical solutions in the $L^2$-norm and the $H^1$-norm are in general higher than the accuracy  in the $H^2$-norm. For example, both the $L^2$-relative error and  
the $H^1$-relative error of  the numerical solution obtained at the 9th policy iteration with $H=80, N=1000$ are  0.0045.

\begin{figure}[!ht]
    \centering
    \includegraphics[width=0.33\columnwidth,height=5cm]{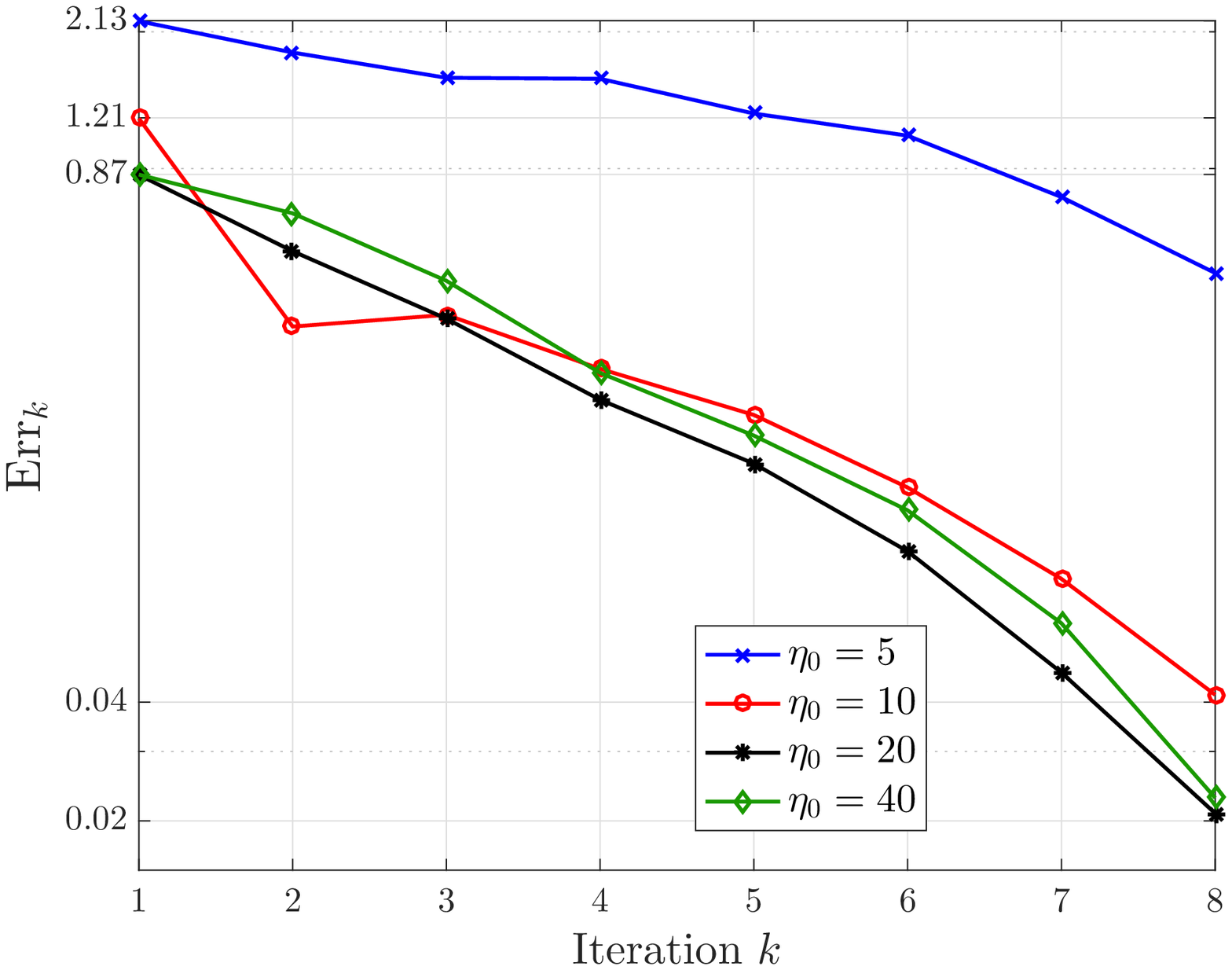}
        \includegraphics[width=0.33\columnwidth,height=5cm]{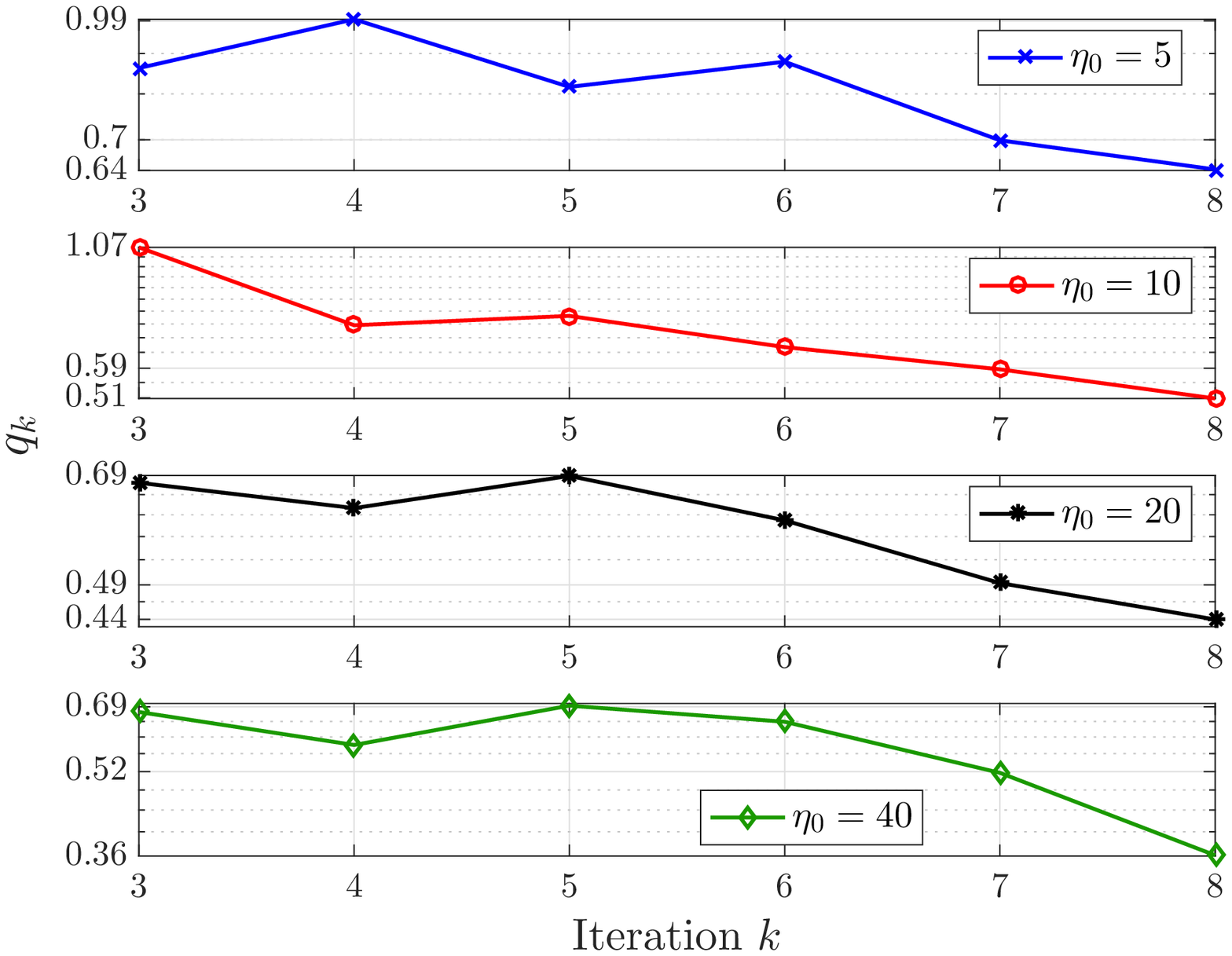}
    \includegraphics[width=0.33\columnwidth,height=5cm]{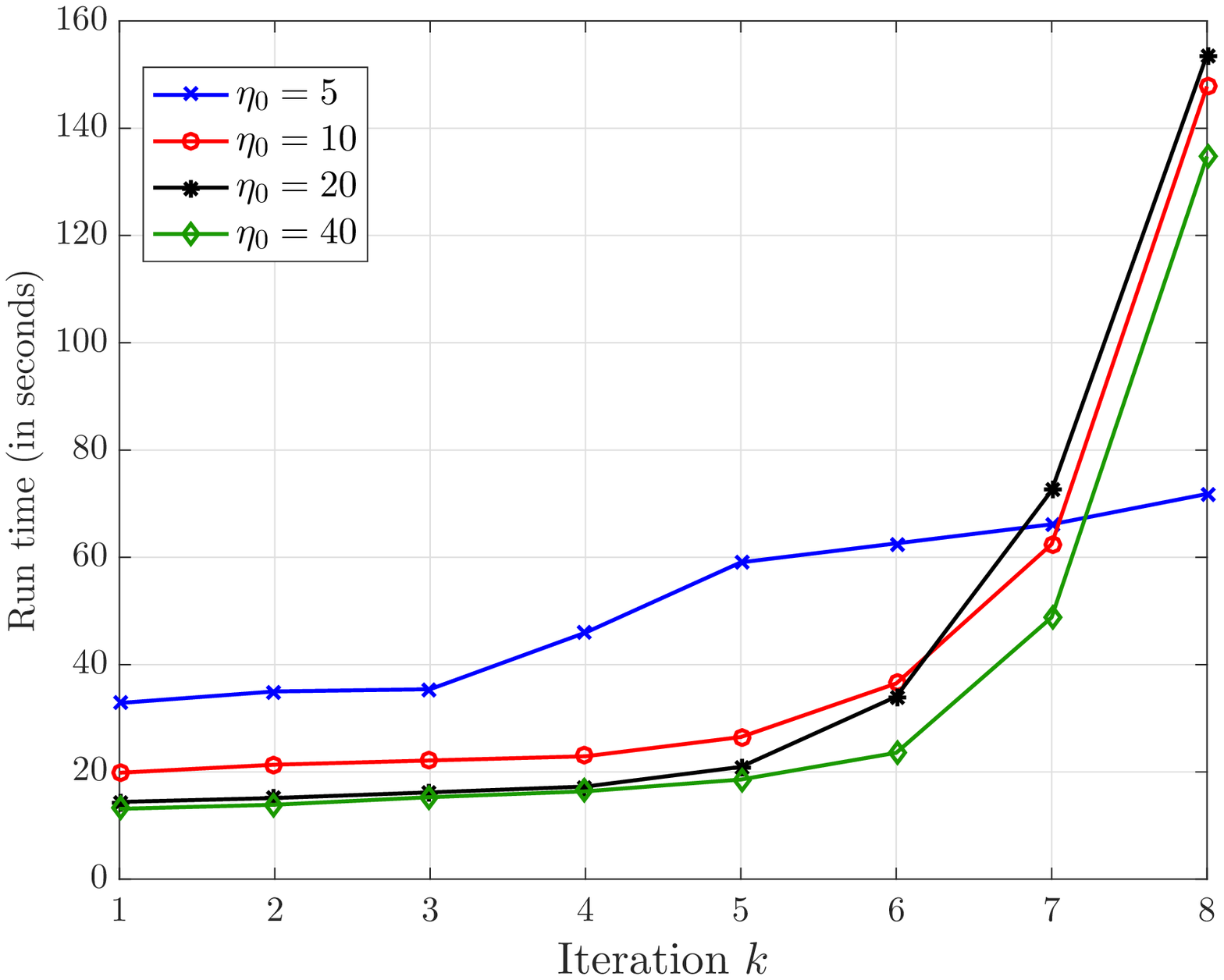}
    \caption{Impact of $\eta_0$ on the performance of  Algorithm \ref{alg:dpi_d}; from left to right: relative errors (plotted in a log scale), $q$-factors and the overall runtime for all policy iterations.}
    \label{fig:eta0}
 \end{figure}

We then proceed to analyze the effects of  the hyper-parameters $\{\eta_k\}_{k=0}^\infty$. Roughly speaking, the magnitude of $\eta_0$ indicates the accuracy of the iterates $\{u^k\}_{k=1}^\infty$ to the linear Dirichlet problems in the initial stage of Algorithm \ref{alg:dpi_d}, while the decay of $\{\eta_k\}_{k=1}^\infty$ determines the speed at which the $q$-factors $\{q_k\}_{k=1}^\infty$ converge to $0$, at an extra cost of solving the optimization problem in a given iteration more accurately for smaller $q_k$. Figure \ref{fig:eta0} presents the numerical results for different choices of $\eta_0$  with a fixed training sample size $N=1000$, hidden width $H=100$ and $\eta_k=2^{-k}$ for all $k\ge 1$.
Note that  solving each linear equation extremely accurate in the initial stage, i.e., by choosing  $\eta_0$ to be a  small value (the blue line), may not be beneficial for the overall performance of the algorithm in terms of both the accuracy and computational efficiency. 
This is due to the fact that the initialization of the algorithm is in general far from the exact solution to  the semilinear boundary value problem, and so are  the solutions of the linear equations arising from the first few policy iterations.
In fact, it appears in our experiments that  the choices of $\eta_0=20,40$ lead to the optimal performance of Algorithm \ref{alg:dpi_d}, which solves the initial equations sufficiently accurately, and  leverages the superlinear convergence of policy iteration to achieve a  higher accuracy  with a similar computational cost.

\begin{figure}[!ht]
    \centering
    \includegraphics[width=0.33\columnwidth,height=5cm]{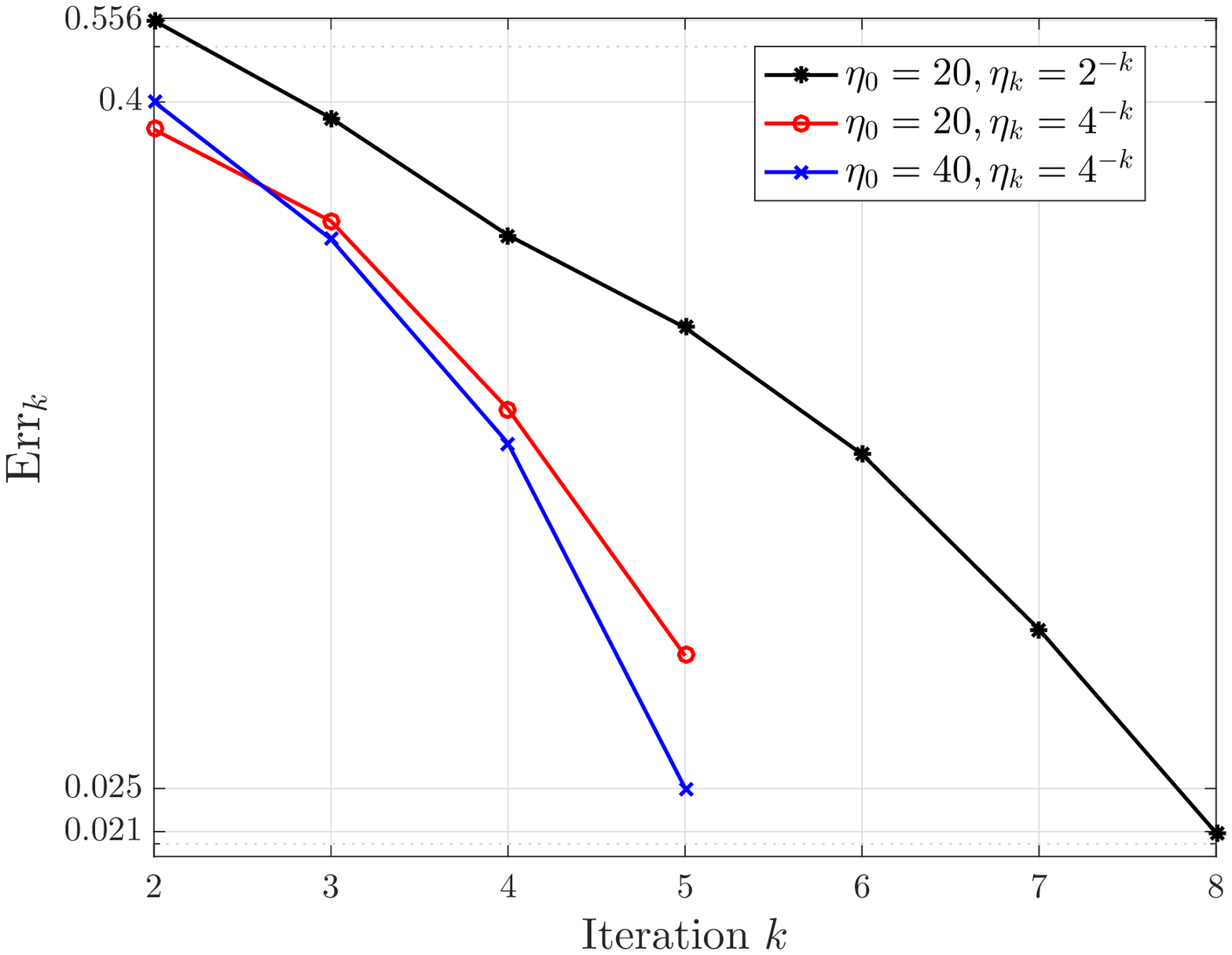}
      \includegraphics[width=0.33\columnwidth,height=5cm]{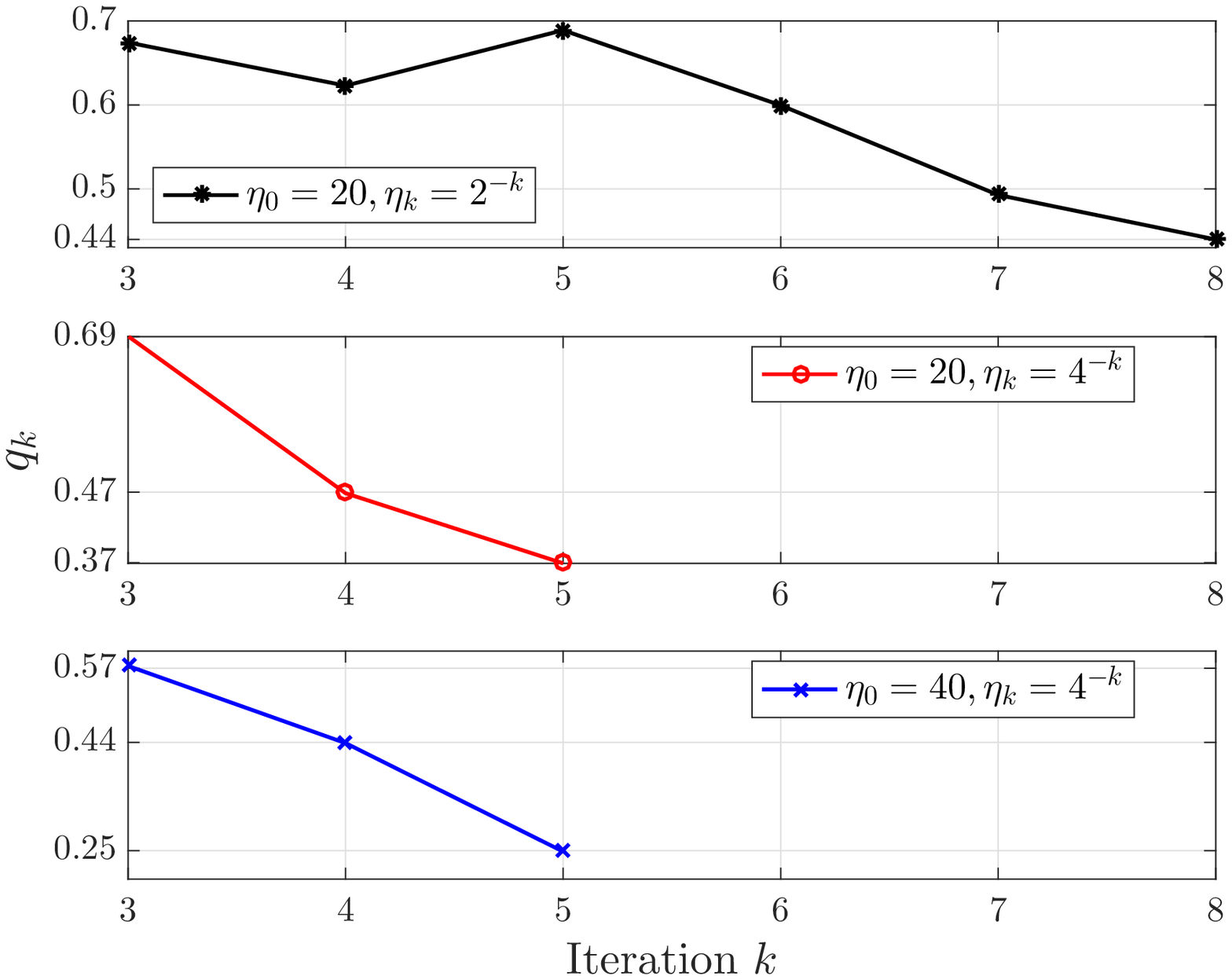}
   \includegraphics[width=0.33\columnwidth,height=5cm]{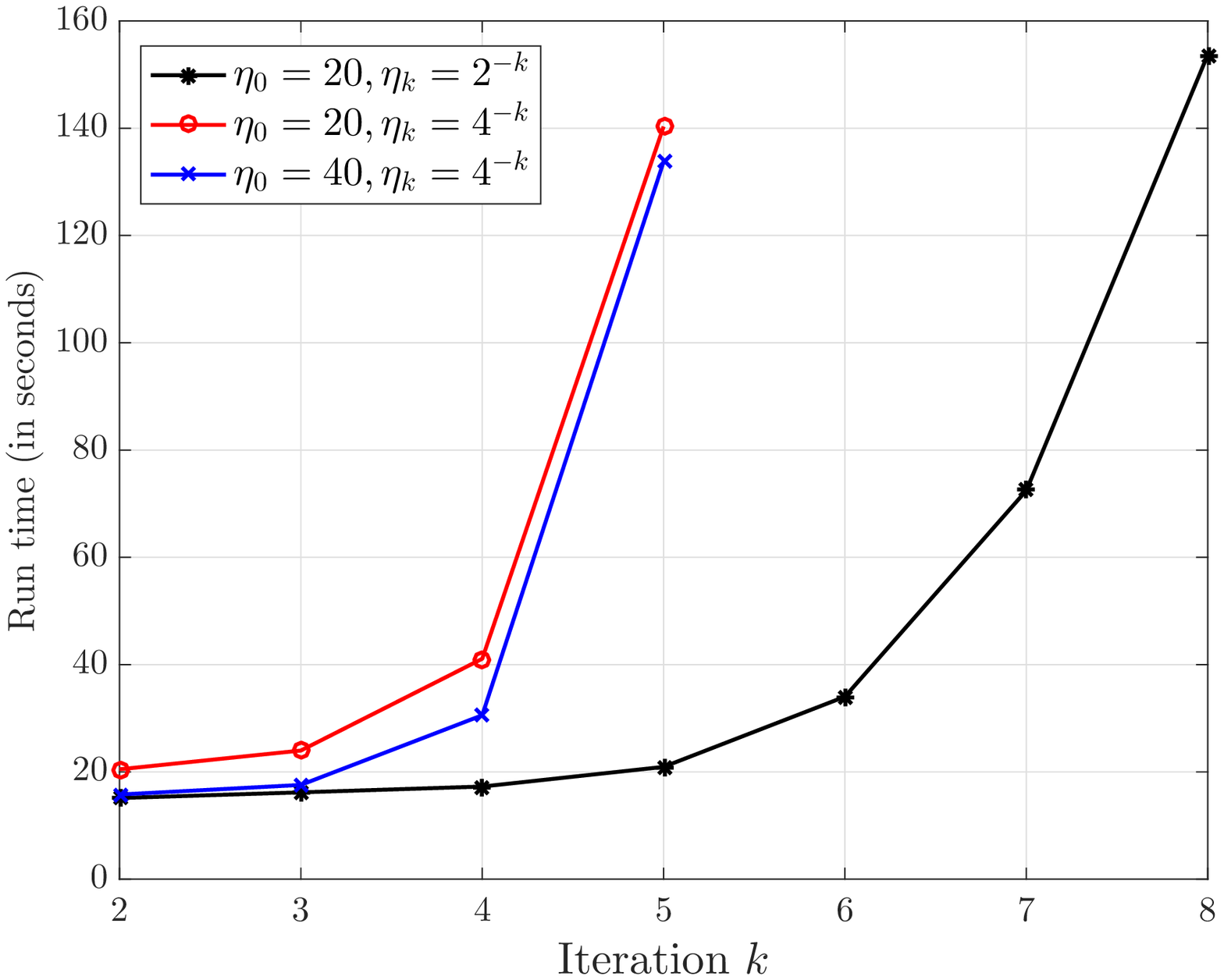}
    \caption{Impact of $\eta_k$ on the performance of  Algorithm \ref{alg:dpi_d}; from left to right: relative errors (plotted in a log scale), $q$-factors and the overall runtime for all policy iterations.}
    \label{fig:etak}
 \end{figure}
 
We further perform computations with different choices of $\{\eta_k\}_{k=1}^\infty$ by  fixing the   training sample size $N=1000$ and the hidden width $H=100$. Numerical results are shown in Figure \ref{fig:etak}, from which we can clearly observe that the iterates obtained with $\eta_k=4^{-k}$, $k\in \N$,  converge more rapidly to the exact solution. 
Note that  for $\eta_k=4^{-k}$, the optimal performance of the algorithm is achieved at $\eta_0=40$ instead of $\eta_0=20$. This is  due to the fact that we solve the first linear Dirichlet problem up to the accuracy $\eta_0\eta_1$ (if we ignore the requirement that $J_{0}(u^1)\le \eta_1\|u^1-u^0\|^2 _{H^2(\Om)}$ in \eqref{eq:J_error}), 
hence one needs to enlarge $\eta_0$ for a smaller $\eta_k$, such that   $\eta_0\eta_1$ is of the same magnitude as before. 
We  observe that 
the  rapid convergence of policy iteration indeed improves the efficiency of the algorithm, in the sense that, to achieve the same accuracy, Algorithm \ref{alg:dpi_d} with $\eta_k=4^{-k}$ requires slightly less  computational time than Algorithm \ref{alg:dpi_d} with $\eta_k=2^{-k}$, even though Algorithm \ref{alg:dpi_d} with $\eta_k=4^{-k}$ takes more time to solve the linear equations for each policy iteration;   see the last few iterations of the blue line and  the black line.
This efficiency improvement is more pronounced for the practical problems with  complicated solutions in Section \ref{sec:no_analytic}; see Figure \ref{fig:hjbi} and Table \ref{table:compare}. 
%

Finally, we shall compare the efficiency of Algorithm \ref{alg:dpi_d}
(with $\eta_0=40$, $\eta_k=4^{-k}$)
 to that of 
the Direct Methods (see e.g.~\cite{lagaris1998,berg2018,e2018,sirignano2018})
by fixing the 
trial functions  ($4$-layer networks with  hidden width $H=100$),
the training samples (with size $N=1000$) 
 and the learning rate of the SGD algorithm.
In the Direct Methods, we shall directly apply the SGD method to 
minimize the following (discretized) squared residual of the semilinear boundary value problem \eqref{eq:navigation_d}:\footnotemark
\footnotetext{
Strictly speaking,
the squared residual \eqref{eq:direct_X} is not differentiable (with respect to the network parameters) at the samples where 
one of the first partial derivatives of the current iterate $u$ is zero,
due to the nonsmooth functions $\|\cdot\|_{\ell^1},\|\cdot\|_{\ell^2}:\R^2\to [0,\infty)$ in 
the HJBI operator $F$ (see \eqref{eq:navigation_d}).
In practice, PyTorch will assign 0 as 
 partial derivatives of $\|\cdot\|_{\ell^1}$ and $\|\cdot\|_{\ell^2}$ functions at their nondifferentiable points,
and use it in the backward propagation.
}
\bb\l{eq:direct_X}
 \|F(u)\|^2_{0,\Om,\textrm{tra}}+\|u-g\|^2_{X,\p\Om,\textrm{tra}}, 
\ee
where $\|\cdot\|_{0,\Om,\textrm{tra}}$ is the discrete $L^2$ interior norm 
evaluated from the  training  samples in $\Om$,
and $\|\cdot\|_{X,\p\Om,\textrm{tra}} $ is a certain discrete boundary norm 
evaluated from samples on the boundary.
In particular, we shall perform computations by setting  
$\|\cdot\|^2_{X,\p\Om,\textrm{tra}}=\|\cdot\|^2_{3/2,\p\Om,\textrm{tra}}$   (defined  as in \eqref{eq:J_d})
and $\|\cdot\|^2_{X,\p\Om,\textrm{tra}}=\vartheta\|\cdot\|^2_{0,\p\Om,\textrm{tra}}$ with different choices of $\vartheta>0$
($\vartheta=1$  in \cite{sirignano2018}
and $\vartheta\in \{500,1000\}$ in \cite{e2018}),
which will be referred to as 
``DM with $\|\cdot\|^2_{3/2,\p\Om}$" 
and ``DM with $\vartheta\|\cdot\|^2_{0,\p\Om}$",
respectively, in the following discussion.
%
For both the Direct Methods and Algorithm \ref{alg:dpi_d},
we shall estimate  the  $H^2$-relative error of the numerical solution $\hat{u}_i$ obtained from the $i$-th SGD iteration 
by using the same testing samples in $\Om$ of the size $N_{\textrm{val}}=2000$ as follows:
$$
\textrm{SGD Err}_i={\|\hat{u}_i-u^*\|_{2,\Om,\textrm{val}}}/{\|u^*\|_{2,\Om,\textrm{val}}}, 
$$ 
where $u^*$ denotes  the analytical solution to \eqref{eq:navigation_d}.

Figure \ref{fig:direct_pi_analytic} (left)
depicts the  $H^2$-convergence of
``DM with $\|\cdot\|^2_{3/2,\p\Om}$" 
and 
``DM with $\vartheta\|\cdot\|^2_{0,\p\Om}$" (with various choices of $\vartheta>0$)
as the number of SGD iterations tends to infinity,
which clearly shows that,
compared with  using the $L^2$-boundary norm as in \cite{e2018,sirignano2018},
 incorporating the $H^{3/2}$-boundary norm in the loss function helps achieve a higher $H^2$-accuracy  
of the numerical solutions.
It is interesting to point out that,
even though  
penalizing the $L^2$-norm of the boundary term 
with a suitable parameter $\vartheta$
  helps improve the accuracy of ``DM with $\vartheta\|\cdot\|^2_{0,\p\Om}$" as suggested in \cite{e2018},
in our experiments, $\vartheta=10$ leads to the best $H^2$-convergence
of ``DM with $\vartheta\|\cdot\|^2_{0,\p\Om}$" (after $10^4$ SGD iterations) among 
other choices of $\vartheta\in \{0.1,1,5, 10,20,50,100,500,1000\}$.

Figure \ref{fig:direct_pi_analytic} (right)
presents the decay of $H^2$-relative errors with respect to the number of SGD iterations
used in
``DM with $\|\cdot\|^2_{0,\p\Om}$",
``DM with $\|\cdot\|^2_{3/2,\p\Om}$"
and  Algorithm \ref{alg:dpi_d}, 
 which clearly demonstrates that 
the superlinear convergence of policy iteration   
significantly accelerates the convergence of the algorithm.
In particular,
the accuracy enhancement of 
 Algorithm \ref{alg:dpi_d} over ``DM with $\|\cdot\|^2_{0,\p\Om}$" 
 (or equivalently the Deep Galerkin Method 
proposed in \cite{sirignano2018})
is of a factor of 20
with $10^4$ SGD iterations.
We remark that
the training time of Algorithm \ref{alg:dpi_d} is only slightly longer than
that of  ``DM with $\|\cdot\|^2_{0,\p\Om}$"
(the  runtimes 
of Algorithm \ref{alg:dpi_d} and ``DM with $\|\cdot\|^2_{0,\p\Om}$" with $10^4$ SGD iterations
are 333 and 308 seconds, respectively),
since
Algorithm \ref{alg:dpi_d} requires to  determine whether a given iterate
solves the policy evaluation equations sufficiently accurate (see \eqref{eq:J_error}), 
in order to proceed to the next policy iteration step.

\begin{figure}[!ht]
    \centering
    \includegraphics[width=0.43\columnwidth,height=6.2cm]{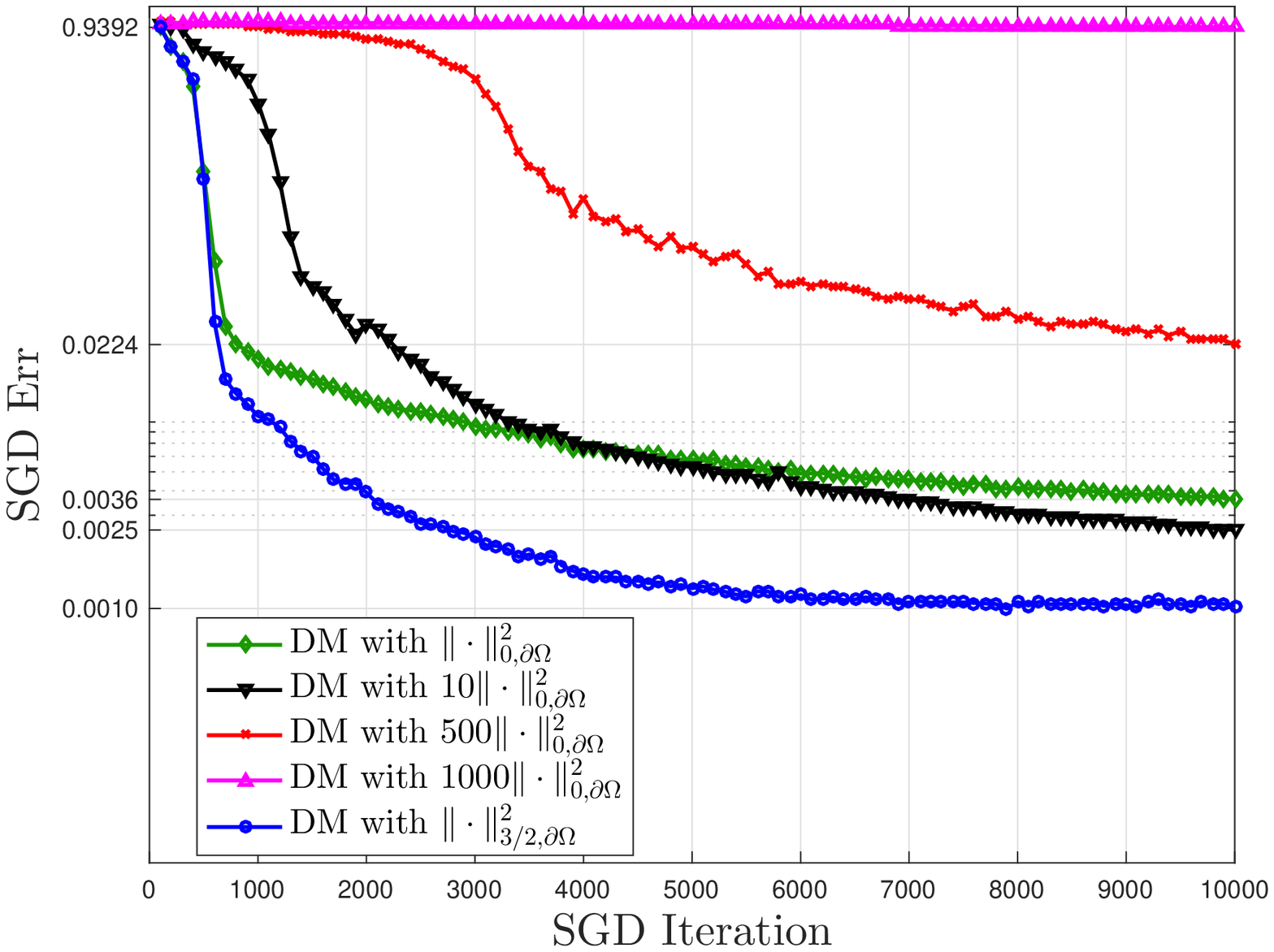}
    \includegraphics[width=0.43\columnwidth,height=6.2cm]{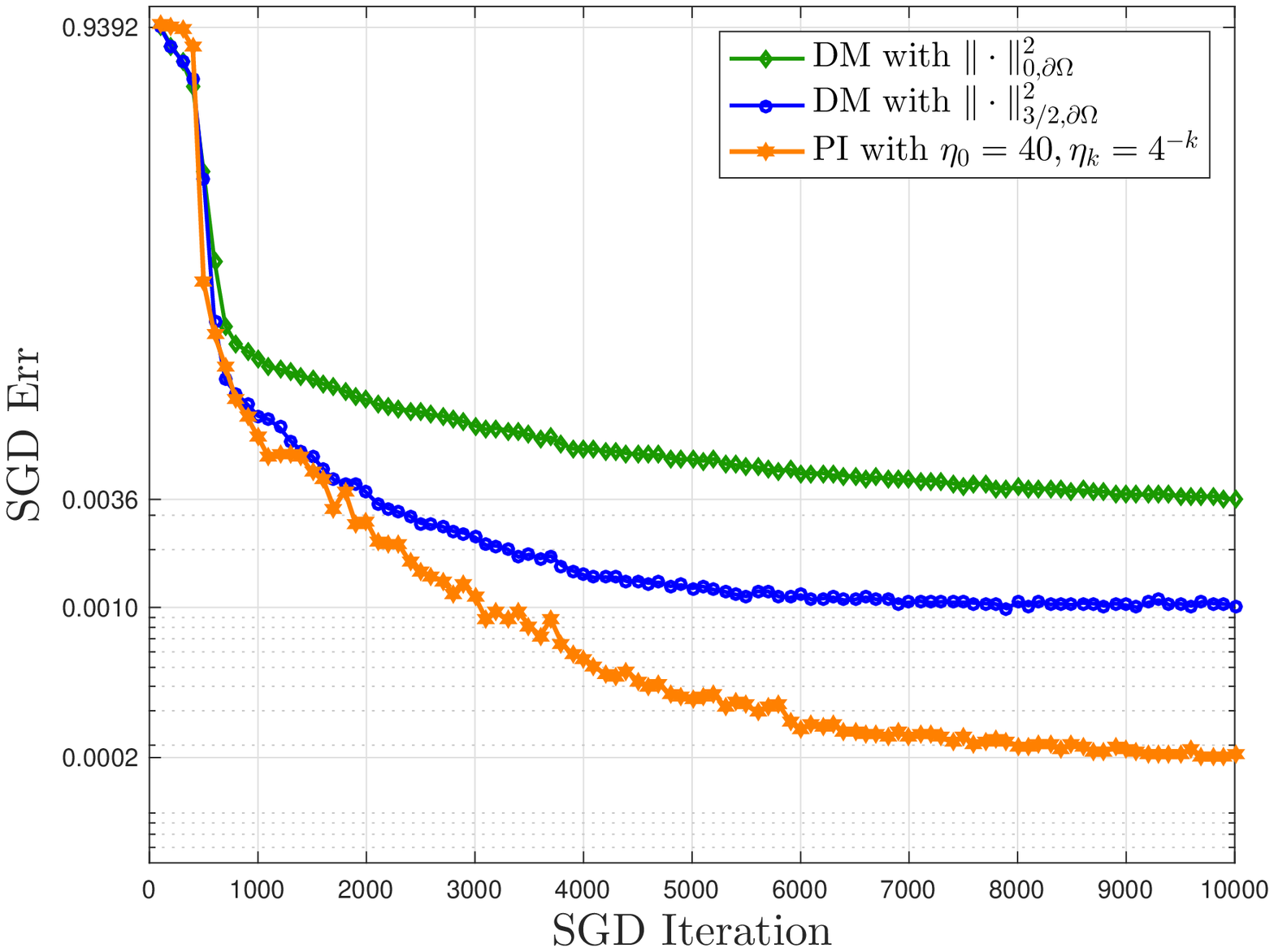}
    \caption{ Relative errors   of  the Direct Methods and Algorithm \ref{alg:dpi_d}
    with different numbers of SGD iterations (plotted in a log scale);
    from left to right: 
    improvements  caused by the $H^{3/2}$-boundary norm
    and 
    by policy iteration.}
    \label{fig:direct_pi_analytic}
 \end{figure}

\color{black}

\subsection{Examples without analytical solutions}\l{sec:no_analytic} 
 In this section, we shall demonstrate the performance of Algorithm \ref{alg:dpi_d} by solving \eqref{eq:navigation_d} with $f\equiv 1$. This corresponds to a minimum time problem with preferred targets, whose solution in general is not known analytically. Numerical simulations will be conducted with  the following model parameters:
 $a=0.2$,  $\sigma_x = 0.5$, $\sigma_y = 0.2$,  $r=0.5$, $R=\sqrt{2}$ and $\kappa=0.1$ but two different values of $v_s$, $v_s=0.5$ and  $v_s=1.2$, which are   associated with the two scenarios where the ship moves slower than and faster than the wind, respectively.  The algorithm is initialized with $u^0=0$.
 
We remark that this is a numerically challenging problem  due to the fact that the convection term in \eqref{eq:navigation_d} dominates the diffusion term, which leads to a sharp change of the solution and its derivatives near the boundaries. However, as we shall see, these boundary layers can be captured effectively by 
the numerical solutions of Algorithm \ref{alg:dpi_d}.
 
Figure \ref{fig:scenario} presents the numerical results for the two different scenarios obtained by Algorithm \ref{alg:dpi_d} with $N=2000$, $\eta_0=40$ and $\eta_k=1/k$ for all $k\in \N$. The set of trial functions consists of all fully-connected neural networks with depth $L=7$ and hidden width $H=50$ (the total number of  parameters in this network is  12951). We can clearly see from Figure \ref{fig:scenario} (left) and (middle) that for both scenarios, the numerical solution $\bar{u}$ and its derivatives are symmetric with respect to the axis $y=0$, and change rapidly near the boundaries.

The feedback control strategies, computed by \eqref{eq:ctrl}, are depicted in Figure \ref{fig:scenario} (right). If the ship starts from the left-hand side 
and travels toward the inner boundary, then the expected travel time to $\p B_r(0)$ is around $\f{R-r}{v_s+v_c}$, which is smaller than the exit cost along $\p B_R(0)$. Hence the ship would move in the direction of the positive $x$-axis for both cases, $v_s<v_c$ and $v_s>v_c$. However, the optimal control is different for the two scenarios if the  ship is on the right-hand side. 
For the case where  the ship's speed is less than the wind ($v_s=0.5$), if the ship is closed to  $\p B_r(0)$, then it would move in the direction of
the negative $x$-axis,
hoping the  random perturbation  of the wind would bring it to the preferred target, while if it is far from $\p B_r(0)$, then it has  less chance to reach $\p B_r(0)$, so  it would move along the positive $x$-axis. On the other hand, for the case where  the ship's speed is larger than the wind ($v_s=1.2$), the ship would in general try to reach the inner boundary. However, if the ship is sufficiently close to $\p B_R(0)$ in the right-hand half-plane, then the expected travel time to $\p B_r(0)$ is around $\f{R-r}{v_s-v_c}$, which is larger than the exit cost along $\p B_R(0)$. Hence the ship would choose to exit directly from  
the outer boundary.

\begin{figure}[!ht]
    \centering
    \includegraphics[width=0.35\columnwidth,height=5cm]{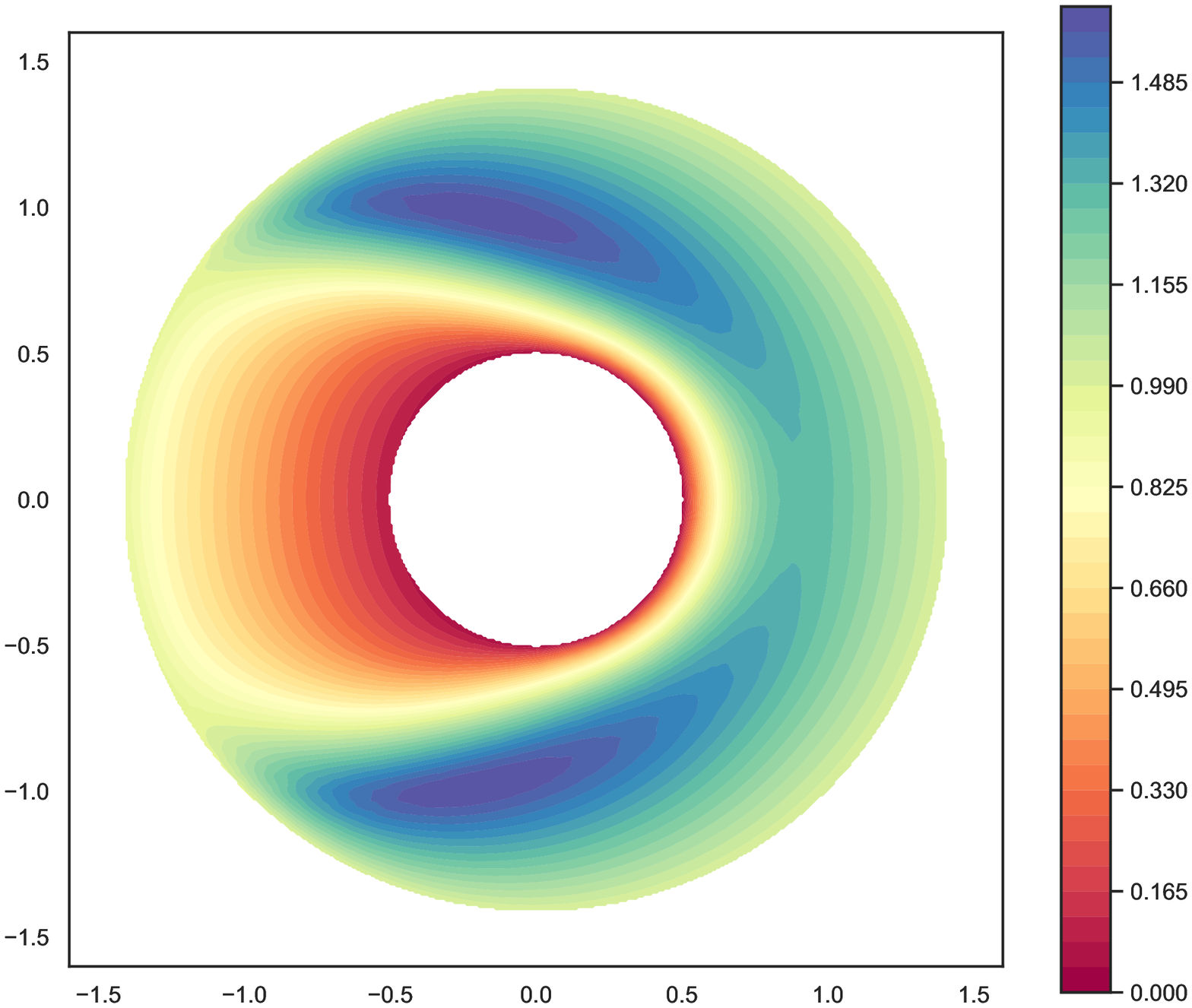}
        \includegraphics[width=0.33\columnwidth,height=5cm]{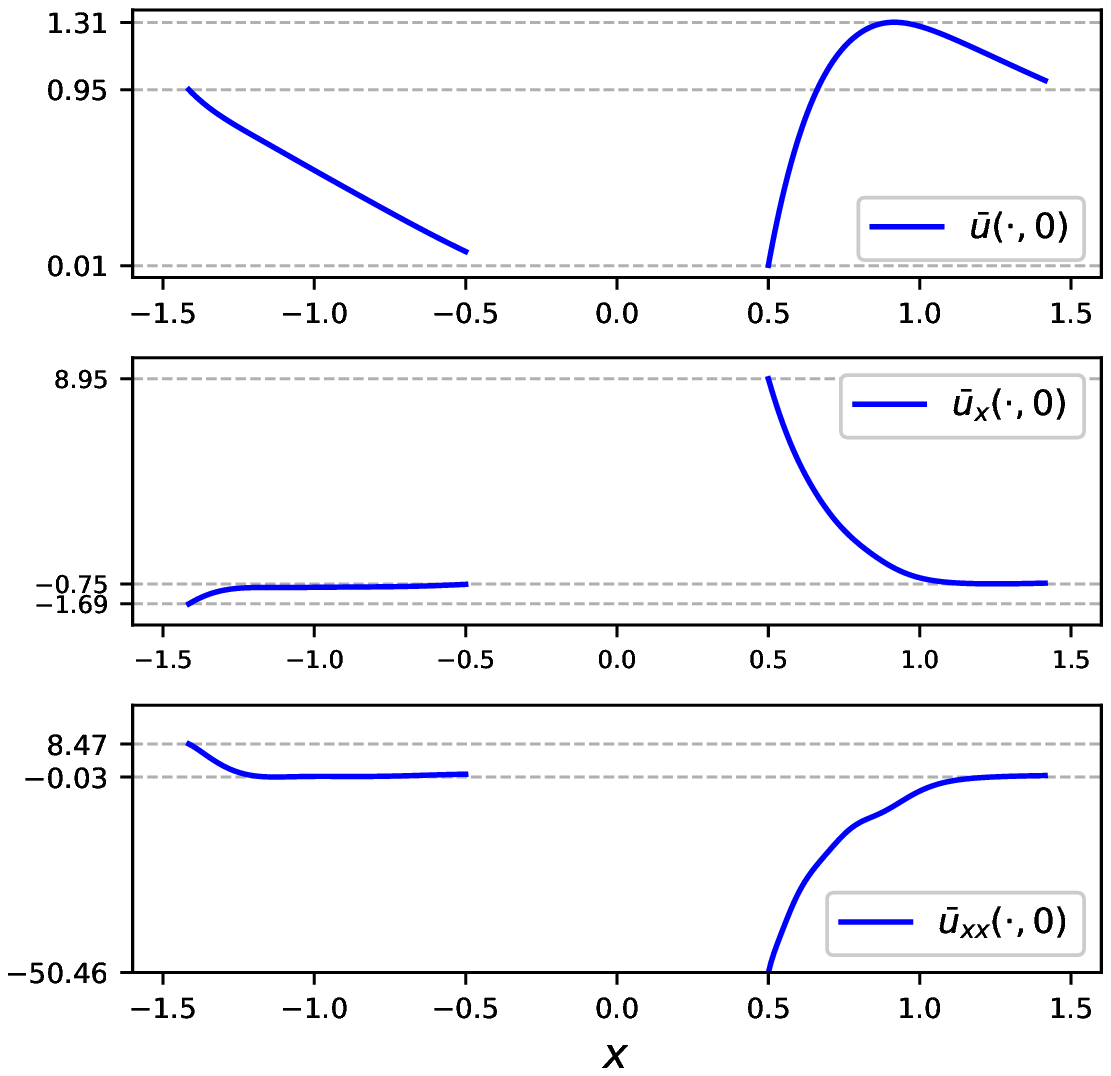}
    \includegraphics[width=0.3\columnwidth,height=5cm]{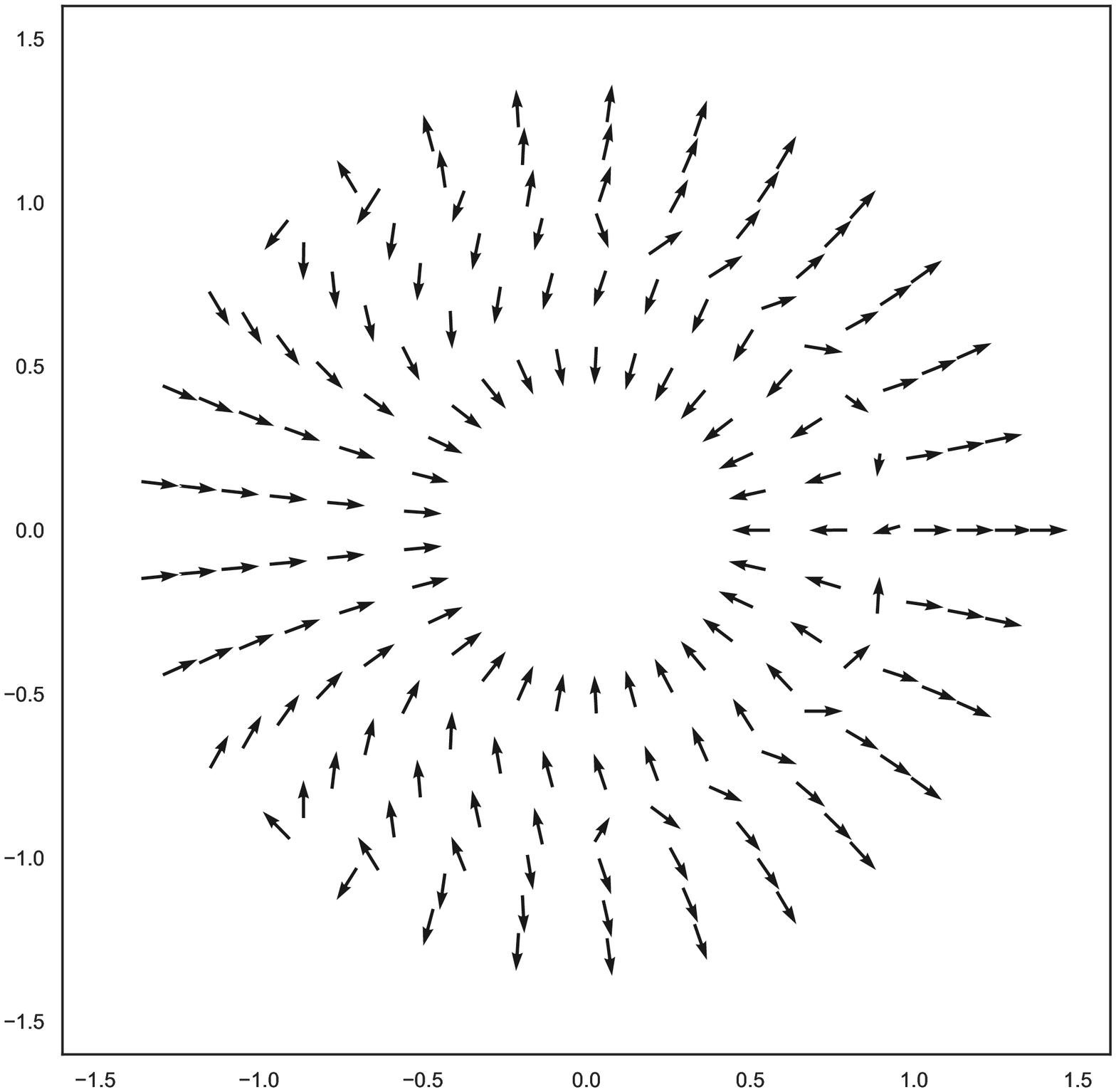}\\
     \includegraphics[width=0.35\columnwidth,height=5cm]{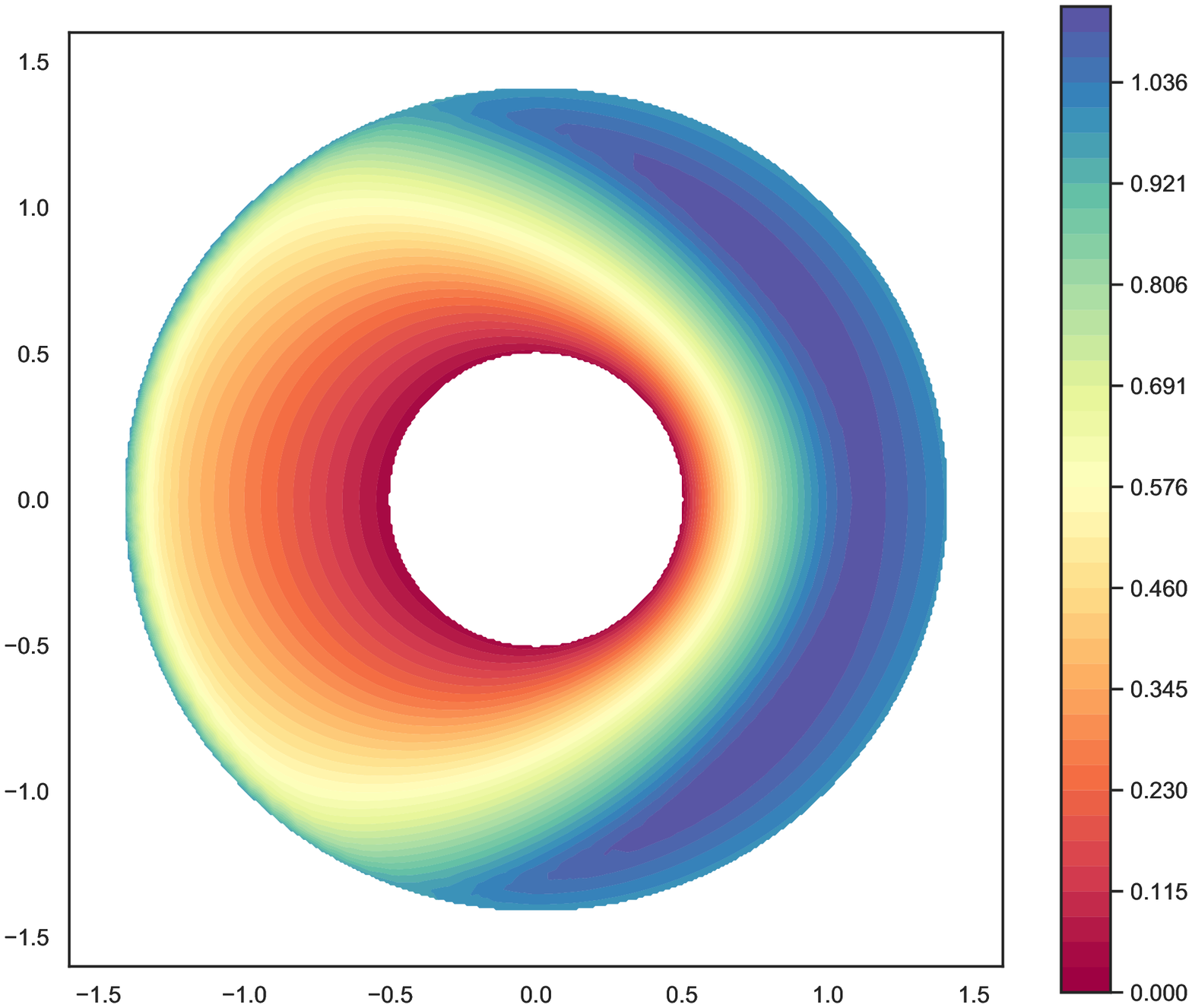}
     \includegraphics[width=0.33\columnwidth,height=5cm]{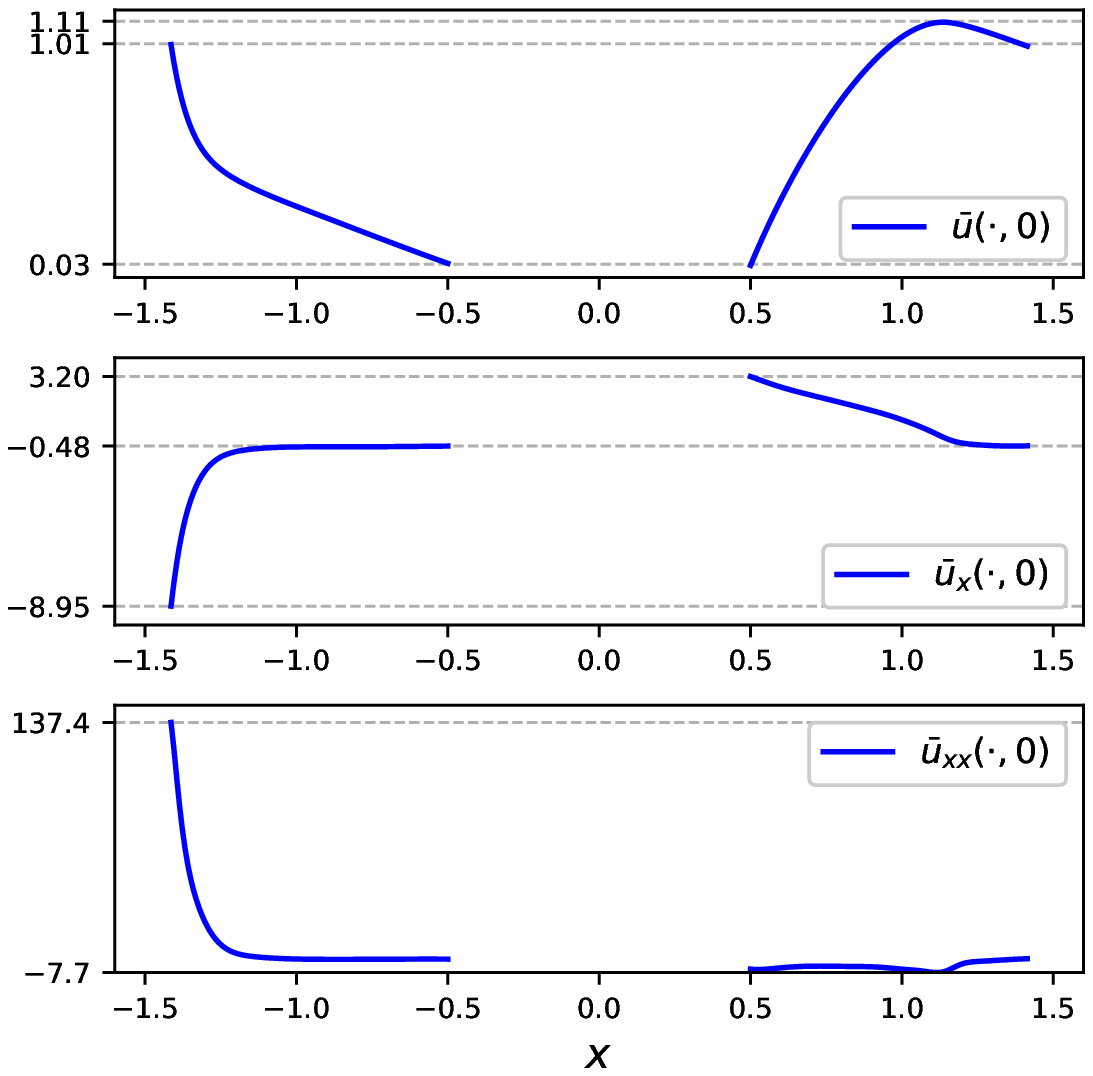}
    \includegraphics[width=0.3\columnwidth,height=5cm]{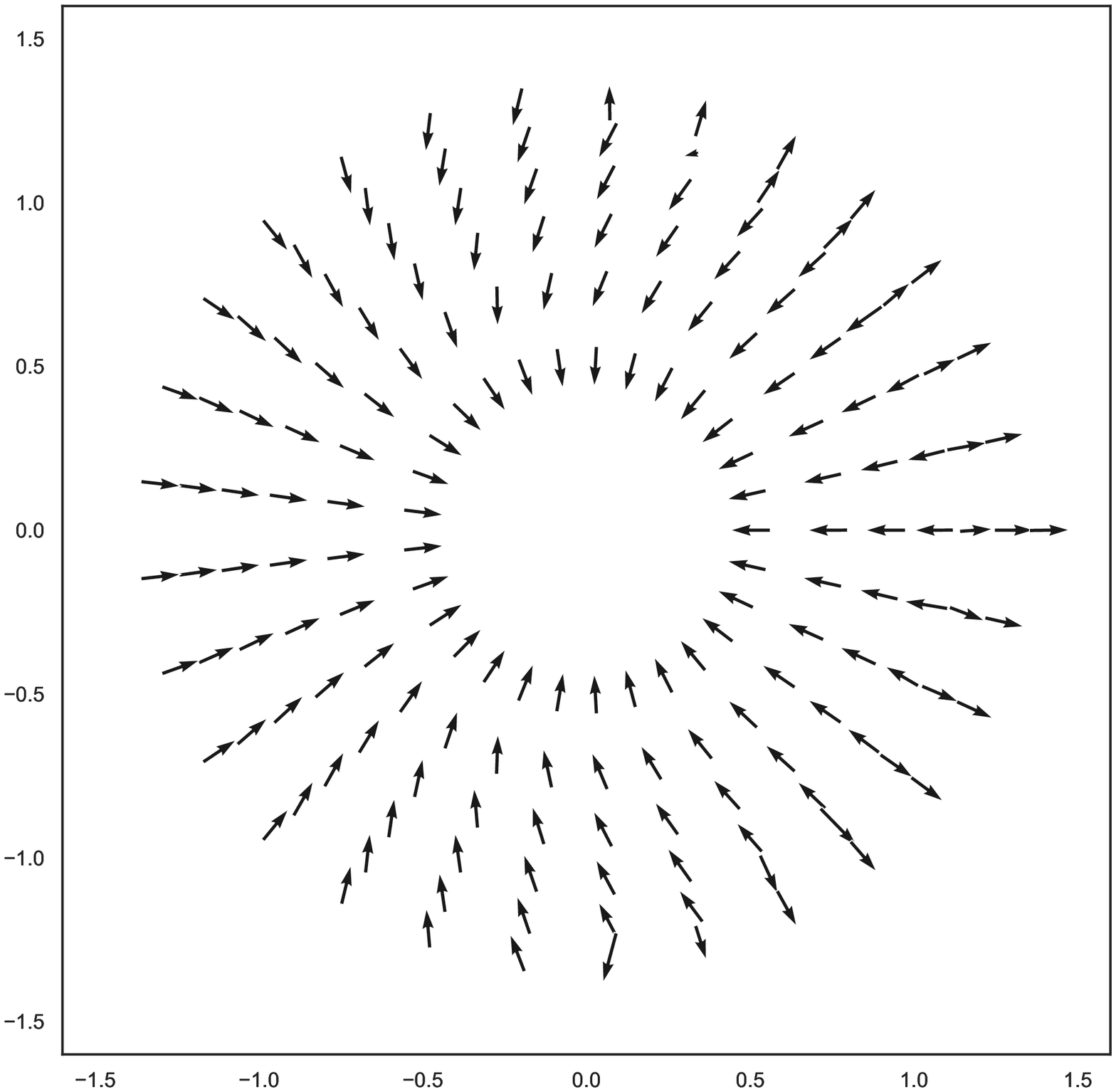}
    \caption{Numerical results for the two different scenarios;   from top to bottom: the ship moves slower than  the wind ($v_s=0.5$) and faster than the wind ($v_s=1.2$);  from left to right: the value function $\bar{u}$, the numerical solutions along $y=0$, and feedback control strategies.}   
     \label{fig:scenario}
 \end{figure}

We then analyze the convergence of Algorithm \ref{alg:dpi_d} by performing computations with 7-layer networks with different hidden width $H$
 (networks with wider hidden layers are employed such that  every linear Dirichlet problem can be solved more accurately)
 and parameters $\{\eta_k\}_{k=0}^\infty$. For any given iterate $u^k$, we shall consider the following (squared) residual of the semilinear boundary value problem \eqref{eq:navigation_d} :
\bb\l{eq:hjbi_residual}
\textrm{HJBI Residual}\coloneqq \|F(u^k)\|^2_{0,\Om,\textrm{val}}+\|u^k-g\|^2_{3/2,\p\Om,\textrm{val}},
\ee
which will be evaluated similar to \eqref{eq:J_d} based on testing samples in $\Om$ and on $(\p\Om)^2$ of the same size $N_{\textrm{val}}=2000$. Figure \ref{fig:hjbi} presents  the decay of the residuals in terms of  the number of policy iterations, which  suggests the $H^2$-superlinear convergence of the iterates $\{u^k\}_{k=0}^\infty$  (the $H^2$-norms of the last iterates  for $v_s=0.5$ and $v_s=1.2$ are 20.3 and 31.8, respectively).
Note that  the parameter $\eta_k=1/k$, $k\in \N$ leads to a  slower and more oscillating convergence of the  iterates $\{u^k\}_{k=0}^\infty$, due to the fact that we apply a mini-batch SGD method to optimize the discrete cost functional $J_{k,d}$ for each policy iteration.  A faster and smoother convergence can be achieved by choosing a more rapidly decaying $\{\eta_k\}_{k=1}^\infty$.

\begin{figure}[!ht]
    \centering
    \includegraphics[width=0.43\columnwidth,height=6.2cm]{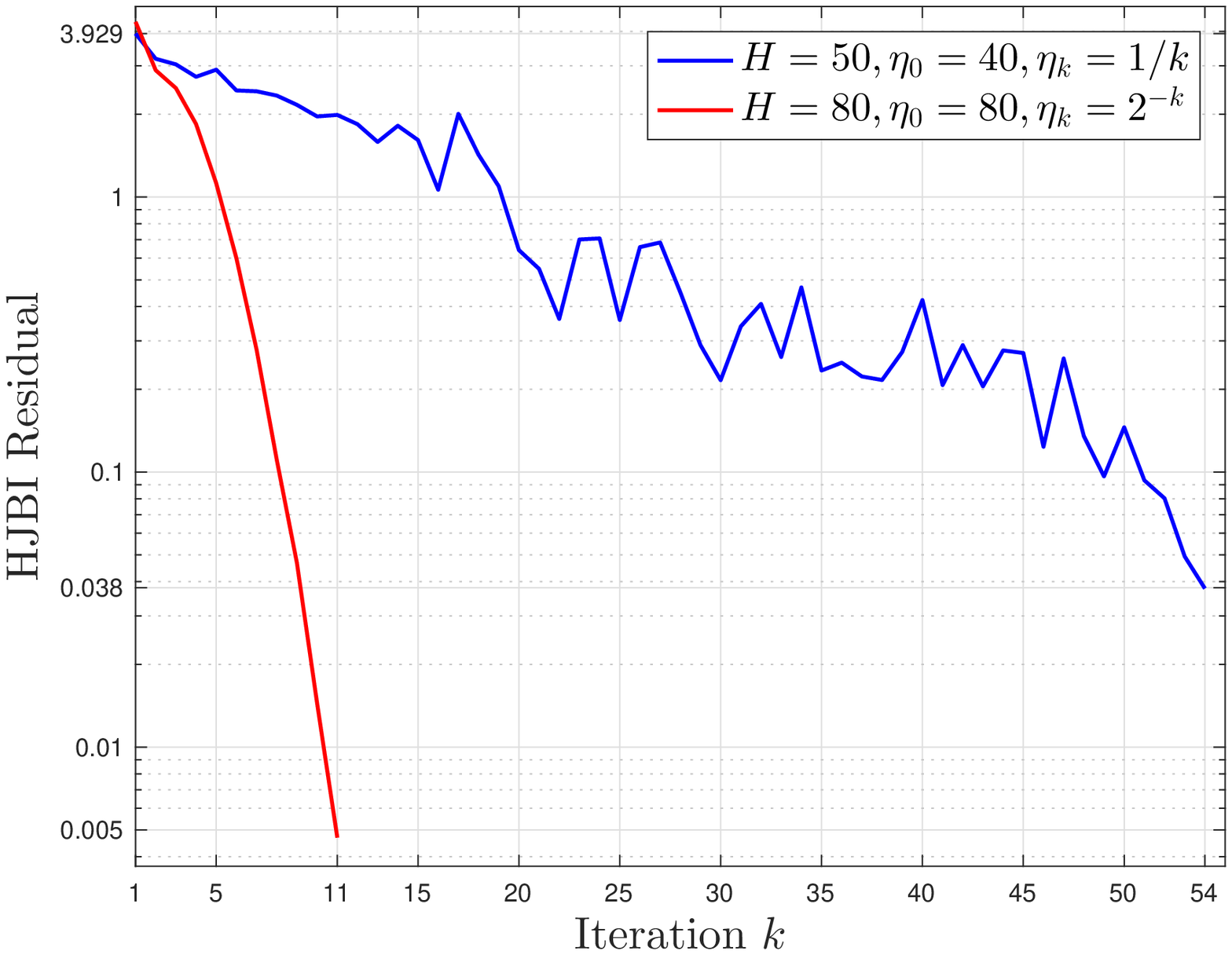}
    \includegraphics[width=0.43\columnwidth,height=6.2cm]{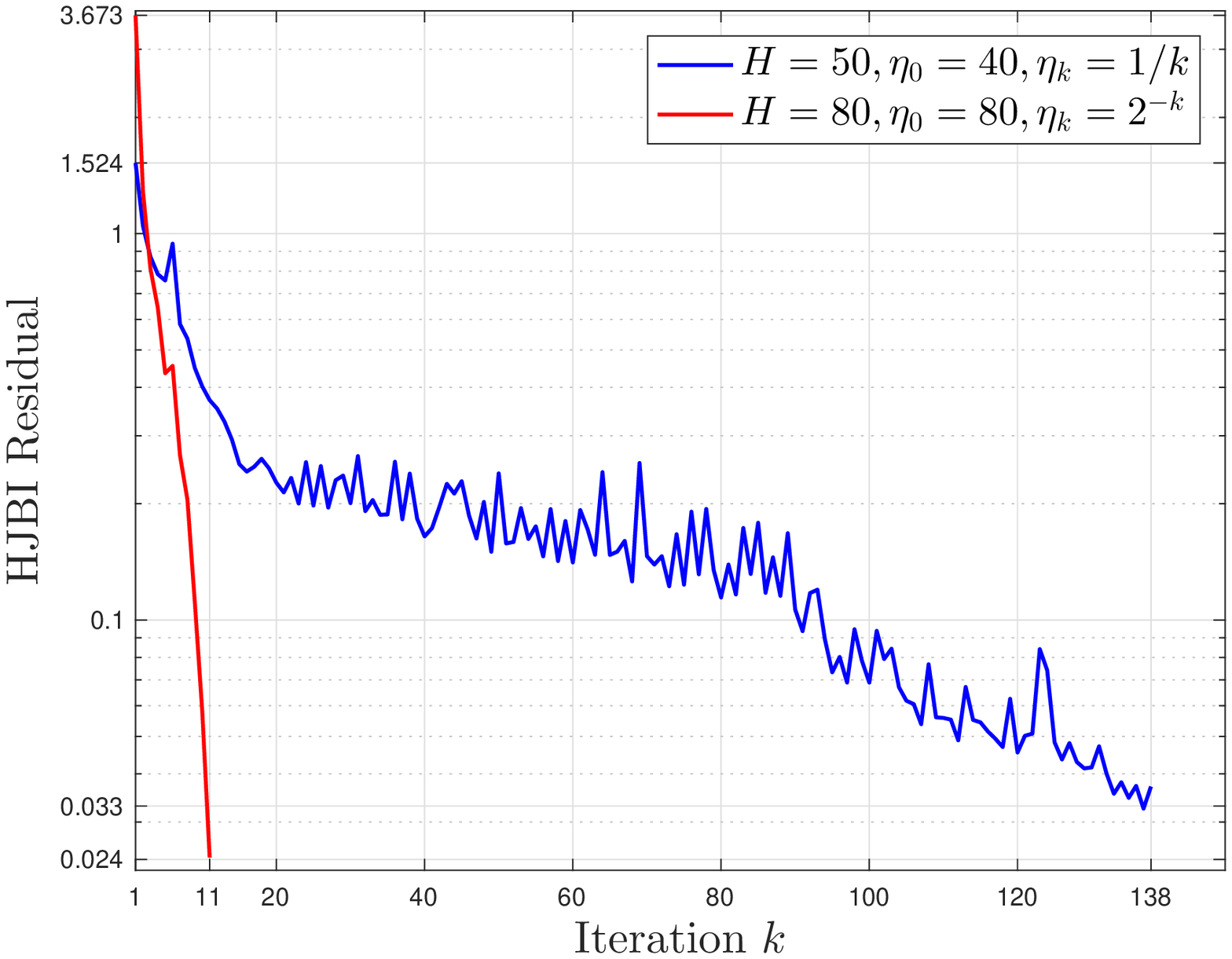}
    \caption{Residuals of the HJBI Dirichlet problems for the two different scenarios with respect to the number of policy iterations (plotted in a log scale);
    from left to right: the ship moves slower than  the wind ($v_s=0.5$) and faster than the wind ($v_s=1.2$).}
    \label{fig:hjbi}
 \end{figure}
 \bigskip

We further investigate the influence of the parameters $\{\eta_k\}_{k=1}^\infty$ on the accuracy and efficiency of Algorithm \ref{alg:dpi_d} in detail. Algorithm \ref{alg:dpi_d} is  carried out  with the same trial functions (7-layer networks with hidden width $H=80$ and complexity 32721) but different $\{\eta_k\}_{k=1}^\infty$ (we choose different $\eta_0$ to keep the quantity $\eta_0\eta_1$ constant), and  the numerical results are summarized in Table \ref{table:compare}. One can clearly see that a more rapidly decaying $\{\eta_k\}_{k=1}^\infty$ results in a better overall performance (in terms of accuracy and computational time), even though it requires more time to solve the linear equation for each policy iteration. The rapid decay of $\{\eta_k\}_{k=1}^\infty$ not only accelerates the superlinear convergence of the iterates $\{u^k\}_{k=1}^\infty$, but also helps to eliminate the oscillation caused by the randomness in the SGD algorithm (see Figure \ref{fig:hjbi}), which  enables us to achieve a higher accuracy with less total computational effort. However, we should keep in mind that smaller $\{\eta_k\}_{k=1}^\infty$ means that 
we need to solve  all linear equations with higher accuracy, which subsequently requires more complicated networks and more careful choices of the optimizers for $J_{k,d}$. Therefore, in general, we need to tune the balance between the superlinear convergence rate of policy iteration and the computational costs of the linear solvers, in order to achieve optimal  performance of the algorithm.

 \begin{table}[H]
 \renewcommand{\arraystretch}{1.05} 
\centering
\caption{Numerical results with different parameters $\{\eta_k\}_{k=0}^\infty$ for the scenario where the ship is slower than the wind ($v_s=0.5$).}
\label{table:compare}
\begin{tabular}[t]{@{}cccc c cccc@{}}
\toprule
 \multicolumn{4}{c}{$H=80, \eta_0=80,\eta_k=2^{-k}$} &  \phantom{a}&  
 \multicolumn{4}{c}{$H=80, \eta_0=40,\eta_k=1/k$} \\
 \cmidrule{1-4} \cmidrule{6-9}
 PI Itr & HJBI Residual & SGD Itr &  Run time & & PI Itr & HJBI Residual & SGD Itr & Run time\\ \midrule
 9 & 0.0466 & 11030 & 922s & 		&58 & 0.0252 & 32500  & 2729s \\ 
 10 & 0.0144 & 20330 & 1710s & 	&59 & 0.0201 & 39080  & 3275s \\ 
11 & 0.0046 & 45510 & 3820s &	&60 & 0.0156 & 45770 & 3836s\\ 
\bottomrule
\end{tabular}
\end{table}%

Finally,  we shall  compare the performance of 
Algorithm \ref{alg:dpi_d} (with $\eta_0=80,\eta_k=2^{-k}$) and 
the Direct Method (with $\|\cdot\|_{X,\p\Om,\textrm{tra}}=\|\cdot\|_{3/2,\p\Om,\textrm{tra}}$ in \eqref{eq:direct_X})
 by fixing the 
  trial functions (7-layer networks with hidden width $H=80$), 
  the training samples
and the learning rates of the SGD algorithms.
For both methods, we shall consider the following squared residual
for each  iterate $\hat{u}_i$ obtained from the $i$-th SGD iteration
(see \eqref{eq:hjbi_residual}):
$$
\textrm{HJBI Residual}\coloneqq \|F(\hat{u}_i)\|^2_{0,\Om,\textrm{val}}+\|\hat{u}_i-g\|^2_{3/2,\p\Om,\textrm{val}}.
$$
Figure \ref{fig:direct_pi_slow} (left) presents the decay of the residuals as the number of SGD iterations tends to infinity, which demonstrates 
the efficiency improvement of Algorithm \ref{alg:dpi_d} over the Direct Method.
The superlinear convergence of policy iteration helps to provide 
 better initial guesses of the SGD algorithm, 
which leads to 
a more rapidly decaying loss curve with smaller noise (on the validation samples); 
the HJBI residuals obtained  in the last 1000 SGD iterations of Algorithm \ref{alg:dpi_d} (resp.~the Direct Method)
oscillates around the value 0.0028 (resp.~0.0144) with a standard derivation 0.00096 (resp.~0.0093).
\color{black}

\begin{figure}[!ht]
    \centering
    \includegraphics[width=0.43\columnwidth,height=6.2cm]{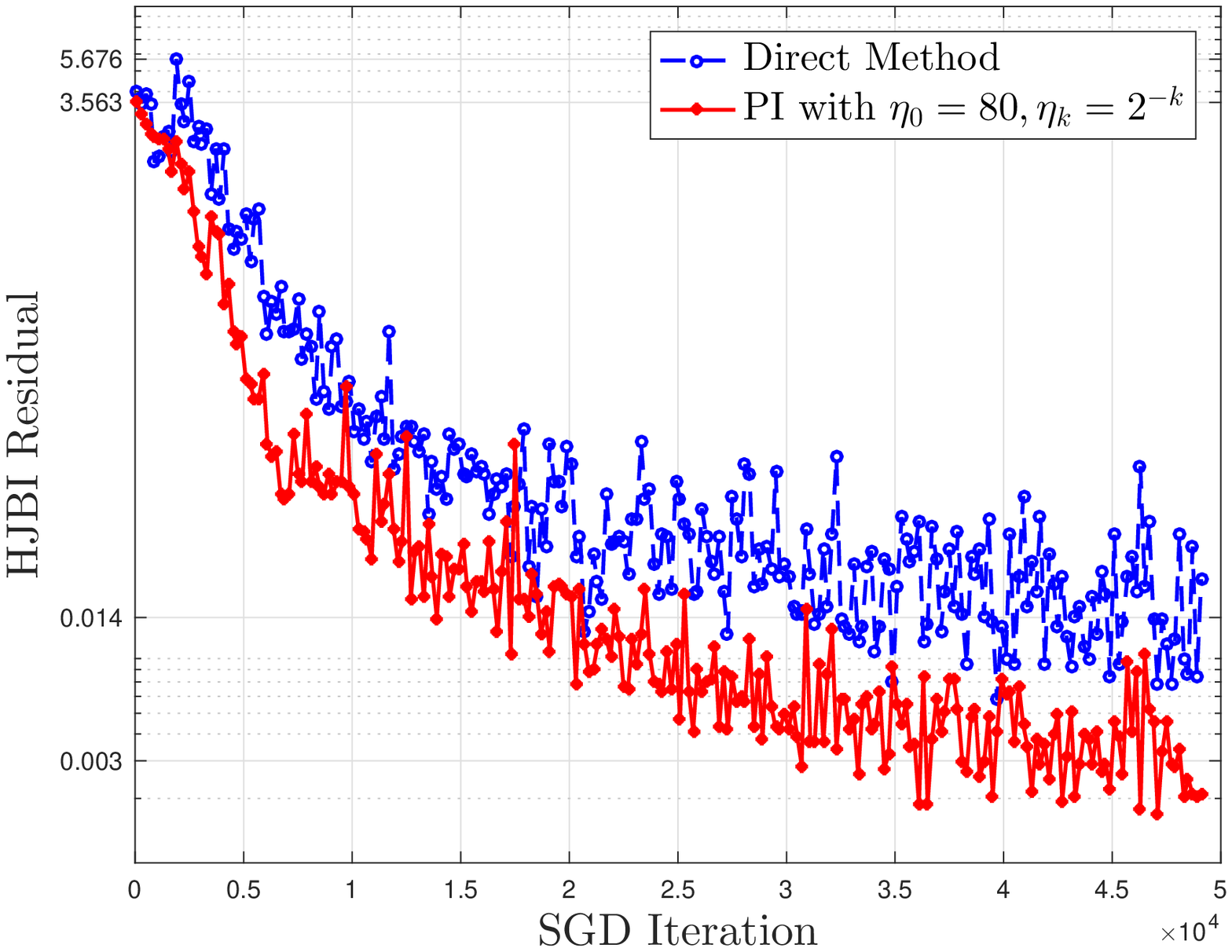}
    \includegraphics[width=0.43\columnwidth,height=6.2cm]{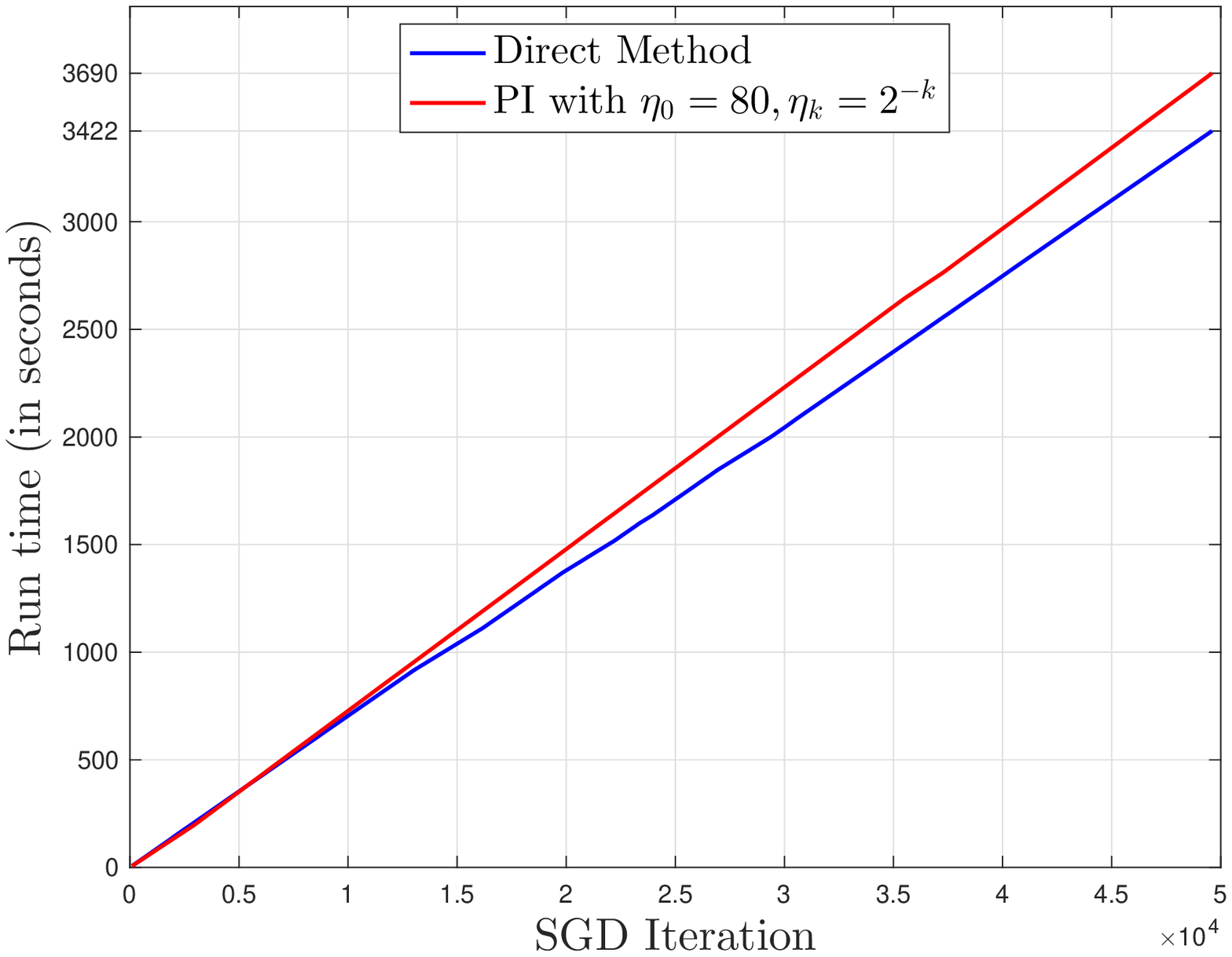}
    \caption{ Performance comparison  of the Direct Method and Algorithm \ref{alg:dpi_d}
    for the scenario where the ship is slower than the wind ($v_s=0.5$);
    from left to right:   residuals (plotted in a log scale) and  overall runtime for all SGD iterations.}
    \label{fig:direct_pi_slow}
 \end{figure}

\section{Conclusions}
This paper develops a neural network based policy iteration algorithm for solving HJBI boundary value problems arising in stochastic differential games  of diffusion processes with controlled drift and state constraints. 
We establish the $q$-superlinear convergence of the algorithm  in $H^2(\Om)$ with an arbitrary initial guess, and also the pointwise (almost everywhere)  convergence of the numerical solutions and their (first and second order) derivatives, which  subsequently leads to convergent approximations of optimal feedback controls. 
{The convergence results also hold for  
general trial functions, including
 kernel functions  and high-order separable polynomials      used in global spectral methods.}
Numerical examples for stochastic Zermelo navigation problems are presented to illustrate the theoretical findings.

To the best of our knowledge, this is the first paper which demonstrates the global superlinear convergence of policy iteration for  nonconvex HJBI equations in function spaces, and proposes convergent neural network based numerical methods for solving the  solutions of nonlinear boundary value problems and their  derivatives. 
 Natural next steps would be to extend the inexact policy iteration algorithm to parabolic HJBI equations, and to employ neural networks with tailored architectures to  enhance the efficiency of the algorithm for solving high-dimensional problems.

\appendix
\section{Some fundamental results}


Here, we collect some well-known results which are used frequently in the paper.

We start with the well-posedness of strong solutions to   Dirichlet boundary value problems. In the sequel, we shall denote by $\tau$ the trace operator.

\begin{Theorem}(\cite[Theorem 1.2.19]{garroni2002})\l{thm:D_regularity}
Let $\Om$ be  a bounded $C^{1,1}$ domain.
 Suppose that for all  $1\le i,j\le n$, $a^{ij}$ is in $C(\bar{\Om})$, and $b^i,c$ are in $L^\infty(\Om)$, satisfying  $c\ge 0$ and 
 \bb\l{eq:elliptic}
 \sum_{i,j=1}^na^{ij}(x)\xi_i\xi_j\ge \lambda|\xi|^2, \q \textnormal{for all $\xi\in \R^n$ and for almost every $x\in {\Om}$,}
 \ee
for some constant $\lambda>0$.
Then for every $f\in L^2(\Om)$ and  $g\in H^{3/2}(\p\Om)$, there exists a unique strong solution $u\in H^{2}(\Om)$ to the Dirichlet problem
$$
-a^{ij}\p_{ij}u+b^i\p_i u+cu=f, \; \textnormal{in $\Om$}; \q \tau u = g, \; \textnormal{on $\p\Om$},
$$
and the following estimate holds with a  constant $C$ independent of $f$ and $g$:
$$
\|u\|_{H^2(\Om)}\le C\big(\|f\|_{L^2(\Om)}+\|g\|_{H^{3/2}(\p\Om)}\big).
$$
\end{Theorem}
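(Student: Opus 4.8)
The plan is to run the classical Calderón--Zygmund / method-of-continuity argument for non-divergence form elliptic operators with continuous principal coefficients on $C^{1,1}$ domains. First I would reduce to homogeneous boundary data: since $\Om$ is a bounded $C^{1,1}$ domain, $H^{3/2}(\p\Om)$ is exactly the trace space of $H^2(\Om)$, so the trace operator $\tau$ admits a bounded right inverse, yielding $G\in H^2(\Om)$ with $\tau G=g$ and $\|G\|_{H^2(\Om)}\le C\|g\|_{H^{3/2}(\p\Om)}$. Setting $v\coloneqq u-G$, the problem becomes $-a^{ij}\p_{ij}v+b^i\p_i v+cv=\tilde f$ in $\Om$ and $\tau v=0$ on $\p\Om$, with $\tilde f\coloneqq f-(-a^{ij}\p_{ij}G+b^i\p_i G+cG)\in L^2(\Om)$ and $\|\tilde f\|_{L^2(\Om)}\le C(\|f\|_{L^2(\Om)}+\|g\|_{H^{3/2}(\p\Om)})$. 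It therefore suffices to show that $\cL\colon v\mapsto -a^{ij}\p_{ij}v+b^i\p_i v+cv$ is an isomorphism from $H^2(\Om)\cap H^1_0(\Om)$ onto $L^2(\Om)$, together with the corresponding norm bound.

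The core ingredient is the global $W^{2,2}$ a priori estimate $\|v\|_{H^2(\Om)}\le C\big(\|\cL_0 v\|_{L^2(\Om)}+\|v\|_{L^2(\Om)}\big)$ for $v\in H^2(\Om)\cap H^1_0(\Om)$, where $\cL_0\coloneqq -a^{ij}\p_{ij}$ is the principal part; this follows from the interior and boundary Calderón--Zygmund estimates, using that $\bar\Om$ is compact so the $a^{ij}$ are uniformly continuous, and that $\p\Om$ is $C^{1,1}$ (via a partition of unity, flattening the boundary chart by chart, and freezing coefficients). Since the lower-order part $v\mapsto b^i\p_i v+cv$ maps $H^2(\Om)$ into $L^2(\Om)$ through the compact embedding $H^2(\Om)\hookrightarrow H^1(\Om)$, and by the interpolation inequality $\|v\|_{H^1(\Om)}\le \eps\|v\|_{H^2(\Om)}+C_\eps\|v\|_{L^2(\Om)}$, the same estimate persists with $\cL_0$ replaced by the full operator $\cL$. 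I would then apply the method of continuity along $\cL_t\coloneqq (1-t)(-\Delta)+t\cL$, $t\in[0,1]$: each $\cL_t$ has continuous principal coefficients, is uniformly elliptic with ellipticity constant depending only on $\min(1,\lambda)$, and has zero-order coefficient $\ge 0$, so the a priori estimate holds for every $\cL_t$ uniformly in $t$; since $-\Delta\colon H^2(\Om)\cap H^1_0(\Om)\to L^2(\Om)$ is an isomorphism, it follows that $\cL_1=\cL$ is too, and the open mapping theorem supplies a bound on $\cL^{-1}$, which unwinds to the asserted estimate on $u$.

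The step I expect to be the main obstacle is upgrading $\|v\|_{H^2}\le C(\|\cL_t v\|_{L^2}+\|v\|_{L^2})$ to an estimate \emph{without} the term $\|v\|_{L^2}$, uniformly in $t$ --- this is precisely where the sign condition $c\ge 0$ is used and where the low integrability ($p=2<n$ when $n\ge 3$) creates friction. The clean tool is the Aleksandrov--Bakelman--Pucci maximum principle, $\esssup_\Om|v|\le C\|\cL_t v\|_{L^n(\Om)}$ for $v\in H^2(\Om)\cap H^1_0(\Om)$ with $c\ge 0$. For $n\le 2$ this immediately controls $\|v\|_{L^2}$ by $\|\cL_t v\|_{L^2}$; for $n\ge 3$ one first runs a short bootstrap, using $\nabla v\in H^1(\Om)\hookrightarrow L^{2n/(n-2)}(\Om)$ to gain integrability of $\cL_0 v$ and iterating the $W^{2,p}$ estimates until $p\ge n$, so that $v$ is continuous and ABP applies; alternatively one first establishes solvability for right-hand sides in $L^p$ with $p\ge n$ and then extends to $L^2$-data by density together with a duality argument for the $L^2$-bound. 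Once this absorption is in place the remaining assembly is routine, and uniqueness in particular also drops out directly by applying ABP with right-hand side zero.
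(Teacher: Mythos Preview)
The paper does not prove this theorem: it is stated in the appendix as a known result, cited from \cite[Theorem~1.2.19]{garroni2002}, and used as a black box throughout. There is therefore no proof in the paper to compare your proposal against.

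Your outline is the standard Calder\'on--Zygmund/method-of-continuity route and is essentially sound. One refinement at the step you correctly flag as the obstacle: the bootstrap you describe to reach $L^n$ and invoke ABP does not work for the \emph{a priori estimate} itself when $n\ge 3$, because if $\cL_t v=f$ with $f$ merely in $L^2(\Om)$ then the identity $-a^{ij}\p_{ij}v=f-(b^i\p_i v+cv)$ cannot gain integrability beyond that of $f$, regardless of how regular the lower-order terms become. What does work is to use the bootstrap only for \emph{uniqueness}: if $\cL v=0$ with $v\in H^2(\Om)\cap H^1_0(\Om)$, then the right-hand side vanishes and the lower-order terms lie in $L^{2n/(n-2)}$ by Sobolev embedding, so the $W^{2,p}$ estimate for the principal part lifts $v$ to $W^{2,2n/(n-2)}$; iterating reaches $W^{2,p}$ with $p\ge n$, where ABP (or the strong maximum principle for $W^{2,n}_{\mathrm{loc}}$ solutions) forces $v=0$. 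With uniqueness in hand, either a compactness/contradiction argument or, equivalently, the Fredholm alternative (noting that $\cL$ is a relatively compact perturbation of the isomorphism $-\Delta+\mu$ for large $\mu$, hence of index zero) removes the $\|v\|_{L^2(\Om)}$ term and yields surjectivity, and the method of continuity is then not needed at all.
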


The next theorem shows the well-posedness of oblique  boundary value problems. 
\begin{Theorem}(\cite[Theorem 1.2.20]{garroni2002})\l{thm:N_regularity}
Let $\Om$ be  a bounded  $C^{1,1}$ domain.
Suppose that for all  $1\le i,j\le n$, $a^{ij}$ is in $C(\bar{\Om})$, and $b^i,c$ are in $L^\infty(\Om)$, satisfying $c\ge 0$ and the uniform elliptic condition \eqref{eq:elliptic}  for some  constant  $\lambda>0$.

Assume in addition, that $\{\gamma^j\}_{j=0}^n\subseteq C^{0,1}(\p\Om)$, $\gamma^0\ge 0$ on $\p\Om$, $\esssup_\Om c+\max_{\p\Om}\gamma^0>0$, and  $\sum_{j=1}^n\gamma^j\nu_j\ge \mu$ on $\p\Om$ for some constant $\mu>0$,  where $\{\nu_j\}_{j=1}^n$ are the components of the unit outer normal vector field on $\p\Om$.
Then for every $f\in L^2(\Om)$ and  $g\in H^{1/2}(\p\Om)$, there exists a unique strong solution $u\in H^{2}(\Om)$ to the following oblique   derivative problem:
$$
-a^{ij}\p_{ij}u+b^i\p_i u+cu=f, \; \textnormal{in $\Om$}; \q \gamma^j\tau(\p_ju)+\gamma^0\tau u = g, \; \textnormal{on $\p\Om$},
$$
and the following estimate holds with a  constant $C$ independent of $f$ and $g$:
$$
\|u\|_{H^2(\Om)}\le C\bigg(\|f\|_{L^2(\Om)}+\|g\|_{H^{1/2}(\p\Om)}\bigg).
$$
\end{Theorem}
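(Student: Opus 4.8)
The plan is to treat $(L,B)$ with $Lu:=-a^{ij}\p_{ij}u+b^i\p_iu+cu$ and $Bu:=\gamma^j\tau(\p_ju)+\gamma^0\tau u$ as a bounded linear operator from $H^2(\Om)$ into $Z:=L^2(\Om)\t H^{1/2}(\p\Om)$, and to show it is a linear isomorphism with norm control; boundedness of $(L,B)$ is immediate from the ellipticity structure, the $L^\infty$-bounds on $\{b^i\},c$, the Lipschitz regularity of $\{\gamma^j\}$, and the continuity of the trace maps $H^2(\Om)\to H^{3/2}(\p\Om)$ and $H^1(\Om)\to H^{1/2}(\p\Om)$ on the $C^{1,1}$ boundary. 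The central analytic step, which I would carry out first, is the global a priori estimate
\[
\|u\|_{H^2(\Om)}\le C\big(\|Lu\|_{L^2(\Om)}+\|Bu\|_{H^{1/2}(\p\Om)}+\|u\|_{L^2(\Om)}\big),\qquad \fa u\in H^2(\Om),
\]
with $C$ depending only on $n,\lambda,\Om$, the modulus of continuity of the $a^{ij}$, and the coefficient norms. The interior contribution is the classical Calder\'on--Zygmund $W^{2,2}$ bound for elliptic operators with continuous leading part, obtained by freezing the $a^{ij}$, comparing with the constant-coefficient operator, and absorbing the oscillation over a fine covering on which the modulus of continuity of $a^{ij}$ is small. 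The boundary contribution is the genuinely oblique ingredient: after flattening $\p\Om$ by a $C^{1,1}$ chart and freezing coefficients, the hypothesis $\sum_j\gamma^j\nu_j\ge\mu>0$ is exactly what makes the first-order boundary operator $\gamma^j\p_j+\gamma^0$ satisfy the Lopatinski--Shapiro complementing condition uniformly along $\p\Om$, which licenses the Agmon--Douglis--Nirenberg boundary $W^{2,2}$ estimate; patching interior and boundary pieces by a partition of unity yields the display.

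Second, I would prove uniqueness of the homogeneous problem $Lu=0$, $Bu=0$ via the maximum principle. A solution $u\in H^2(\Om)$ first bootstraps: writing $-a^{ij}\p_{ij}u=-b^i\p_iu-cu$, the right-hand side lies in $L^{2^*}$ by Sobolev embedding of $\nabla u\in H^1$ and of $u$, so the $W^{2,p}$ version of the oblique estimate gives $u\in W^{2,2^*}$; iterating raises the integrability until $u\in W^{2,p}$ with $p>n$, hence $u\in C^1(\bar\Om)$. Now suppose $\sup_{\bar\Om}u=m>0$. Since $c\ge0$, the strong maximum principle (the Aleksandrov--Bakelman--Pucci estimate on the positivity set $\{u>0\}$, applicable as $u\in W^{2,p}$ with $p>n$) rules out an interior maximum unless $u\equiv m$; but $u\equiv m$ forces $cm=0$ a.e.\ and $\gamma^0 m=0$ on $\p\Om$, contradicting $\esssup_\Om c+\max_{\p\Om}\gamma^0>0$. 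Thus the maximum is attained at some $x_0\in\p\Om$, where Hopf's boundary point lemma makes the derivative of $u$ in any outward-pointing direction strictly positive; obliqueness says $(\gamma^1,\dots,\gamma^n)$ is outward-pointing at $x_0$, so $\gamma^j\p_ju(x_0)>0$, while $Bu=0$ gives $\gamma^j\p_ju(x_0)=-\gamma^0(x_0)m\le0$, a contradiction. Hence $u\le0$, and replacing $u$ by $-u$ gives $u\equiv0$.

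Third, I would remove the lower-order term and conclude. A standard compactness argument upgrades the first estimate: were the bound $\|u\|_{H^2(\Om)}\le C(\|Lu\|_{L^2(\Om)}+\|Bu\|_{H^{1/2}(\p\Om)})$ to fail, a sequence $u_k$ with $\|u_k\|_{H^2(\Om)}=1$ and $\|Lu_k\|_{L^2(\Om)}+\|Bu_k\|_{H^{1/2}(\p\Om)}\to0$ would, by the compact embedding $H^2(\Om)\hookrightarrow\hookrightarrow L^2(\Om)$ together with the first estimate, be Cauchy in $H^2(\Om)$, with limit a nonzero homogeneous solution, contradicting the uniqueness just shown. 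This sharp estimate makes $(L,B)$ injective with closed range, i.e.\ semi-Fredholm. For surjectivity I would use the method of continuity along $(L_t,B)$ with $L_t:=tL+(1-t)(-\Delta+\mathrm{Id})$: each $L_t$ obeys the same structural hypotheses uniformly (uniform ellipticity, zeroth-order coefficient $c_t=tc+(1-t)\ge0$, unchanged $B$), and each $(L_t,B)$ has trivial kernel by the same maximum principle (the nondegeneracy condition holds for every $t$), so the sharp a priori estimate holds uniformly in $t$; since the base operator $(-\Delta+\mathrm{Id},B)$ is invertible by the classical solvability of the oblique derivative problem for the Laplacian, the method of continuity transfers invertibility to $(L_1,B)=(L,B)$. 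Trivial kernel together with surjectivity yields unique solvability and the estimate $\|u\|_{H^2(\Om)}\le C(\|f\|_{L^2(\Om)}+\|g\|_{H^{1/2}(\p\Om)})$.

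The main obstacle is the boundary part of the first step: verifying that obliqueness produces the complementing condition uniformly and carrying the Agmon--Douglis--Nirenberg boundary $W^{2,2}$ estimate through for leading coefficients that are merely continuous on a $C^{1,1}$ domain, which forces the freezing-and-absorption scheme in place of a direct potential-theoretic computation. A secondary subtlety is ensuring a pointwise maximum principle is available for $H^2=W^{2,2}$ data in high dimensions, resolved here by the bootstrap to $W^{2,p}$ with $p>n$ that precedes the Aleksandrov--Bakelman--Pucci and Hopf arguments.
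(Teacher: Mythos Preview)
The paper does not prove this statement. It is stated in the appendix under ``Some fundamental results'' and attributed directly to \cite[Theorem 1.2.20]{garroni2002}; the authors simply quote it as a known regularity and well-posedness result for linear oblique derivative problems and invoke it as a black box (in Proposition~\ref{prop:wp_n}, in the discussion after Algorithm~\ref{alg:dpi_n}, and in Appendix~\ref{sec:proof_N}). So there is no ``paper's own proof'' to compare against.

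That said, your outline is a correct and standard route to such a theorem: (i) the global $H^2$ a priori estimate with an $L^2$ lower-order term, obtained by freezing coefficients and invoking the Agmon--Douglis--Nirenberg boundary estimate after checking that obliqueness $\sum_j\gamma^j\nu_j\ge\mu>0$ yields the complementing condition; (ii) uniqueness via a maximum-principle argument, preceded by a $W^{2,p}$ bootstrap so that the Aleksandrov--Bakelman--Pucci and Hopf boundary-point arguments apply pointwise; (iii) a compactness contradiction to drop the $\|u\|_{L^2}$ term, using the kernel triviality from (ii); and (iv) the method of continuity from $(-\Delta+\mathrm{Id},B)$ to $(L,B)$. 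The only place to be slightly careful is in (ii): the $W^{2,p}$ oblique estimate for $p>2$ requires the same structural hypotheses you already have (continuous leading part, $C^{1,1}$ boundary, Lipschitz $\gamma^j$), and the nondegeneracy condition $\esssup_\Om c+\max_{\p\Om}\gamma^0>0$ must be checked along the entire homotopy in (iv); you handle both, since $c_t=tc+(1-t)\ge 1-t>0$ for $t<1$ and the original hypothesis covers $t=1$. In short, your sketch is sound, but in the context of this paper no proof is expected---the result is cited, not proved.
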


We then recall several important measurability results. The  following  measurable selection theorem follows from Theorems 18.10 and 18.19 in \cite{aliprantis1999}, and ensures the existence of a measurable selector maximizing (or minimizing) a Carath\'{e}odory function.

\begin{Theorem}\l{thm:measurable_selection}
Let  $(S, \Sigma)$ a measurable space and $X$ be a separable  metrizable space. Let $\Gamma:S\rightrightarrows X$ be a  measurable set-valued mapping with nonempty compact values, and suppose $g: S \t X \to \R$ is a Carath\'{e}odory function. Define the value  function $m: S \to \R$ by $m(s)=\max_{x\in \Gamma(s)}g(s,x)$, and the set-valued map $\mu:S\rightrightarrows X$ by  $\mu(s)=\{x\in \Gamma(s)\mid g(s,x)=m(s)\}$.  
Then we have
\bn
\item The value function $m$ is measurable.
\item The set-valued mapping $\mu$ is measurable, has nonempty and compact values.  Moreover, there exists a measurable function $\psi:S\to X$ satisfying $\psi(s)\in \mu(s)$ for each $s\in S$.
\en
\end{Theorem}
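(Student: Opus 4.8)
The plan is to prove the three assertions in turn, relying on the standard machinery of measurable selection theory: the Castaing representation of a measurable set-valued map by countably many measurable selectors, the Weierstrass extreme value theorem, and the Kuratowski--Ryll-Nardzewski selection theorem.

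First I would establish the measurability of $m$. Since $\Gamma:S\rightrightarrows X$ is measurable with nonempty compact (hence closed) values and $X$ is separable metrizable, the Castaing representation yields a countable family of measurable selectors $\{f_k\}_{k\in\N}$, with $f_k:S\to X$ and $\Gamma(s)=\overline{\{f_k(s):k\in\N\}}$ for every $s\in S$. Because $g(s,\cdot)$ is continuous and the selectors are dense in $\Gamma(s)$, continuity forces $m(s)=\max_{x\in\Gamma(s)}g(s,x)=\sup_{k\in\N}g\big(s,f_k(s)\big)$. Each map $s\mapsto g(s,f_k(s))$ is measurable, being the composition of the Carath\'{e}odory function $g$ with the measurable selector $f_k$ (see Remark \ref{rmk:cara}), and a countable supremum of measurable real functions is measurable; hence $m$ is measurable.

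Next I would treat the pointwise structure of $\mu$. For fixed $s\in S$, the set $\Gamma(s)$ is nonempty and compact and $g(s,\cdot)$ is continuous, so by the Weierstrass extreme value theorem the maximum is attained, giving $\mu(s)\neq\emptyset$. Moreover $\mu(s)=\{x\in\Gamma(s):g(s,x)=m(s)\}$ is the intersection of $\Gamma(s)$ with the $g(s,\cdot)$-preimage of the single point $m(s)$, which is closed by continuity; being a closed subset of the compact set $\Gamma(s)$, it is compact.

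The main obstacle is the measurability of the argmax map $\mu$. Here I would show that the graph $\mathrm{Gr}(\mu)=\{(s,x)\in S\t X:x\in\Gamma(s),\ g(s,x)=m(s)\}$ belongs to the product $\sigma$-algebra $\Sigma\otimes\cB_X$: the graph of $\Gamma$ is measurable since $\Gamma$ is a measurable compact-valued map, the map $(s,x)\mapsto g(s,x)$ is jointly $\Sigma\otimes\cB_X$-measurable because a Carath\'{e}odory function on $S\t X$ with $X$ separable metrizable is jointly measurable, and $(s,x)\mapsto m(s)$ is measurable by the first step; thus the level condition $g(s,x)-m(s)=0$ carves out a measurable set, and $\mathrm{Gr}(\mu)$ is its intersection with the measurable graph of $\Gamma$. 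Equivalently, one may verify the criterion that $\{s:\mu(s)\cap U\neq\emptyset\}\in\Sigma$ for every open $U\subseteq X$ directly from the dense selectors $\{f_k\}$, which characterizes measurability for closed-valued maps into a separable metrizable space. Once $\mu$ is known to be a measurable set-valued map with nonempty compact values, the Kuratowski--Ryll-Nardzewski selection theorem supplies a measurable function $\psi:S\to X$ with $\psi(s)\in\mu(s)$ for all $s$, completing the proof.
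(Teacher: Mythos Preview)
The paper does not supply its own proof of this theorem: it appears in the appendix with the remark that it ``follows from Theorems 18.10 and 18.19 in \cite{aliprantis1999}'', and no further argument is given. Your sketch is precisely the standard proof underlying those results---Castaing representation for the measurability of $m$, Weierstrass for nonemptiness and compactness of $\mu(s)$, and a selection theorem for $\psi$---so there is no meaningful contrast to draw with the paper.

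Two minor technical points are worth tightening. Passing from measurability of $\mathrm{Gr}(\mu)$ in $\Sigma\otimes\cB_X$ to weak measurability of $\mu$ generally requires a projection theorem and hence completeness of $(S,\Sigma)$, which is not assumed here; the direct route is to observe that for each closed $F\subset X$ the set $\{s:\mu(s)\cap F\neq\emptyset\}$ coincides with $\{s:\Gamma(s)\cap F\neq\emptyset\text{ and }\max_{x\in\Gamma(s)\cap F}g(s,x)=m(s)\}$, which is measurable by applying your Castaing argument to the measurable compact-valued map $s\rightrightarrows\Gamma(s)\cap F$. Also, the Kuratowski--Ryll-Nardzewski theorem in its usual form asks for $X$ to be Polish rather than merely separable metrizable; this is harmless because the compact values of $\mu$ remain closed in the metric completion of $X$, where the theorem applies.
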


The following theorem  shows the $\argmax$ set-valued mapping is upper hemicontinuous.

\begin{Theorem}(\cite[Theorem 17.31]{aliprantis1999})\l{thm:uhc}
Let $X,Y$ be topological spaces,  $\Gamma\subset Y$ be a nonempty compact subset, and $g:X\t \Gamma\to \R$ be a  continuous function. Define the value function $m: X \to \R$ by $m(x)=\max_{y\in \Gamma}g(x,y)$, and the set-valued map $\mu:X\rightrightarrows Y$ by  $\mu(x)=\{y\in \Gamma\mid g(x,y)=m(x)\}$.  
Then $\mu$ has nonempty and compact values. Moreover, 
if $Y$ is Hausdorff, then $\mu$ is  upper hemicontinuous, i.e., for every $x\in X$ and  every neighborhood $U$ of $\mu(x)$, there is a neighborhood $V$ of $x$ such that $z\in V$ implies $\mu(z)\subset  U$. 
\end{Theorem}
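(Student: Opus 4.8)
The plan is to treat the two assertions separately: I would obtain the pointwise structure of $\mu(x)$ from the Weierstrass extreme value theorem, and establish upper hemicontinuity by a net/subnet compactness argument that carries the defining maximizing inequality to the limit.

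First I would fix $x\in X$ and observe that $g(x,\cdot):\Gamma\to\R$ is continuous on the nonempty compact set $\Gamma$; hence it attains its supremum, so $m(x)=\max_{y\in\Gamma}g(x,y)$ is finite and the maximizer set $\mu(x)$ is nonempty. Since $\mu(x)=\{y\in\Gamma \mid g(x,y)=m(x)\}$ is the preimage of the closed singleton $\{m(x)\}$ under the continuous map $g(x,\cdot)$, it is closed in $\Gamma$, and being a closed subset of the compact set $\Gamma$ it is compact. This settles nonempty- and compact-valuedness without using the Hausdorff hypothesis.

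For upper hemicontinuity I would argue directly from the definition by contradiction. Fix $x_0\in X$ and an open set $U\supseteq\mu(x_0)$, and suppose that no neighborhood $V$ of $x_0$ satisfies $\mu(V)\subseteq U$. Indexing by the neighborhood filter of $x_0$, I obtain a net $x_\alpha\to x_0$ together with points $y_\alpha\in\mu(x_\alpha)\setminus U$. Since $U$ is open, $\Gamma\setminus U$ is closed in $\Gamma$ and therefore compact; by the net characterization of compactness there is a subnet $y_\beta\to y^*$ with $y^*\in\Gamma\setminus U$, and along this subnet still $x_\beta\to x_0$. The key step is to pass the optimality inequality to the limit: for every fixed $y\in\Gamma$ we have $g(x_\beta,y_\beta)=m(x_\beta)\ge g(x_\beta,y)$, and letting $\beta$ run, joint continuity of $g$ at $(x_0,y^*)$ and at $(x_0,y)$ together with monotonicity of limits in $\R$ gives $g(x_0,y^*)\ge g(x_0,y)$. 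As $y\in\Gamma$ was arbitrary, $y^*$ maximizes $g(x_0,\cdot)$, i.e.\ $y^*\in\mu(x_0)\subseteq U$, contradicting $y^*\in\Gamma\setminus U$. Hence such a $V$ exists and $\mu$ is upper hemicontinuous.

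I expect the main obstacle to be the extraction and correct handling of the convergent subnet in the upper hemicontinuity step: compactness of the (constant) constraint set $\Gamma$ is exactly what guarantees a cluster point $y^*$, and the Hausdorff hypothesis on $Y$ is what I would invoke to ensure that $\Gamma$ is closed and that net limits are unambiguous, so that the limit genuinely lands in the closed set $\Gamma\setminus U$. The remaining parts are routine, since preserving the weak inequality $g(x_\beta,y_\beta)\ge g(x_\beta,y)$ in the limit follows immediately from continuity of $g$, and no separate proof of continuity of the value function $m$ is required, the maximizing inequality being carried through directly. Should a more structural route be preferred, I would instead first establish continuity of $m$ (lower semicontinuity from a fixed maximizer, upper semicontinuity by the same subnet argument) and then deduce that $\mu$ has closed graph, invoking the principle that a closed correspondence with values in a compact set is upper hemicontinuous; but the direct net proof above is more self-contained.
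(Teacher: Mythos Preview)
The paper does not provide its own proof of this theorem; it is quoted as a known result from \cite[Theorem~17.31]{aliprantis1999} in the appendix collecting background facts, and is used only as a black box in the proofs of Proposition~\ref{prop:F1} and Corollary~\ref{cor:dpi_ctrl}. Your argument is correct and is the standard proof of this special case of Berge's Maximum Theorem (constant constraint set): nonempty compact values follow from the extreme value theorem and the fact that a closed subset of a compact set is compact, while upper hemicontinuity is obtained by the usual net/subnet contradiction argument, passing the maximizing inequality $g(x_\beta,y_\beta)\ge g(x_\beta,y)$ to the limit via joint continuity of $g$. There is therefore nothing on the paper's side to compare against; your direct proof simply supplies what the paper takes for granted.
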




Finally, we present a  special case of \cite[Theorem 2]{dontchev2012}, which characterizes  $q$-superlinear convergence of quasi-Newton methods for a class of semismooth operator-valued equations.

\begin{Theorem}\l{thm:quasi-Newton}
Let  $Y,Z$ be two Banach spaces, and $F:Y\to Z$ be a given function with a zero $y^*\in Y$. Suppose there exists an open neighborhood $V$ of $y^*$ such that $F$ is semismooth with a generalized differential $\p^*F$  in $V$,  and 
there exists a constant $L>0$ such that
\begin{align*}
\|y - y^*\|_Y/L\le \|F(y) - F(y^*)\|_Z \le L\|y - y^*\|_Y, \q \fa y\in V.
\end{align*}
 For some starting point $y^0$ in $V$, let the sequence $\{y^k\}_{k\in \N}\subset V$ satisfy $y^k \not=y^*$ for all $k$, and be generated by the following quasi-Newton method:
$$ B_ks^k = -F(y^k),\q y^{k+1}=s^k+y^k,\q  k=0,1,\ldots$$
where $\{B_k\}_{k\in \N}$ is a sequence of bounded linear operators in $\cL(Y,Z)$. Let $\{A_k\}_{k\in \N}$ be a sequence of generalized differentials of $F$ such that  $A_k \in \p^*F(y^k)$ for all $k$, and let $E_k = B_k - A_k$. 
Then $y^k \to y^*$ $q$-superlinearly if and only if $\lim_{k\to\infty}y^k= y^*$ and $\lim_{k\to\infty}\|E_ks^k\|_Z/\|s^k\|_Y=0$.
 
\end{Theorem}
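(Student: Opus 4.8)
The plan is to recast the quasi-Newton recursion in the error variable $e^k := y^k - y^*$ and reduce the whole statement to a single master identity, from which both implications can be read off. Writing $s^k = y^{k+1}-y^k = e^{k+1}-e^k$ and using $F(y^*)=0$, I would first apply the $\p^*F$-semismoothness of $F$ at the base point $y^*$ to the displacement $e^k$; since $A_k\in\p^*F(y^k)=\p^*F(y^*+e^k)$ is exactly the forward point appearing in the definition of semismoothness, this gives
$$F(y^k)=A_ke^k+q_k,\qquad \|q_k\|_Z=o(\|e^k\|_Y).$$
Substituting the defining relation $B_ks^k=-F(y^k)$ with $B_k=A_k+E_k$ and collecting the $A_k$ terms yields the master identity
$$A_ke^{k+1}=-\,(q_k+E_ks^k).$$
Everything afterwards rests on this identity together with the strong metric subregularity and calmness encoded in the hypothesis $\|e^k\|_Y/L\le\|F(y^k)\|_Z\le L\|e^k\|_Y$.

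Next I would record the directional consequences of the two-sided bound: combining it with the expansion $F(y^k)=A_ke^k+q_k$ shows $\tfrac{1}{2L}\|e^k\|_Y\le\|A_ke^k\|_Z\le 2L\|e^k\|_Y$ for all large $k$, while the lower bound applied at $y^{k+1}$ gives $\|e^{k+1}\|_Y\le L\|F(y^{k+1})\|_Z$. The crux is then to upgrade these directional estimates to \emph{uniform} two-sided operator bounds $c\|v\|_Y\le\|A_kv\|_Z\le C\|v\|_Y$, valid for every $v$ and all large $k$; this is the role played by the metric regularity structure under semismoothness, and it is what replaces the continuity of $\p^*F$ that is unavailable in the nonsmooth setting.

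Granting those uniform bounds, both implications follow from the master identity. For the ``if'' direction I would assume $y^k\to y^*$ and $\omega_k:=\|E_ks^k\|_Z/\|s^k\|_Y\to 0$; then the identity and the lower operator bound give
$$\|e^{k+1}\|_Y\le c^{-1}\big(\|q_k\|_Z+\omega_k\|s^k\|_Y\big)\le c^{-1}\big(o(\|e^k\|_Y)+\omega_k(\|e^{k+1}\|_Y+\|e^k\|_Y)\big),$$
and absorbing the $\omega_k\|e^{k+1}\|_Y$ term on the left (legitimate once $\omega_k<c$) yields $\|e^{k+1}\|_Y=o(\|e^k\|_Y)$, i.e.\ $q$-superlinear convergence. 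For the ``only if'' direction, $q$-superlinear convergence forces $\|s^k\|_Y/\|e^k\|_Y\to 1$, so from $E_ks^k=-(q_k+A_ke^{k+1})$ and the upper operator bound I obtain $\|E_ks^k\|_Z\le\|q_k\|_Z+C\|e^{k+1}\|_Y=o(\|e^k\|_Y)=o(\|s^k\|_Y)$, which is precisely the asserted consistency condition.

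The hard part will be the passage from subregularity of $F$ at the single point $y^*$ to a uniform invertibility estimate for the generalized derivatives $A_k\in\p^*F(y^k)$ acting on the \emph{shifted} error $e^{k+1}$: because $\p^*F$ is set-valued and need not be continuous, $A_ke^{k+1}$ cannot be compared to $A_{k+1}e^{k+1}$, so one cannot argue as in the classical Dennis--Moré theorem and must instead exploit a uniform modulus in the semismoothness (first-order-approximation) estimate near $y^*$ to control $A_k$ in all directions. This is exactly the content of the general characterization in \cite{dontchev2012}, of which the present statement is a special case; in the applications of this paper the required uniform bounds are supplied directly by the a priori elliptic estimates of Theorem \ref{thm:D_regularity}.
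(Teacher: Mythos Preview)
The paper does not prove this theorem: it is stated in the appendix as a special case of \cite[Theorem~2]{dontchev2012} and is used as a black box. Your sketch therefore goes further than the paper does, and the master identity $A_ke^{k+1}=-(q_k+E_ks^k)$ together with the two directional bounds $\tfrac{1}{2L}\|e^k\|_Y\le\|A_ke^k\|_Z\le 2L\|e^k\|_Y$ is exactly the right reduction.

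The gap is the step you yourself flag as the ``crux'': upgrading the \emph{directional} estimates on $A_k$ to uniform two-sided operator bounds $c\|v\|_Y\le\|A_kv\|_Z\le C\|v\|_Y$ for \emph{all} $v$. This upgrade is not obtainable from the stated hypotheses. Strong subregularity of $F$ at $y^*$ combined with semismoothness yields only $\|A_ke^k\|_Z\asymp\|e^k\|_Y$, i.e.\ control of $A_k$ in the single direction $e^k$; nothing in the hypotheses prevents $A_k$ from being nearly singular (or unbounded) in the orthogonal direction $e^{k+1}$, which is precisely the direction appearing in your master identity. The same obstruction blocks the route via $\|e^{k+1}\|_Y\le L\|F(y^{k+1})\|_Z$: expanding $F(y^{k+1})$ one is left with a term $(A_{k+1}-A_k)e^{k+1}$ that cannot be controlled without continuity of $\p^*F$. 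So the argument, as written, does not close; the actual proof in \cite{dontchev2012} proceeds through the strong-subregularity framework for generalized equations and avoids the need for uniform invertibility of the $A_k$. Your closing remark is correct and is effectively what the paper does: in the concrete applications here the uniform bounds on $A_k$ and $A_k^{-1}$ are supplied externally by the elliptic estimates of Theorem~\ref{thm:D_regularity} (see the proofs of Theorems~\ref{thm:pi_superlinear} and \ref{thm:dpi_global}), so the delicate part of \cite{dontchev2012} is never actually needed in this paper.
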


\section{Proof of Theorem \ref{thm:dpi_global_n}}\l{sec:proof_N}

Let $u^0\in \cF$ be an arbitrary initial guess, we shall assume  without loss of generality that Algorithm  \ref{alg:dpi_n}  runs infinitely, i.e., $\|u^{k+1}-u^k\|_{H^2(\Om)}>0$ and $u^k\not =u^*$ for all $k\in \N\cup\{0\}$.

 We first show $\{u^k\}_{k\in\N}$  converges to the unique solution $u^*$ in $H^2(\Om)$. For each $k\ge 0$,  we can deduce from \eqref{eq:J_error_n} that there exists  $f^e_k\in L^2(\Om)$ and $g^e_k\in H^{1/2}(\p\Om)$ such that 
\bb\l{eq:linear_inexact_n}
L_ku^{k+1}-f_k=f^e_k, \q \textnormal{in $\Om$}; \q B u^{k+1}=g^e_k, \q \textnormal{on $\p\Om$},
\ee
and $\|f^e_k\|^2_{L^2(\Om)}+\|g^e_k\|^2_{H^{1/2}(\Om)}\le \eta_{k+1}(\|u^{k+1}-u^k\|^2_{H^2(\Om)})$ with $\lim_{k\to\infty}\eta_{k}= 0$. 
Then, we can proceed as in the proof of Theorem  \ref{thm:dpi_global}, and conclude that if $c\ge \ul{c}_0$ with a sufficiently large $\ul{c}_0$, then  $\{u^k\}_{k\in\N}$ converges  to the solution $u^*$ of \eqref{eq:N}.

The $q$-superlinear convergence of Algorithm \ref{alg:dpi_n} can then be deduced by interpreting the algorithm as a quasi-Newton method for the operator equation $\bar{F}(u)=0$, with the operator 
$\bar{F}: u\in H^2(\Om)\to (F(u),B u)\in Z$,
where we introduce the Banach space $Z\coloneqq L^2(\Om)\t H^{1/2}(\p\Om)$  with the usual product norm $\|z\|_Z\coloneqq \|z_1\|_{L^2(\Om)}+\|z_2\|_{H^{1/2}(\p\Om)}$ for each $z=(z_1,z_2)\in Z$. Since $B\in \cL(H^2(\Om),H^{1/2}(\p\Om))$, we can directly infer from Corollary \ref{cor:semismooth_F} that $\bar{F}:H^2(\Om)\to Z$ is semismooth in $H^2(\Om)$,  with a generalized differential  $M_k=(L_k, \gamma^i\tau( \p_i)+\gamma^0\tau)\in \p^*\tilde{F}(u^k)\subset \cL(H^2(\Om),Z)$ for all $k\in \N\cup\{0\}$. Then, for each $k\ge 0$, by following the same arguments as in Theorem \ref{thm:dpi_global},  we can construct a perturbed operator $\delta M_k\in \cL(H^2(\Om),Z)$, such that  \eqref{eq:linear_inexact_n} can be  equivalently written as $(M_k+\delta M_k)s_k=-\bar{F}(u^k)$ with $s_k=u^{k+1}-u^k$, and $\|\delta M_ks_k\|/\|s^k\|_{H^2(\Om)}\le \sqrt{2\eta_0\eta_{k+1}}\to 0$, as $k\to\infty$. Finally, the regularity theory of elliptic oblique   derivative problems (see Theorem \ref{thm:N_regularity}) shows that $M_k$ is nonsingular for each $k$, and  $\|M_k^{-1}\|_{\cL(Z,H^2(\Om))}\le C$ for some constant $C$ independent of $k$. Hence we can verify that 
there exists a neighborhood $V$ of $u^*$ and a constant $L>0$, such that 
$$\|u - u^*\|_{H^2(\Om)}/L\le \|\bar{F}(u) - \bar{F}(u^*)\|_Z \le L\|u - u^*\|_{H^2(\Om)},\q \fa u\in V,$$
which allows us to conclude from Theorem  \ref{thm:quasi-Newton} the $q$-superlinear convergence of $\{u^k\}_{k\in \N}$.


\end{document}